\numberwithin{equation}{section}
\theoremstyle{plain}
	\newtheorem{theorem}{Theorem}[section]
	\newtheorem{lemma}[theorem]{Lemma}
	\newtheorem{proposition}[theorem]{Proposition}
	\newtheorem{corollary}[theorem]{Corollary}
\theoremstyle{definition}
	\newtheorem{definition}[theorem]{Definition}
	\newtheorem{remark}[theorem]{Remark}
	\newtheorem{open.problem}[theorem]{Open Problem}
\newcommand{\N}{\mathbb{N}}
\newcommand{\R}{\mathbb{R}}
\newcommand{\eps}{\varepsilon}
\newcommand{\M}{\mathscr{M}}
\newcommand{\Haus}[1]{\mathscr{H}^{#1}} 
\newcommand{\Leb}[1]{\mathscr{L}^{#1}} 
\newcommand{\redb}{\mathscr{F}} 
\newcommand{\closure}[2][3]{{}\mkern#1mu\overline{\mkern-#1mu#2}}
\newcommand{\weakto}{\rightharpoonup}
\renewcommand{\phi}{\varphi}
\renewcommand{\rho}{\varrho}
\renewcommand{\theta}{\vartheta}
\renewcommand{\div}{\mathrm{div}} 
\DeclareMathOperator{\loc}{loc}
\DeclareMathOperator{\NL}{NL}
\DeclareMathOperator{\Capa}{Cap}
\DeclareMathOperator{\BMO}{BMO}
\DeclareMathOperator{\Lip}{Lip}
\DeclarePairedDelimiter{\scalar}{<}{>}                                     
\DeclarePairedDelimiter{\set}{\{}{\}}
\mathchardef\ordinarycolon\mathcode`\:
\def\Xint#1{\mathchoice
{\XXint\displaystyle\textstyle{#1}}%
{\XXint\textstyle\scriptstyle{#1}}%
{\XXint\scriptstyle\scriptscriptstyle{#1}}%
{\XXint\scriptscriptstyle\scriptscriptstyle{#1}}%
\!\int}
\def\XXint#1#2#3{{\setbox0=\hbox{$#1{#2#3}{\int}$ }
\vcenter{\hbox{$#2#3$ }}\kern-.6\wd0}}
\def\aint{\Xint-}
\begin{document}

\title[Leibniz rules and Gauss--Green formulas in fractional spaces]{Leibniz rules and Gauss--Green formulas in distributional fractional spaces}

\author[G.~E.~Comi]{Giovanni E. Comi}
\address[G.~E.~Comi]{Dipartimento di Matematica, Universit\`a di Pisa, Largo Bruno Pontecorvo 5, 56127 Pisa, Italy}
\email{giovanni.comi@dm.unipi.it}

\author[G.~Stefani]{Giorgio Stefani}
\address[G.~Stefani]{Department Mathematik und Informatik, Universit\"at Basel, Spiegelgasse 1, CH-4051 Basel, Switzerland}
\email{giorgio.stefani.math@gmail.com}

\thanks{\textit{Acknowledgments}.
The authors are members of INdAM--GNAMPA.
The first author is partially supported by the PRIN 2017 Project \textit{Variational methods for stationary and evolution problems with singularities and interfaces} (n.\ prot.\ 2017BTM7SN).   
The second author is partially supported by the ERC Starting Grant 676675 FLIRT -- \textit{Fluid Flows and Irregular Transport} and by the INdAM--GNAMPA Project 2020 \textit{Problemi isoperimetrici con anisotropie} (n.\ prot.\ U-UFMBAZ-2020-000798 15-04-2020).
The second author acknowledges the Department of Mathematics of the University of Pisa (Italy), where part of this work was completed, for its support and the warm hospitality.
The authors wish to thank the anonymous referee for their comments.}

\keywords{Fractional gradient, fractional divergence, Besov space, fractional variation, fractional Leibniz rule, Kenig--Ponce--Vega inequality, fractional Gauss--Green formula, fractional partial differential operator, fractional boundary-value problem, energy estimates.}

\subjclass[2020]{Primary 46E35. Secondary 26B20.}

\date{\today}

\begin{abstract}
We apply the results established in~\cite{CSS21} to prove some new fractional Leibniz rules involving $BV^{\alpha,p}$ and $S^{\alpha,p}$ functions, following the distributional approach adopted in the previous works~\cites{BCCS20,CS19,CS19-2}. 
In order to achieve our main results, we revise the elementary properties of the fractional operators involved in the framework of Besov spaces and we rephraze the Kenig--Ponce--Vega Leibniz-type rule in our fractional context. We apply our results to prove the well-posedness of the boundary-value problem for a general $2\alpha$-order fractional elliptic operator in divergence form.
\end{abstract}

\maketitle

\tableofcontents

\section{Introduction}

\subsection{The fractional framework}

Given a parameter $\alpha\in(0,1)$, the \emph{fractional $\alpha$-gradient} and the \emph{fractional $\alpha$-divergence}
are respectively the operators defined as
\begin{equation}
\label{eqi:def_frac_nabla}
\nabla^\alpha f(x)
=
\mu_{n,\alpha}
\int_{\R^n}\frac{(y-x)(f(y)-f(x))}{|y-x|^{n+\alpha+1}}\,dy,
\quad
x\in\R^n,	
\end{equation}
and
\begin{equation}
\label{eqi:def_frac_div}
\div^\alpha\phi(x)
=
\mu_{n,\alpha}
\int_{\R^n}\frac{(y-x)\cdot(\phi(y)-\phi(x))}{|y-x|^{n+\alpha+1}}\,dy,
\quad
x\in\R^n,	
\end{equation}
where $\mu_{n,\alpha}$ is a suitable renormalizing constant depending on $n$ and $\alpha$ only.

Since its first appearance~\cite{H59}, the literature around the operator $\nabla^\alpha$ has been quickly expanding in various directions, such as the study of PDEs~\cites{lo2021class, rodrigues2019nonlocal,rodrigues2019nonlocal-corr,SS15,SS18,SSS15,SSS18} and of functionals~\cites{BCM20,BCM21,KS21} involving fractional operators, the discovery of new optimal embedding inequalities~\cites{SSS17,S19,S20-An} and the development of a distributional and asymptotic analysis in this fractional framework~\cites{BCCS20,CS19,CS19-2,Sil19,CSS21}.
We also refer the reader to the survey~\cite{S20-New} and to the monograph~\cite{P16}.

At least for sufficiently smooth functions, the operators $\nabla^\alpha$ and $\div^\alpha$ are \emph{dual}, in the sense that
\begin{equation}
\label{eqi:ibp}
\int_{\R^n}f\,\div^\alpha\phi \,dx=-\int_{\R^n}\phi\cdot\nabla^\alpha f\,dx.
\end{equation}
Equality~\eqref{eqi:ibp} can be regarded as a fractional integration-by-parts formula and provides  the starting point for the distributional theory developed in the previous papers~\cites{BCCS20,CS19,CS19-2,CSS21}.

By imitating the classical definition of $BV$ function, for a given exponent $p\in[1,+\infty]$, we say that a function $f\in L^p(\R^n)$ belongs to the space $BV^{\alpha,p}(\R^n)$ if
\begin{equation}
\label{eqi:def_frac_var_p}
|D^\alpha f|(\R^n)
=
\sup\set*{\int_{\R^n}f\,\div^\alpha\phi\,dx : \phi\in C^\infty_c(\R^n;\R^n),\ \|\phi\|_{L^\infty(\R^n;\,\R^n)}\le1}
<+\infty.
\end{equation}	
In the case $p=1$, we simply write $BV^{\alpha,1}(\R^n)=BV^\alpha(\R^n)$.
The resulting linear space 
\begin{equation*}
BV^{\alpha,p}(\R^n)=
\set*{f\in L^p(\R^n) : |D^\alpha f|(\R^n)<+\infty}
\end{equation*}
endowed with the norm
\begin{equation*}
\|f\|_{BV^{\alpha,p}(\R^n)}
=
\|f\|_{L^p(\R^n)}+|D^\alpha f|(\R^n),
\quad
f\in BV^{\alpha,p}(\R^n),
\end{equation*}
is a Banach space and the fractional variation defined in~\eqref{eqi:def_frac_var_p} is lower semicontinuous with respect to the $L^p$-convergence. The fractional variation was considered by the authors in the work~\cite{CS19} in the \emph{geometric} framework $p=1$, also in connection with the naturally associated notion of \emph{fractional Caccioppoli perimeter}.
The fractional variation of an $L^p$ function for an arbitrary exponent $p\in[1,+\infty]$ was then explored in~\cites{CS19-2,CSS21}.

By instead emulating the usual definition of Sobolev function, we say that a function $f\in L^p(\R^n)$ belongs to the space $S^{\alpha,p}(\R^n)$ if there exists a function $\nabla^\alpha f\in L^p(\R^n;\R^n)$ satisfying~\eqref{eqi:ibp} for all $\phi\in C^\infty_c(\R^n;\R^n)$.
The resulting linear space
\begin{equation*}
S^{\alpha,p}(\R^n)
=
\set*{f\in L^p(\R^n): \nabla^\alpha f\in L^p(\R^n;\R^n)}
\end{equation*}
endowed with the norm
\begin{equation*}
\|f\|_{S^{\alpha,p}(\R^n)}
=
\|f\|_{L^p(\R^n)}
+
\|\nabla^\alpha f\|_{L^p(\R^n;\,\R^n)},
\quad
f\in S^{\alpha,p}(\R^n),
\end{equation*}  
is a Banach space as well and, actually, can be identified with the \emph{Bessel potential space} $L^{\alpha,p}(\R^n)$ whenever $p\in(1,+\infty)$, see~\cites{BCCS20,KS21}. 

\subsection{The fractional Leibniz rule: local and non-local parts}

Playing with its definition~\eqref{eqi:def_frac_nabla} (see~\cite{CS19}*{Lemmas~2.6 and~2.7} and~\cite{CS19-2}*{Lemmas~2.4 and~2.5} for the rigorous statements), one immediately sees that  the fractional gradient $\nabla^\alpha$ obeys the Leibniz-type rule
\begin{equation}
\label{eqi:leibniz_frac_nabla}
\nabla^\alpha(fg)
=
g\,\nabla^\alpha f
+
f\,\nabla^\alpha g
+
\nabla^\alpha_{\NL}(f,g)
\end{equation}  
at least for $f,g\in C^\infty_c(\R^n)$.
The remainder term $\nabla^\alpha_{\NL}(f,g)$ is the \emph{non-local fractional gradient} of the couple $(f,g)$ and is defined as
\begin{equation}
\label{eqi:def_frac_nabla_NL}
\nabla^\alpha_{\NL} (f,g)(x)
=
\mu_{n,\alpha}
\int_{\R^n}\frac{(y-x)(f(y)-f(x))(g(y)-g(x))}{|y-x|^{n+\alpha+1}}\,dy,
\quad
x\in\R^n.	
\end{equation}
In contrast with the more familiar local part given by $g\,\nabla^\alpha f+f\,\nabla^\alpha g$, the operator in~\eqref{eqi:def_frac_nabla_NL} is reminiscent of the intrinsic non-local nature of the fractional gradient~$\nabla^\alpha$.
An analogous Leibniz-type rule can be obtained for the fractional divergence operator~\eqref{eqi:def_frac_div}, whose reminder term $\div^\alpha_{\NL}$ is the \emph{non-local fractional divergence} and is defined similarly to~\eqref{eqi:def_frac_nabla_NL}.

When dealing with less regular functions, there are two main obstacles for the validity of~\eqref{eqi:leibniz_frac_nabla} naturally arising from its local and non-local parts respectively.

On the one side, in analogy with what already happens in the classical framework, one expects that the local term $f\,\nabla^\alpha g+g\,\nabla^\alpha f$ should be replaced by the sum of two distributionally defined objects.
In particular, if $f$ and $g$ have finite fractional variation~\eqref{eqi:def_frac_var_p}, then  it is natural to replace the fractional gradients $\nabla^\alpha f,\nabla^\alpha g\in L^1(\R^n;\R^n)$ with the fractional variation measures $D^\alpha f,D^\alpha g\in\M(\R^n;\R^n)$, but one has to check that the resulting measures $g\,D^\alpha f$ and $f\,D^\alpha g$ are well defined.

On the other side, one has to deal with the well-posedness and the integrability properties of (the extension of) the bilinear operator~\eqref{eqi:def_frac_nabla_NL} appearing as a remainder in the Leibniz rule~\eqref{eqi:leibniz_frac_nabla}.
In particular, the elementary estimate
\begin{equation}
\label{eqi:trivial_bound_NL}
|\nabla^\alpha_{\NL}(f,g)(x)|
\le
\mu_{n,\alpha}
\int_{\R^n}
\frac{|f(y)-f(x)|\,|g(y)-g(x)|}{|y-x|^{n+\alpha}}\,dy,
\quad
x\in\R^n,
\end{equation}
suggests that one needs to take the interplay between the two fractional regularities of the functions~$f$ and~$g$ into consideration.

Having in mind the development of the PDE theory~\cites{lo2021class,rodrigues2019nonlocal,rodrigues2019nonlocal-corr, SS15,SS18,SSS15,SSS18} and the study of functionals~\cites{BCM20,BCM21,KS21} in this fractional setting, as well as the large collection of Leibniz-type rules for fractional operators~\cites{BMMN14,BL14,CW91,d2019short,d2020correction,G12,GMN14,GK96,GO14,KP88,MS13,OW20,T06}, the main aim of the present paper is to move a first step towards the solution of these two problems, providing new Leibniz rules for the pairing of functions having finite fractional variation as well as for Bessel functions. As an application of our extended Leibniz rules, we show the well-posedness of a general class of $2\alpha$-order fractional elliptic boundary-value problems.

\subsection{Dealing with the local part}

To treat the issues coming from the local part of the fractional Leibniz rule~\eqref{eqi:leibniz_frac_nabla}, we take inspiration from the classical integer case~$\alpha=1$.

As a matter of fact, it is well known that, if $f,g\in BV^{1, \infty}(\R^n)$, then $fg\in BV^{1, \infty}(\R^n)$ with 
\begin{equation}
\label{eqi:leibniz_BV}
D(fg)
=
g^\star\,Df
+
f^\star\,Dg
\quad
\text{in}\
\M(\R^n;\R^n),
\end{equation}
where $BV^{1, \infty}(\R^n) = \set*{f\in L^{\infty}(\R^n) : |D f|(\R^n)<+\infty}$.
Here and in the following, for a function $f\in L^1_{\loc}(\R^n)$, we let 
\begin{equation}
\label{eqi:def_prec_rep}
f^\star(x)
=
\lim_{r\to0^+}
\aint_{B_r(x)}
f(y)\,dy
\end{equation} 
for all $x\in\R^n$ such that the limit~\eqref{eqi:def_prec_rep} exists and $f^\star(x)=0$ otherwise. 
The function $f^\star$ defined by~\eqref{eqi:def_prec_rep} is the so-called \emph{precise representative} of the function~$f$.
If $f\in BV_{\rm loc}(\R^n)$, then the limit in \eqref{eqi:def_prec_rep} exists for $\Haus{n-1}$-a.e.\ $x \in \R^n$ and it can be strengthened as
\begin{equation*}
\lim_{r\to0^+}
\aint_{B_r(x)}
|f(y)-f^\star(x)|^{\frac n{n-1}}\,dy=0
\end{equation*}
for $\Haus{n-1}$-a.e.\ $x\in\R^n\setminus J_f$, see~\cite{EG15}*{Section~5.9} for example, where $J_f\subset\R^n$ is the so-called \emph{jump set} of the function $f\in BV(\R^n)$ (if $f\in W^{1,1}(\R^n)$, then $J_f$ is empty).
Since $|Df|\ll\Haus{n-1}$ for all $f\in BV_{\rm loc}(\R^n)$, the Leibniz rule in~\eqref{eqi:leibniz_BV} is well posed. 

We would like to emphasize that the role of the precise representative is obviously relevant uniquely in the presence of a measure.
In fact, for Sobolev functions, it is much easier to see that, if $f,g\in W^{1,p}(\R^n)\cap L^\infty(\R^n)$ for some $p\in[1,+\infty]$, then $fg\in W^{1,p}(\R^n)\cap L^{\infty}(\R^n)$ with
\begin{equation*}
\nabla(fg)
=
f\,\nabla g
+
g\,\nabla f
\quad
\text{in}\
L^p(\R^n;\R^n).
\end{equation*}
A similar result clearly holds also for $f\in W^{1,p}(\R^n)$ and $g\in W^{1,q}(\R^n)$ with $p,q\in[1,+\infty]$ such that $\frac1p+\frac1q\le1$, in which case $fg \in W^{1,r}(\R^n)$, with $r \in [1, +\infty]$ such that $\frac{1}{r} = \frac{1}{p} + \frac{1}{q}$.

Reasoning by analogy, we will have to face the issue linked to the well-posedness of the local part of the fractional Leibniz rule~\eqref{eqi:leibniz_frac_nabla} only when dealing with functions with $BV^{\alpha,p}$ regularity. 
The difficulties related to  $S^{\alpha,p}$ functions, instead, will depend uniquely on the non-local part of the formula.

In order to control the continuity properties of the fractional variation with respect to the Hausdorff measure, we exploit~\cite{CSS21}*{Theorem~1.1}, which states that, if $f\in BV^{\alpha,p}(\R^n)$, then
\begin{equation}
\label{eqi:abs_frac_var}
|D^\alpha f|
\ll
\begin{cases}
\Haus{n-1}
&
\text{if}\
p\in\left[1,\frac n{1-\alpha}\right),
\\[3mm]
\Haus{n-\alpha-\frac np}
&
\text{if}\
p\in\left[\frac n{1-\alpha},+\infty\right].
\end{cases}
\end{equation}
Consequently, the product $g^\star\,D^\alpha f$ is well posed as soon as $g^\star$ is well defined as the limit in \eqref{eqi:def_prec_rep} in term of a Hausdorff measure of dimension not larger than the ones appearing in~\eqref{eqi:abs_frac_var}.
In particular, the function~$g$ is allowed to belong to some Besov space with sufficiently large regularity order.
However, at least at the present stage of the development of the theory, we are not able to consider functions~$g$ with finite fractional variation since, in virtue of~\cite{CSS21}*{Theorem~1.2}, $g^\star(x)$ would be defined only for $\Haus{n-\alpha+\eps}$-a.e.\ $x\in\R^n$, for any $\eps>0$ arbitrarily small.

\subsection{Dealing with the non-local part}

To overcome the issue related to the non-local part of the fractional Leibniz rule~\eqref{eqi:leibniz_frac_nabla}, we need to understand the behavior of the non-local fractional gradient~\eqref{eqi:def_frac_nabla_NL}.

A first step in this direction was already made in~\cite{CS19}*{Lemma~2.6} and~\cite{CS19-2}*{Lemma~2.4} by the authors, who proved that 
\begin{equation}
\label{eqi:cs_frac_nabla_NL_W}
\|\nabla^\alpha_{\NL}(f,g)\|_{L^1(\R^n;\,\R^n)}
\le
\begin{cases}
\mu_{n,\alpha}
\,[f]_{W^{\frac\alpha p,p}(\R^n)}
\,
[g]_{W^{\frac\alpha q,q}(\R^n)}
\\[2mm]
2\mu_{n,\alpha}
\,
\|f\|_{L^\infty(\R^n)}
\,
[g]_{W^{\alpha,1}(\R^n)}
\\[2mm]
2\mu_{n,\alpha}
\,
[f]_{W^{\alpha,1}(\R^n)}
\,
\|g\|_{L^\infty(\R^n)}
\end{cases}
\end{equation}
whenever $p,q\in(1,+\infty)$ are such that $\frac1p+\frac1q=1$.
In addition, from the computations made in~\cite{KS21}*{Equation~(2.11)}, one can also deduce the bound
\begin{equation}
\label{eqi:ks_frac_nabla_NL_Lp}
\|\nabla^\alpha_{\NL}(f,g)\|_{L^p(\R^n;\,\R^n)}
\le
\begin{cases}
2 \mu_{n,\alpha}
\,[f]_{B^\alpha_{\infty,1}(\R^n)}
\,
\|g\|_{L^p(\R^n)}
\\[2mm]
2 \mu_{n,\alpha}
\,
\|f\|_{L^p(\R^n)}
\,
[g]_{B^\alpha_{\infty,1}(\R^n)}
\end{cases}
\end{equation}
for all $p\in[1,+\infty]$.

In view of the simple bound~\eqref{eqi:trivial_bound_NL},  inequalities~\eqref{eqi:cs_frac_nabla_NL_W} and~\eqref{eqi:ks_frac_nabla_NL_Lp} suggest that one should study the integrability properties of the operator
\begin{equation*}
\mathcal D^\alpha_{\NL}(f,g)(x)
=
\int_{\R^n}
\frac{|f(y)-f(x)|\,|g(y)-g(x)|}{|y-x|^{n+\alpha}}\,dy,
\quad
x\in\R^n,
\end{equation*}  
in the general framework of Besov spaces.
A simple application of Minkowski's integral inequality (see for instance \cite{G14-C}*{Exercise 1.1.6}) and then H\"older's inequality easily shows that
\begin{equation}
\label{eqi:frac_nabla_NL_Besov}	
\|
\mathcal D^\alpha_{\NL}(f,g)
\|_{L^r(\R^n)}
\le
\begin{cases}
[f]_{B^{\beta}_{p,s}(\R^n)}
\,
[g]_{B^{\gamma}_{q,t}(\R^n)}
\\[2mm]
2
\,
\|f\|_{L^p(\R^n)}
\,
[g]_{B^\alpha_{q,1}(\R^n)}
\\[2mm]
2
\,
[f]_{B^\alpha_{p,1}(\R^n)}	
\,
\|g\|_{L^q(\R^n)}
\end{cases}
\end{equation}   
whenever $\beta,\gamma\in(0,1)$ are such that $\alpha=\beta+\gamma$ and $p,q,r,s,t\in[1,+\infty]$ are such that $\frac1p+\frac1q=\frac1r$ and $\frac1s+\frac1t=1$, including~\eqref{eqi:cs_frac_nabla_NL_W} and~\eqref{eqi:ks_frac_nabla_NL_Lp} as particular cases. 
Note that one can argue in a similar way for the fractional gradient $\nabla^\alpha$ in~\eqref{eqi:def_frac_nabla}, since the related operator
\begin{equation*}
\mathcal D^\alpha f(x)
=
\int_{\R^n}\frac{|f(y)-f(x)|}{|y-x|^{n+\alpha}}\,dy,
\quad
x\in\R^n,
\end{equation*}
satisfies
\begin{equation}
\label{eqi:frac_nabla_Besov}
\|\mathcal D^\alpha f\|_{L^p(\R^n)}
\le
[f]_{B^\alpha_{p,1}(\R^n)}
\end{equation}
for all $p\in[1,+\infty]$, again by Minkowski's integral inequality.

On the one side, the elementary bounds~\eqref{eqi:frac_nabla_NL_Besov} and~\eqref{eqi:frac_nabla_Besov} can be taken as a new starting point for the well-posedness of the fractional theory related to the operators~$\nabla^\alpha$ and~$\nabla^\alpha_{\NL}$ (and their companions $\div^\alpha$ and~$\div^\alpha_{\NL}$) in the setting of Besov spaces.
This point of view has the advantage of simplifying the exposition of the theory, providing a quicker and more agile way to prove several preliminary results.

On the other side, inequalities~\eqref{eqi:frac_nabla_NL_Besov} and~\eqref{eqi:frac_nabla_Besov} automatically extend the validity of the fractional Leibniz rule~\eqref{eqi:leibniz_frac_nabla} to Besov functions, solving the well-posedness issue related to the non-local part of the formula as soon as at least one of the two functions belongs to some Besov space.

Although easy, the bound~\eqref{eqi:trivial_bound_NL} is rough, since moving the absolute value inside the integral sign rules most of the cancellations originally allowed by the operator in~\eqref{eqi:def_frac_nabla_NL}. 
Having this observation in mind, one is tempted to look for a different approach exploiting the effect of the cancellations inside the integral~\eqref{eqi:def_frac_nabla_NL}.
Going back to formula~\eqref{eqi:leibniz_frac_nabla}, one can na\"ively reverse the roles of the involved operators and rewrite~\eqref{eqi:def_frac_nabla_NL} as
\begin{equation*}
\nabla^\alpha_{\NL}(f,g)
=
\nabla^\alpha(fg)
-g\,\nabla^\alpha f
-f\,\nabla^\alpha g.
\end{equation*}
At this point, one exploits the relation $\nabla^\alpha=R(-\Delta)^{\frac\alpha2}$, where $R$ is the \emph{Riesz transform} and 
$
(-\Delta)^{\frac\alpha2}
$ is the \emph{fractional Laplacian} (see \cref{subsec:notation} below for the precise definitions), so that
\begin{equation}
\label{eqi:frac_nabla_NL_rewrite}
\begin{split}
\nabla^\alpha_{\NL}(f,g)
&=
R\big(H_\alpha(f,g)\big)
+
[R,g](-\Delta)^{\frac\alpha2}f
+
[R,f](-\Delta)^{\frac\alpha2}g,
\end{split}
\end{equation} 
where 
\begin{equation}
\label{eqi:def_H_alpha}
H_\alpha(f,g)
=
(-\Delta)^{\frac\alpha2}(fg)
-g\,
(-\Delta)^{\frac\alpha2}f
-f\,
(-\Delta)^{\frac\alpha2}g
\end{equation}
and
\begin{equation}
\label{eqi:Riesz_commutator}
[R,f]g
=
R(fg)-gRf.
\end{equation} 
With this new definition at disposal, the integrability properties of the non-local fractional gradient~\eqref{eqi:def_frac_nabla_NL}  reduce to the integrability properties of the operator $H_\alpha$ in~\eqref{eqi:def_H_alpha} and the mapping properties of~$R$ and of its commutator~\eqref{eqi:Riesz_commutator}. 

On the one side, the integrability properties of the operator $H_\alpha$ date back to the fundamental work~\cite{KPV93} by Kenig--Ponce--Vega, where the authors proved that
\begin{equation}
\label{eqi:kpv}
\|H_\alpha(f,g)\|_{L^p(\R^n)}
\le
c_{n,\alpha,p}
\,
\big(
\|(-\Delta)^{\frac\alpha2}f\|_{L^p(\R^n)}
\,
\|g\|_{L^\infty(\R^n)}
+
\|f\|_{L^\infty(\R^n)}
\,
\|(-\Delta)^{\frac\alpha2}g\|_{L^p(\R^n)}
\big)
\end{equation}  
for all $f,g\in C^\infty_c(\R^n)$ and $p\in(1,+\infty)$.
The estimate in~\eqref{eqi:kpv} is often called \emph{fractional Leibniz rule} and was originally motivated by the study of the Korteweg--de Vries equation.
Although not fundamental for our purposes, it is worth to mention that the above bound can be strengthened by replacing the  $L^\infty$ norms appearing in the right-hand side with $\BMO$ seminorms, see~\cite{LS20}*{Theorem~7.1}.
Fractional Leibniz-type rules like~\eqref{eqi:kpv} have been extensively investigated in the last thirty years, see~\cites{BMMN14,BL14,CW91,d2019short,d2020correction,G12,GMN14,GK96,GO14,KP88,OW20} and the monographs~\cites{MS13,T06} for most relevant results and applications in this direction.

On the other side, the mapping properties of the commutator~\eqref{eqi:Riesz_commutator} were studied by Coifman--Rochberg--Weiss in~\cite{CRW76}, where they proved the inequality
\begin{equation}
\label{eqi:crw}
\|[R,f]g\|_{L^p(\R^n;\,\R^n)}
\le 
c_{n,p}
\,
\|f\|_{L^\infty(\R^n)}
\,
\|g\|_{L^p(\R^n)}
\end{equation}
for all $f,g\in C^\infty_c(\R^n)$ and $p\in(1,+\infty)$.
Again, although not essential for our scopes, the $L^\infty$ norm in the right-hand side of the above inequality can be replaced with the $\BMO$ seminorm, see~\cite{CRW76}*{Theorem~I} and also~\cite{LS20}*{Theorem~4.1}.

By combining~\eqref{eqi:frac_nabla_NL_rewrite} with the estimates~\eqref{eqi:kpv} and~\eqref{eqi:crw} and recalling the $L^p$-bounded\-ness of the Riesz transform for $p\in(1,+\infty)$, we conclude that
\begin{equation}
\label{eqi:frac_nabla_NL_Lp_bound}
\|\nabla^\alpha_{\NL}(f,g)\|_{L^p(\R^n;\,\R^n)}
\le
c_{n,\alpha,p}
\big(
\|\nabla^\alpha f\|_{L^p(\R^n;\,\R^n)}\,\|g\|_{L^\infty(\R^n)}
+
\|f\|_{L^\infty(\R^n)}\,\|\nabla^\alpha g\|_{L^p(\R^n;\,\R^n)}
\big)
\end{equation}
for all $f,g\in C^\infty_c(\R^n)$.
Inequality~\eqref{eqi:frac_nabla_NL_Lp_bound} allows to extend the fractional Leibniz rule~\eqref{eqi:leibniz_frac_nabla} to functions $f,g\in S^{\alpha,p}(\R^n)\cap L^\infty(\R^n)$ whenever $p\in(1,+\infty)$ by a standard approximation argument.
However, since one cannot directly pass to the limit in the expression~\eqref{eqi:def_frac_nabla_NL}, the non-local fractional gradient has to be understood in a proper weak sense via a non-local analogue of the fractional integration-by-parts formula~\eqref{eqi:ibp}, i.e.\ 
\begin{equation}
\label{eqi:ibp_NL}
\int_{\R^n}\phi\cdot\nabla^\alpha_{\NL}(f,g)\,dx
=
\int_{\R^n}f\,\div^\alpha_{\NL}(g,\phi)
\,dx
\end{equation} 
for all functions $f,g$ and vector-fields $\phi$ in suitable Besov spaces, once again thanks to the bound~\eqref{eqi:frac_nabla_NL_Besov}.

Somehow in analogy with the failure of the $L^1$-boundedness of the Riesz transform, the inequalities~\eqref{eqi:kpv} and~\eqref{eqi:crw} are not known in the boundary case \mbox{$p=1$} (and are not even expected to hold, see the discussion at the beginning of~\cite{LS20}*{Section~9}).
For this reason, we do not know if the non-local fractional gradient~\eqref{eqi:def_frac_nabla_NL} is well defined --- even in the weak sense, via~\eqref{eqi:ibp_NL} --- for $f,g\in S^{\alpha,1}(\R^n)\cap L^\infty(\R^n)$.
Thus, at the present moment, we are not able to extend the above approach to the case $p=1$.

\subsection{Main results}
We can now state the three main theorems concerning fractional Leibniz rules we are going to prove in the present paper.

Our first main result deals with the product of a $BV^{\alpha,p}$ function with a Besov function with suitable regularity depending on the exponent $p\in[1,+\infty]$. 

\begin{theorem}[Leibniz rule for $BV^{\alpha,p}$ with Besov]
\label{resi:leibniz_BV_alpha_p}
Let $\alpha\in(0,1)$ and let $p,q\in[1,+\infty]$ be such that $\frac1p+\frac1q=1$.
If $f\in BV^{\alpha,p}(\R^n)$ and
\begin{equation}
\label{eqi:cases_BV_alpha_p}
g\in
\begin{cases}
B^\alpha_{q,1}(\R^n)
&
\text{for}\
p\in\left[1,\frac n{n-\alpha}\right),
\\[2mm]
L^\infty(\R^n)\cap B^\alpha_{q,1}(\R^n)
&
\text{for}\
p\in\left[\frac n{n-\alpha},\frac1{1-\alpha}\right),
\\[2mm]
L^\infty(\R^n)\cap B^\beta_{q,1}(\R^n)\
\text{with}\ \beta\in\left(\beta_{n,p,\alpha},1\right)
&
\text{for}\
p\in\left[\frac1{1-\alpha},+\infty\right),
\\[2mm]
L^\infty(\R^n)\cap W^{\alpha,1}(\R^n)
&
\text{for}\
p=+\infty,
\end{cases}
\end{equation}
where
\begin{equation*}
\beta_{n,p,\alpha} 
= 
\begin{cases} 1 - \frac{1}{p} 
& 
\text{ if } n \ge 2\
\text{and}\
p\in\left[\frac 1{1-\alpha},\frac n{1-\alpha}\right), 
\\[4mm]
\left(1 - \frac{1}{p} \right)
\left(\alpha+\frac np\right)
& 
\text{ if } p\in\left[\frac n{1-\alpha},+\infty\right),
\end{cases}
\end{equation*}
then $fg\in BV^{\alpha,r}(\R^n)$ for all $r \in [1, p]$ with
\begin{equation*} 
D^{\alpha}(fg) 
= 
g^\star  D^{\alpha} f 
+ 
f\,
\nabla^{\alpha}g\,
\Leb{n} 
+ 
\nabla^{\alpha}_{\rm NL}(f,g)\,\Leb{n}
\quad 
\text{in}\ \M (\R^n; \R^{n}).
\end{equation*}
In addition,
\begin{equation*}
D^\alpha(fg)(\R^n)
=
\int_{\R^n}\nabla^\alpha_{\rm NL}(f,g)\,dx=0,
\end{equation*}
and 
\begin{equation*}
\int_{\R^n}
f\,\nabla^\alpha g\,dx
=
-
\int_{\R^n}
g^\star \,d D^\alpha f.
\end{equation*}
\end{theorem}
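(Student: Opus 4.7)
The plan is to prove the Leibniz formula by mollifying $f$ in the $BV^{\alpha,p}$ direction and passing to the limit in the distributional formulation, using the bilinear Besov estimate~\eqref{eqi:frac_nabla_NL_Besov} to control the non-local remainder and the absolute continuity~\eqref{eqi:abs_frac_var} to track the precise representative of~$g$. Before setting up the approximation I would first verify that each term on the right-hand side is meaningful. The measure $g^\star\,D^\alpha f$ is a well-defined finite $\R^n$-valued Radon measure: by~\eqref{eqi:abs_frac_var}, $|D^\alpha f|$ is absolutely continuous with respect to the Hausdorff measure of dimension $n-1$ or $n-\alpha-n/p$, and in each of the cases of~\eqref{eqi:cases_BV_alpha_p} the Besov regularity of~$g$ ensures, by the standard capacity/precise-representative theory for the Besov scale, that the limit~\eqref{eqi:def_prec_rep} exists (and is bounded) on a set whose complement has vanishing Hausdorff measure of that dimension. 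The product $f\,\nabla^\alpha g\in L^1$ follows from H\"older's inequality together with $\nabla^\alpha g \in L^q$, which is a consequence of the Besov regularity of~$g$ via~\eqref{eqi:frac_nabla_Besov}. The remainder $\nabla^\alpha_{\NL}(f,g)\in L^1$ is obtained from~\eqref{eqi:frac_nabla_NL_Besov} with $r=1$: for the first two ranges of~\eqref{eqi:cases_BV_alpha_p} the form $\|f\|_{L^p}\,[g]_{B^\alpha_{q,1}}$ suffices, while in the third range I would split $\alpha=\beta+\gamma$ with $\beta>\beta_{n,p,\alpha}$ and combine it with a continuous embedding of $BV^{\alpha,p}$ into a Besov space of order~$\gamma$ and compatible integrability.

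Next, set $f_k=f*\rho_k\in C^\infty(\R^n)\cap L^p(\R^n)$ for a standard symmetric mollifier~$\rho_k$. Since $\nabla^\alpha$ commutes with convolution one has $D^\alpha f_k=(D^\alpha f)*\rho_k$, with $|D^\alpha f_k|(\R^n)\le|D^\alpha f|(\R^n)$ and $D^\alpha f_k\weakstarto D^\alpha f$ in $\M(\R^n;\R^n)$. For the smooth functions~$f_k$ the pointwise Leibniz rule~\eqref{eqi:leibniz_frac_nabla} holds and, after pairing with a test vector field $\phi\in C^\infty_c(\R^n;\R^n)$ and using~\eqref{eqi:ibp}, yields
\begin{equation*}
-\int_{\R^n} f_k\,g\,\div^\alpha\phi\,dx
=\int_{\R^n}\phi\cdot g\,\nabla^\alpha f_k\,dx
+\int_{\R^n}\phi\cdot f_k\,\nabla^\alpha g\,dx
+\int_{\R^n}\phi\cdot\nabla^\alpha_{\NL}(f_k,g)\,dx.
\end{equation*}
The left-hand side converges to $-\int_{\R^n}fg\,\div^\alpha\phi\,dx$ because $f_k\to f$ in $L^p$ and $fg\in L^r$ for every $r\in[1,p]$ by H\"older and interpolation (using $g\in L^\infty\cap L^q$, the former from~\eqref{eqi:cases_BV_alpha_p} either directly or by the embedding $B^\alpha_{q,1}\hookrightarrow L^\infty$ when $\alpha q>n$). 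The first term on the right rewrites as $\int_{\R^n}[(\phi g)*\rho_k]\cdot dD^\alpha f$, and the crucial fact is that $(\phi g)*\rho_k(x)\to\phi(x)\,g^\star(x)$ for $|D^\alpha f|$-a.e.~$x$, which is exactly what~\eqref{eqi:abs_frac_var} and the precise representative set of $g$ provide; uniform boundedness (since $g\in L^\infty$ in the relevant cases) makes dominated convergence applicable and produces $\int_{\R^n}\phi\,g^\star\cdot dD^\alpha f$. The second term passes by $L^p\times L^q$ duality, and the third by bilinearity together with $\|\nabla^\alpha_{\NL}(f_k-f,g)\|_{L^1}\to 0$ from~\eqref{eqi:frac_nabla_NL_Besov}. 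Collecting the limits identifies $D^\alpha(fg)$ as the asserted sum and, together with the $L^r$-bound on $fg$, yields $fg\in BV^{\alpha,r}(\R^n)$ for all $r\in[1,p]$.

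Finally, the identity $\int_{\R^n}\nabla^\alpha_{\NL}(f,g)\,dx=0$ is a direct antisymmetry computation: swapping $x$ and $y$ in the double integral~\eqref{eqi:def_frac_nabla_NL} flips the sign of the factor $y-x$ and leaves the other factors invariant, so the integral equals its own negative. For $D^\alpha(fg)(\R^n)=0$ I would use that, for any smooth function $u$ with $u,\nabla^\alpha u\in L^1$, the identity $\int_{\R^n}\nabla^\alpha u\,dx=0$ holds via the Fourier representation $\nabla^\alpha=R(-\Delta)^{\frac\alpha2}$ (the multiplier $i\xi|\xi|^{\alpha-1}$ vanishes at the origin); mollifying $fg\in BV^\alpha$ and exploiting that total mass is preserved under convolution of a measure with $\rho_k$ then transfers the identity to $D^\alpha(fg)$. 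Combining the two zero-mass statements with the distributional Leibniz rule yields the remaining identity $\int_{\R^n}f\,\nabla^\alpha g\,dx=-\int_{\R^n}g^\star\,dD^\alpha f$. The main technical obstacle sits in the third range of~\eqref{eqi:cases_BV_alpha_p}: here $g$ has only Besov regularity of order $\beta<\alpha$, so the bound on $\nabla^\alpha_{\NL}(f,g)$ must absorb the missing regularity $\gamma=\alpha-\beta$ into~$f$, which forces me to invoke an embedding of $BV^{\alpha,p}$ into a Besov space $B^\gamma_{p',s}$ with the correct integrability and summability; matching these exponents with the H\"older balance in~\eqref{eqi:frac_nabla_NL_Besov} is precisely what pins down the threshold~$\beta_{n,p,\alpha}$.
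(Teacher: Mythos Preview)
Your overall strategy --- mollify $f$, apply the Leibniz rule to the pair $(f_k,g)$, and pass to the limit using the precise-representative theory to handle $\int \phi g\cdot\nabla^\alpha f_k\,dx=\int [(\phi g)*\rho_k]\cdot dD^\alpha f\to\int \phi\, g^\star\cdot dD^\alpha f$ --- is sound and close in spirit to the paper's. The paper organizes things slightly differently: it first proves the formula for \emph{continuous} bounded $g$ by working on the $\div^\alpha$ side (expanding $\div^\alpha(g\phi)$ rather than $\nabla^\alpha(f_kg)$), and then reaches the general case by mollifying $g$ rather than $f$. Your one-step mollification of $f$ is a legitimate shortcut, since the Leibniz rule in $L^1$ already holds for $f_k\in B^\alpha_{p,1}$ and $g\in B^\alpha_{q,1}$.

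However, you have a genuine misunderstanding about the third range of~\eqref{eqi:cases_BV_alpha_p}. You claim that there $g$ has Besov regularity $\beta<\alpha$, and that the threshold $\beta_{n,p,\alpha}$ is what allows you to close the $\nabla^\alpha_{\NL}(f,g)$ estimate via an embedding $BV^{\alpha,p}\hookrightarrow B^{\alpha-\beta}_{p',s}$. This is backwards. A direct check shows $\beta_{n,p,\alpha}\ge\alpha$ in both sub-cases, so in fact $\beta>\alpha$ and $B^\beta_{q,1}\subset B^\alpha_{q,1}$; the elementary bound $\|\nabla^\alpha_{\NL}(f,g)\|_{L^1}\le 2\mu_{n,\alpha}\|f\|_{L^p}[g]_{B^\alpha_{q,1}}$ works uniformly across all four ranges, no Besov regularity of $f$ is required, and your proposed splitting $\alpha=\beta+\gamma$ would have $\gamma<0$. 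The threshold $\beta_{n,p,\alpha}$ has nothing to do with the non-local remainder: it is dictated entirely by the requirement that $g^\star$ be defined $|D^\alpha f|$-a.e. Indeed, $g\in B^\beta_{q,1}\subset S^{\beta,q}$ has its precise representative off a $\Capa_{\beta,q}$-null set, hence off an $\Haus{n-\beta q+\delta}$-null set; comparing with $|D^\alpha f|\ll\Haus{n-1}$ (for $p<\tfrac n{1-\alpha}$) or $|D^\alpha f|\ll\Haus{n-\alpha-n/p}$ (for $p\ge\tfrac n{1-\alpha}$) yields precisely $\beta>1-\tfrac1p$ and $\beta>(1-\tfrac1p)(\alpha+\tfrac np)$. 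A minor additional point: for $p=+\infty$ the phrase ``$f_k\to f$ in $L^p$'' fails, and the corresponding limits (left-hand side and non-local term) must be justified by dominated convergence instead.
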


Our second main result provides an analogue of \cref{resi:leibniz_BV_alpha_p} for the product of two Bessel functions.

\begin{theorem}[Leibniz rule for $S^{\alpha,p}\cdot S^{\alpha,q}$]
\label{resi:lebniz_bessel_conjugate}
Let $\alpha\in(0,1)$ and $p,q\in(1,+\infty)$ be such that $\frac1p+\frac1q=\frac1r$ for some $r\in[1,+\infty)$.
There exists a constant $c_{n,\alpha,p,q}>0$, depending on $n$, $\alpha$, $p$ and~$q$ only, with the following property.
If $f\in S^{\alpha,p}(\R^n)$ and $g\in S^{\alpha,q}(\R^n)$, then $fg\in S^{\alpha,r}(\R^n)$ with
\begin{equation}
\label{eq:z08}	
\nabla^\alpha(fg)
=
g\,\nabla^\alpha f
+
f\,\nabla^\alpha g
+
\nabla^\alpha_{\NL}(f,g)
\quad
\text{in}\
L^r(\R^n;\R^n)
\end{equation}	
and
\begin{equation}
\label{eq:z09}
\|
\nabla^\alpha_{\NL}(f,g)
\|_{L^r(\R^n;\,\R^n)}
\le
c_{n,\alpha,p,q}
\,
\|f\|_{S^{\alpha,p}(\R^n)}
\,
\|g\|_{S^{\alpha,q}(\R^n)}.
\end{equation}
In addition, if $r=1$ then
\begin{equation}
\label{eq:z06}
\int_{\R^n}\nabla^\alpha(fg)\,dx
=
\int_{\R^n}\nabla^\alpha_{\rm NL}(f,g)\,dx=0
\end{equation}
and 
\begin{equation}
\label{eq:z07}	
\int_{\R^n}
f\,\nabla^\alpha g\,dx
=
-
\int_{\R^n}
g\,\nabla^\alpha f\,dx.
\end{equation}  
\end{theorem}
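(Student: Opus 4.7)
The plan is to first establish the bilinear bound \eqref{eq:z09} for smooth, compactly supported $f$ and $g$, then extend to general $f \in S^{\alpha,p}(\R^n)$, $g \in S^{\alpha,q}(\R^n)$ by density of $C^\infty_c(\R^n)$ (available since $p,q \in (1,+\infty)$), and finally derive the cancellation identities \eqref{eq:z06} and \eqref{eq:z07} in the borderline case $r=1$.

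For the core bilinear bound on smooth $f$ and $g$, I would exploit the decomposition \eqref{eqi:frac_nabla_NL_rewrite} and control each summand in $L^r$ by a bilinear extension of the Kenig--Ponce--Vega inequality \eqref{eqi:kpv}, in the form
\[
\|H_\alpha(f,g)\|_{L^r(\R^n)}
\lesssim
\|(-\Delta)^{\frac\alpha2}f\|_{L^p(\R^n)}\|g\|_{L^q(\R^n)}
+
\|f\|_{L^p(\R^n)}\|(-\Delta)^{\frac\alpha2}g\|_{L^q(\R^n)},
\]
by a bilinear extension of the Coifman--Rochberg--Weiss commutator estimate \eqref{eqi:crw}, in the form $\|[R,g](-\Delta)^{\frac\alpha2}f\|_{L^r(\R^n;\,\R^n)} \lesssim \|g\|_{S^{\alpha,q}(\R^n)}\|(-\Delta)^{\frac\alpha2}f\|_{L^p(\R^n)}$ (and symmetrically in $f$ and $g$), and by the $L^r$-boundedness of the Riesz transform for $r \in (1,+\infty)$. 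The identity $\nabla^\alpha = R(-\Delta)^{\frac\alpha2}$ then converts Bessel norms into $S^{\alpha,p}$ and $S^{\alpha,q}$ norms, yielding \eqref{eq:z09}. The main technical obstacle is the boundary case $r=1$, where all three ingredients above are unavailable: I would sidestep the singular-integral machinery by appealing directly to the Besov bound \eqref{eqi:frac_nabla_NL_Besov} with $\beta=\gamma=\frac\alpha2$ and $s=t=2$, combined with the embedding $S^{\alpha,p}(\R^n) = L^{\alpha,p}(\R^n) \hookrightarrow B^{\alpha/2}_{p,2}(\R^n)$, valid for every $p \in (1,+\infty)$ by the standard Triebel--Lizorkin/Besov embedding with a loss in the regularity parameter.

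With \eqref{eq:z09} in hand for smooth functions, pick approximating sequences $f_k,g_k \in C^\infty_c(\R^n)$ with $f_k \to f$ in $S^{\alpha,p}(\R^n)$ and $g_k \to g$ in $S^{\alpha,q}(\R^n)$. H\"older's inequality delivers $f_k g_k \to fg$, $g_k\,\nabla^\alpha f_k \to g\,\nabla^\alpha f$, and $f_k\,\nabla^\alpha g_k \to f\,\nabla^\alpha g$ in $L^r(\R^n)$, while bilinearity and \eqref{eq:z09} show that $\{\nabla^\alpha_{\NL}(f_k,g_k)\}$ is Cauchy in $L^r(\R^n;\R^n)$. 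Its limit is identified with the distributional extension of $\nabla^\alpha_{\NL}(f,g)$ through the non-local integration-by-parts formula \eqref{eqi:ibp_NL}; passing to the limit in the pointwise Leibniz identity \eqref{eqi:leibniz_frac_nabla} then yields \eqref{eq:z08} in $L^r$, and \eqref{eq:z09} survives the limit.

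Finally, in the case $r=1$ both vanishings in \eqref{eq:z06} reduce to cancellations. Since $fg \in S^{\alpha,1}(\R^n)$, the Fourier transform $\widehat{\nabla^\alpha(fg)}$ is continuous and equal to $i\frac{\xi}{|\xi|}|\xi|^\alpha\widehat{fg}(\xi)$ outside the origin; continuity forces its value at $\xi=0$ to be zero, giving $\int_{\R^n}\nabla^\alpha(fg)\,dx=0$. Moreover, the integrand in \eqref{eqi:def_frac_nabla_NL} is antisymmetric in $(x,y)$, so $\int_{\R^n}\nabla^\alpha_{\NL}(f,g)\,dx=0$ by Fubini, whose applicability is guaranteed by \eqref{eqi:trivial_bound_NL} together with the Besov bound. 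Integrating the Leibniz rule \eqref{eq:z08} and invoking these two vanishings then yields \eqref{eq:z07}.
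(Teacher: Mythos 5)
Your proof is correct, but it takes a noticeably heavier route than the paper for the case $r>1$, where you invoke the decomposition~\eqref{eqi:frac_nabla_NL_rewrite} together with a bilinear Kenig--Ponce--Vega inequality for $H_\alpha$ and a commutator bound for $[R,\cdot]$, plus $L^r$-boundedness of the Riesz transform. The paper does \emph{not} use any of this for \cref{resi:lebniz_bessel_conjugate}: the singular-integral/commutator machinery (\cref{res:crw_estimate}, \cref{res:ls_estimate}) is reserved for the $L^\infty$ setting of \cref{resi:lebniz_bessel_bounded}. Instead, the paper proves \eqref{eq:z09} for \emph{all} $r\in[1,+\infty)$ by exactly the elementary bound you propose only for $r=1$: the Besov estimate of \cref{res:nabla_alpha_NL_div_alpha_NL_int} with $\beta=\gamma=\alpha/2$, $s=t=2$, combined with the embedding $[u]_{B^{\alpha/2}_{\ell,2}(\R^n)}\le c_{n,\alpha,\ell}\|u\|_{S^{\alpha,\ell}(\R^n)}$ from~\cite{BCCS20}*{Proposition~B.2}. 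Since that route already covers $r>1$ as well, your two-pronged argument is more complicated than necessary, and moreover rests on a bilinear extension of~\eqref{eqi:kpv} that the paper never introduces. A minor imprecision along the way: the commutator estimate you state, $\|[R,g](-\Delta)^{\alpha/2}f\|_{L^r}\lesssim\|g\|_{S^{\alpha,q}}\|(-\Delta)^{\alpha/2}f\|_{L^p}$, is really just H\"older plus $L^r$-boundedness of $R$ and only needs $\|g\|_{L^q}$ on the right, not a genuine Coifman--Rochberg--Weiss theorem. The approximation step and the $r=1$ cancellations match the paper's content in substance; you derive the two vanishings in \eqref{eq:z06} first (via the Fourier-vanishing of the mean, which is the content of \cref{res:zero_total_mesure}, and via antisymmetry/Fubini for $\nabla^\alpha_{\NL}$) and then obtain \eqref{eq:z07}, whereas the paper obtains \eqref{eq:z07} directly by passing to the limit in the smooth identity and then deduces \eqref{eq:z06} from it and \cref{res:zero_total_mesure}; the two orderings are logically interchangeable.
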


Finally, our third main result rephrases the fractional Leibniz-type formula~\eqref{eqi:kpv} by Kenig--Ponce--Vega for the product of two bounded $S^{\alpha,p}$  functions with $p\in(1,+\infty)$.
Here $\nabla^\alpha_{\NL,w}$ denotes the non-local fractional gradient defined in the weak sense via the integration-by-parts formula~\eqref{eqi:ibp_NL}, see \cref{def:weak_NL_grad} below for more details. 

\begin{theorem}
[Leibniz rule in $S^{\alpha,p}\cap L^\infty$]
\label{resi:lebniz_bessel_bounded}
Let $\alpha\in(0,1)$ and $p\in(1,+\infty)$.
There exists a constant $c_{n,\alpha,p}>0$, depending on $n$, $\alpha$ and $p$ only, with the following property.
If $f,g\in S^{\alpha,p}(\R^n)\cap L^\infty(\R^n)$, then $fg\in S^{\alpha,p}(\R^n)\cap L^\infty(\R^n)$ and $\nabla^\alpha_{\NL,w}(f,g)\in L^p(\R^n;\R^n)$, with
\begin{equation*}
\nabla^\alpha(fg)
=
g\,\nabla^\alpha f
+
f\,\nabla^\alpha g
+
\nabla^\alpha_{\NL,w}(f,g)
\quad
\text{in}\
L^p(\R^n;\R^n)
\end{equation*}	
and
\begin{equation*}
\|
\nabla^\alpha_{\NL,w}(f,g)
\|_{L^p(\R^n;\,\R^n)}
\le
c_{n,\alpha,p}
\big(
\|\nabla^\alpha f\|_{L^p(\R^n;\,\R^n)}\,\|g\|_{L^\infty(\R^n)}
+
\|f\|_{L^\infty(\R^n)}\,\|\nabla^\alpha g\|_{L^p(\R^n;\,\R^n)}	
\big).
\end{equation*}
\end{theorem}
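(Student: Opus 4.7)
The proof is the natural completion of the discussion leading to~\eqref{eqi:frac_nabla_NL_Lp_bound}: the plan is to establish the stated bound for smooth compactly supported approximants via the rewrite~\eqref{eqi:frac_nabla_NL_rewrite} and then pass to the limit in a weak topology. First I would approximate $f,g\in S^{\alpha,p}(\R^n)\cap L^\infty(\R^n)$ by $f_k,g_k\in C^\infty_c(\R^n)$ via the standard truncation--mollification scheme $f_k=(f\chi_k)*\rho_k$, thereby securing $\|f_k\|_{L^\infty}\le\|f\|_{L^\infty}$ and $\|g_k\|_{L^\infty}\le\|g\|_{L^\infty}$, strong convergence $f_k\to f$ and $g_k\to g$ in $S^{\alpha,p}(\R^n)$ (using that $S^{\alpha,p}$ coincides with the Bessel potential space $L^{\alpha,p}$ for $p\in(1,+\infty)$ and the density of $C^\infty_c$ there, recalled in the introduction), and a.e.\ convergence along a subsequence.

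For these smooth approximants the pointwise Leibniz rule~\eqref{eqi:leibniz_frac_nabla} holds. Combining it with the decomposition~\eqref{eqi:frac_nabla_NL_rewrite}, the Kenig--Ponce--Vega estimate~\eqref{eqi:kpv} on $H_\alpha$, the Coifman--Rochberg--Weiss commutator bound~\eqref{eqi:crw} on $[R,\,\cdot\,]$, and the standard $L^p$-boundedness of the Riesz transform for $p\in(1,+\infty)$ yields the uniform estimate
\[
\|\nabla^\alpha_{\NL}(f_k,g_k)\|_{L^p(\R^n;\R^n)}
\le c_{n,\alpha,p}\bigl(\|\nabla^\alpha f_k\|_{L^p}\,\|g_k\|_{L^\infty}+\|f_k\|_{L^\infty}\,\|\nabla^\alpha g_k\|_{L^p}\bigr).
\]
By reflexivity of $L^p$ for $p\in(1,+\infty)$, up to a subsequence $\nabla^\alpha_{\NL}(f_k,g_k)\weakto V$ weakly in $L^p(\R^n;\R^n)$ for some $V$ whose $L^p$-norm is controlled by the right-hand side of the desired bound via weak lower semicontinuity and the uniform $L^\infty$ bounds.

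To identify $V$ with $\nabla^\alpha_{\NL,w}(f,g)$, I would test against $\phi\in C^\infty_c(\R^n;\R^n)$ and exploit the integration-by-parts formula~\eqref{eqi:ibp_NL} available in the smooth setting, namely
\[
\int_{\R^n}\phi\cdot\nabla^\alpha_{\NL}(f_k,g_k)\,dx=\int_{\R^n}f_k\,\div^\alpha_{\NL}(g_k,\phi)\,dx.
\]
Dominated convergence on the right-hand side---using uniform $L^\infty$ bounds on $f_k$, a.e.\ convergence of $f_k$ and $g_k$, and the Besov-type bound~\eqref{eqi:frac_nabla_NL_Besov} to produce an integrable dominating function for $\div^\alpha_{\NL}(g_k,\phi)$---produces $\int_{\R^n}f\,\div^\alpha_{\NL}(g,\phi)\,dx$, matching the weak definition of $\nabla^\alpha_{\NL,w}(f,g)$ recorded in \cref{def:weak_NL_grad}.

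Passing to the limit in the smooth Leibniz rule then closes the argument. The left-hand side $\nabla^\alpha(f_kg_k)$ converges to $\nabla^\alpha(fg)$ in the sense of tempered distributions, since $f_kg_k\to fg$ in $L^p$ by the uniform $L^\infty$ bounds combined with $L^p$-convergence of the approximants. The local terms $g_k\nabla^\alpha f_k$ and $f_k\nabla^\alpha g_k$ converge strongly in $L^p$ to $g\nabla^\alpha f$ and $f\nabla^\alpha g$ respectively, via the splitting $g_k\nabla^\alpha f_k-g\nabla^\alpha f = g_k(\nabla^\alpha f_k-\nabla^\alpha f)+(g_k-g)\nabla^\alpha f$ together with dominated convergence. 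The non-local term converges weakly in $L^p$ to $\nabla^\alpha_{\NL,w}(f,g)$ by the preceding step. Therefore $\nabla^\alpha(fg)\in L^p(\R^n;\R^n)$, i.e.\ $fg\in S^{\alpha,p}(\R^n)\cap L^\infty(\R^n)$, with the claimed identity and bound. The main difficulty is the identification step linking the weak $L^p$-limit $V$ to the distributional object $\nabla^\alpha_{\NL,w}(f,g)$: it relies crucially on the Besov estimate~\eqref{eqi:frac_nabla_NL_Besov} to furnish the integrable majorant needed in the integration-by-parts identity. Note that the restriction to $p\in(1,+\infty)$ is essential, since both~\eqref{eqi:kpv} and~\eqref{eqi:crw} are known to fail at $p=1$.
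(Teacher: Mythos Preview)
Your proof is correct and follows essentially the same approach as the paper: approximate by $C^\infty_c$ functions with uniform $L^\infty$ bounds, obtain the uniform $L^p$ estimate on $\nabla^\alpha_{\NL}(f_k,g_k)$ via the Kenig--Ponce--Vega and Coifman--Rochberg--Weiss bounds, extract a weak $L^p$-limit by reflexivity, identify it with $\nabla^\alpha_{\NL,w}(f,g)$ through the non-local integration-by-parts formula~\eqref{eqi:ibp_NL}, and pass to the limit in the smooth Leibniz rule. The only minor differences are that the paper carries out the identification step via the swapping property of $\div^\alpha_{\NL}$ (\cref{res:nabla_div_NL_swap}) together with H\"older's inequality rather than by dominated convergence, and that the pointwise majorant you actually need for your dominated convergence argument is $2\mu_{n,\alpha}\|f\|_{L^\infty}\|g\|_{L^\infty}\,\mathcal D^\alpha\phi\in L^1(\R^n)$, which comes from the elementary bound~\eqref{eqi:trivial_bound_NL} and~\eqref{eqi:frac_nabla_Besov} rather than from the $L^r$-norm estimate~\eqref{eqi:frac_nabla_NL_Besov}.
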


\subsection{An application to PDEs}

In the classical setting, a second-order partial differential operator in divergence form on an open set $\Omega\subset\R^n$  can be expressed as
\begin{equation}
\label{eqi:diff_op_L}
Lu
=
-\div(A\nabla u)
+
b\cdot\nabla u
+
c\,u
\end{equation} 
for suitably regular  $u\colon\Omega\to\R$ and given $A\colon \Omega\to\R^{n^2}$, $b\colon\Omega\to\R^n$ and $c\colon\Omega\to\R$.

From the physical point of view, $u$ is the \emph{density} of some substance, so that the second-order term $-\div(A\nabla u)$ represents the \emph{diffusion} of $u$ within $\Omega$, where the coefficient matrix $A$ models the anisotropic heterogenous nature of the medium, the first order term $b\cdot\nabla u$ stands for the \emph{transport} of $u$ within $\Omega$ according to the velocity field $b$, and the zero-order term $cu$ describes the local \emph{increase} or \emph{depletion} of the substance.

From a purely differential point of view, at least for a sufficiently regular vector field $b$, there is no difference between the operator $L$ in~\eqref{eqi:diff_op_L} and the operator
\begin{equation}
\label{eqi:diff_op_L_tilde}
\tilde Lu
=
-\div(A\nabla u)
+
\div(bu)
+
\tilde c \, u,
\end{equation}
where the \emph{transport} term $b\cdot\nabla u$ has been replaced by the \emph{continuity} term $\div(bu)$. 
Indeed, by the usual Leibniz rule, one has
\begin{equation*}
\div(bu)
=
u\,\div b
+
b\cdot\nabla u,
\end{equation*}
so that we can rewrite the operator in~\eqref{eqi:diff_op_L_tilde} in the form~\eqref{eqi:diff_op_L}. 

In the present fractional framework, for $\alpha\in(0,1)$, the natural fundamental \emph{diffusion} operator is given by the \emph{fractional Laplacian} of order~$2\alpha$
\begin{equation*}
(-\Delta)^\alpha u(x)
=
c_{n,\alpha}
\int_{\R^n}\frac{u(y)-u(x)}{|y-x|^{n+2\alpha}}\,dy,
\quad
x\in\R^n,
\end{equation*} 
where $c_{n,\alpha}$ is a suitable constant.
As proved in~\cite{Sil19}*{Theorem~5.3}, one can actually write 
$(-\Delta)^\alpha=-\div^\alpha\nabla^\alpha$, so that $-\div^\alpha(A\nabla^\alpha u)$ can be seen as the natural model of an anisotropic heterogenous fractional diffusion (see also~\cite{SS15}).

Following this analogy, we may interpret $\div^\alpha(bu)$ and $b\cdot\nabla^\alpha u$ as the corresponding natural fractional continuity and transport $\alpha$-order terms, respectively.
However, differently from the integer framework, even for a smooth vector field~$b$ the fractional continuity term cannot be reinterpreted as a fractional transport term plus a zero-order term, since the fractional Leibniz rule
\begin{equation*}
\div^\alpha(bu)
=
u\,\div^\alpha b +
b\cdot\nabla^\alpha u
+
\div^\alpha_{\NL}(u,b)
\end{equation*} 
forces the presence of the non-local divergence $\alpha$-order term $\div^\alpha_{\NL}(u,b)$. 
In this sense, the most general expression of a $2\alpha$-order differential operator in (fractional) divergence form is
\begin{equation}
\label{eqi:diff_op_L_frac}
L_\alpha u
=
-\div^\alpha(A\nabla^\alpha u)
-
c_1\,\div^\alpha(b_1u)
+
b_2\cdot\nabla^\alpha u
+
c_3\,\div^\alpha_{\NL}(u,b_3)
+
c_0\,u
\end{equation}
for some given $A\colon\R^n\to\R^{n^2}$,
$b_1,b_2,b_3\colon\R^n\to\R^n$ 
and 
$c_0,c_1,c_3\colon\R^n\to\R$. 

At this point, it is natural to investigate under which (weakest possible) regularity assumptions on~$A$, $b_1$, $b_2$, $b_3$, $c_0$, $c_1$ and~$c_3$ the fractional differential operator $L_\alpha$ in~\eqref{eqi:diff_op_L_frac} is well posed. 
Imitating the usual strategy adopted in the integer setting (see~\cite{E10}*{Chapter~6} for instance), we prove the following \emph{energy estimates} for the associated bilinear form $\mathsf B_\alpha\colon S^{\alpha,2}(\R^n)\times S^{\alpha,2}(\R^n)\to\R$ given by
\begin{equation}
\label{eqi:bil_form_B}
\begin{split}
\mathsf B_\alpha[u,v]
&=
\int_{\R^n}\nabla^\alpha v\cdot A\nabla^\alpha u\,dx
+
\int_{\R^n}ub_1\cdot\nabla^\alpha(c_1v)
\,dx
+
\int_{\R^n} vb_2\cdot\nabla^\alpha u\,dx
\\
&\quad
+
\int_{\R^n}c_3v\,\div^\alpha_{\NL,w}(u,b_3)\,dx
+
\int_{\R^n}c_0uv\,dx
\end{split}
\end{equation}
for all $u,v\in S^{\alpha,2}(\R^n)$. 
In the above formula, $\div^\alpha_{\NL,w}$ denotes the non-local fractional divergence defined in the weak sense via the integration-by-parts formula~\eqref{eqi:ibp_NL}, see \cref{def:weak_NL_grad} below for more details.

\begin{proposition}[Energy estimates]
\label{res:energy_est}
Let $\alpha\in(0,1)$ be such that $\alpha<\frac n2$.
Let $A\in L^\infty(\R^n; \R^{n^2})$ be such that there exists a constant $\theta>0$ for which
\begin{equation}
\label{eq:matrix_coercive}
\xi\cdot A(x)\xi
\ge\theta|\xi|^2
\end{equation}
for a.e.\ $x\in\R^n$ and all $\xi\in\R^n$.
Let 
$b_1,b_2\in L^\infty(\R^n;\R^n)$,
$b_3\in L^\infty(\R^n;\R^n)\cap b^\alpha_{\frac n\alpha,1}(\R^n;\R^n)$,  
$c_1\in L^\infty(\R^n)\cap b^\alpha_{\frac n\alpha, 1}(\R^n)$ and $c_0, c_3 \in L^\infty(\R^n)$.
Then, there exist two constants $C,M>0$ depending on 
\begin{equation*}
\begin{split}
&
\|A\|_{L^\infty(\R^n;\,\R^{n^2})},
\
\|b_1\|_{L^\infty(\R^n;\,\R^n)},
\
\|b_2\|_{L^\infty(\R^n;\,\R^n)},
\
\|b_3\|_{L^\infty(\R^n;\,\R^n)},
\
[b_3]_{B^\alpha_{\frac n\alpha,1}(\R^n;\,\R^n)},
\\
&
\|c_1\|_{L^\infty(\R^n)},
\
[c_1]_{B^\alpha_{\frac n\alpha,1}(\R^n)},
\
\|c_0\|_{L^\infty(\R^n)},
\
\|c_3\|_{L^\infty(\R^n)}
\end{split}	
\end{equation*}
only such that 
\begin{equation}
\label{eq:energy_est_cont}
|\mathsf B_\alpha[u,v]|
\le
M
\,
\|u\|_{S^{\alpha,2}(\R^n)}
\,
\|v\|_{S^{\alpha,2}(\R^n)}
\end{equation}
and 
\begin{equation}
\label{eq:energy_est_coercive}
\frac\theta2\,\|\nabla^\alpha u\|_{L^2(\R^n; \, \R^n)}^2
\le
\mathsf B_\alpha[u,u]
+
C\,\|u\|_{L^2(\R^n)}^2
\end{equation}
for all $u,v\in S^{\alpha,2}(\R^n)$.
\end{proposition}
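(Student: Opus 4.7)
The plan is to prove the two estimates (\ref{eq:energy_est_cont}) and (\ref{eq:energy_est_coercive}) separately by bounding each of the five summands in the definition (\ref{eqi:bil_form_B}) of $\mathsf B_\alpha[u,v]$ term by term, and then specializing those bounds at $v=u$ and absorbing the resulting $\|\nabla^\alpha u\|_{L^2}$ factors on the left-hand side via Young's inequality.

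For the continuity (\ref{eq:energy_est_cont}), the leading diffusion term $\int_{\R^n}\nabla^\alpha v\cdot A\nabla^\alpha u\,dx$, the transport term $\int_{\R^n} v\,b_2\cdot\nabla^\alpha u\,dx$ and the zero-order term $\int_{\R^n} c_0 uv\,dx$ are all estimated in a routine way by Hölder's inequality together with the $L^\infty$-bounds on $A$, $b_2$ and $c_0$, each contributing a term of the desired form $\leq M\|u\|_{S^{\alpha,2}(\R^n)}\|v\|_{S^{\alpha,2}(\R^n)}$. The $b_1$-term requires expanding $\nabla^\alpha(c_1 v)$ via the pointwise Leibniz identity (\ref{eqi:leibniz_frac_nabla}): the contribution $c_1\nabla^\alpha v$ is handled by $\|c_1\|_\infty$; the contribution $v\nabla^\alpha c_1$ is controlled via Hölder with exponents $\tfrac{2n}{n-2\alpha}$ and $\tfrac{n}{\alpha}$, invoking the Sobolev embedding $S^{\alpha,2}(\R^n)\hookrightarrow L^{2n/(n-2\alpha)}(\R^n)$ (valid since $\alpha<\tfrac{n}{2}$) together with the Besov embedding $\|\nabla^\alpha c_1\|_{L^{n/\alpha}(\R^n;\,\R^n)}\lesssim[c_1]_{B^{\alpha}_{n/\alpha,1}(\R^n)}$; and the non-local remainder $\nabla^\alpha_{\NL}(c_1,v)$ is estimated by the Minkowski--Hölder-type bound (\ref{eqi:frac_nabla_NL_Besov}) applied with $r=2$, $p=\tfrac{2n}{n-2\alpha}$ and $q=\tfrac{n}{\alpha}$. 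The $(c_3,b_3)$-term is treated in a completely analogous fashion, estimating $\|\div^\alpha_{\NL,w}(u,b_3)\|_{L^2(\R^n)}$ via the same Minkowski--Hölder bound applied to $\mathcal D^\alpha_{\NL}(u,b_3)$ after identifying the weak and pointwise versions of $\div^\alpha_{\NL}$ on smooth approximants by density in $S^{\alpha,2}(\R^n)$.

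For the coercivity (\ref{eq:energy_est_coercive}), we set $v=u$. The coercivity hypothesis (\ref{eq:matrix_coercive}) yields the fundamental lower bound $\int_{\R^n}\nabla^\alpha u\cdot A\nabla^\alpha u\,dx\ge\theta\|\nabla^\alpha u\|_{L^2(\R^n;\,\R^n)}^2$, while each of the other four terms at $v=u$ is controlled, by the continuity analysis just performed, by a sum of the form $C_k\|u\|_{L^2(\R^n)}\|\nabla^\alpha u\|_{L^2(\R^n;\,\R^n)}+C_k'\|u\|_{L^2(\R^n)}^2$, once we split $\|u\|_{S^{\alpha,2}(\R^n)}=\|u\|_{L^2(\R^n)}+\|\nabla^\alpha u\|_{L^2(\R^n;\,\R^n)}$. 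Applying Young's inequality in the sharp form $C_k\|u\|_{L^2(\R^n)}\|\nabla^\alpha u\|_{L^2(\R^n;\,\R^n)}\le\tfrac{\theta}{8}\|\nabla^\alpha u\|_{L^2(\R^n;\,\R^n)}^2+\tfrac{2 C_k^2}{\theta}\|u\|_{L^2(\R^n)}^2$ to each of the four mixed products transfers a total of at most $\tfrac{\theta}{2}\|\nabla^\alpha u\|_{L^2(\R^n;\,\R^n)}^2$ from the right-hand side to the left-hand side, leaving a net coefficient $\tfrac{\theta}{2}$ in front of $\|\nabla^\alpha u\|_{L^2(\R^n;\,\R^n)}^2$ and a constant $C>0$ (depending on all the listed norms and on $\theta$) in front of $\|u\|_{L^2(\R^n)}^2$, which is exactly (\ref{eq:energy_est_coercive}).

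The main technical obstacle lies in handling the two non-local terms (those associated with $b_1,c_1$ and $c_3,b_3$): since $c_1$ and $b_3$ are not assumed to belong to $S^{\alpha,2}(\R^n)$, we cannot invoke the Kenig--Ponce--Vega-type Theorem~\ref{resi:lebniz_bessel_bounded} directly, and we are forced to rely on the more elementary Besov-type bound (\ref{eqi:frac_nabla_NL_Besov}). The choice $q=\tfrac{n}{\alpha}$ is dictated by the critical Sobolev exponent $\tfrac{2n}{n-2\alpha}$ for $S^{\alpha,2}(\R^n)$, and it is precisely at this step that the assumption $\alpha<\tfrac{n}{2}$ enters the argument in an essential way.
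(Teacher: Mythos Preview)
Your proposal is correct and follows essentially the same route as the paper: the paper packages the key step---bounding $\|v\,\nabla^\alpha c_1\|_{L^2}+\|\nabla^\alpha_{\NL,w}(c_1,v)\|_{L^2}\lesssim[c_1]_{B^\alpha_{n/\alpha,1}}\|\nabla^\alpha v\|_{L^2}$ and the analogous estimate for $\div^\alpha_{\NL,w}(u,b_3)$---as a separate lemma (\cref{res:Leibniz_Bessel_bounded_Besov}, specifically~\eqref{eq:Leibniz_Bessel_bounded_Besov_est_bis}), whose proof uses exactly the Sobolev embedding $S^{\alpha,2}\hookrightarrow L^{2n/(n-2\alpha)}$, H\"older with exponents $\tfrac{2n}{n-2\alpha}$ and $\tfrac n\alpha$, and the Minkowski--H\"older bound~\eqref{eqi:frac_nabla_NL_Besov} that you inline; the coercivity step via Young's inequality is identical.
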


As a consequence, thanks to the Lax--Milgram Theorem, we obtain the following well-posedness result for the operator $L_\alpha$ on an arbitrary open set with finite measure $\Omega\subset\R^n$, generalizing~\cite{SS15}*{Theorem~1.13}. 

\begin{corollary}[Fractional boundary-value problem]
\label{res:boundary_problem}
Let $\Omega\subset\R^n$ be an open set such that $|\Omega| < +\infty$ and let
\begin{equation*}
S^{\alpha,2}_0(\Omega)
=
\set*{u\in S^{\alpha,2}(\R^n) : u=0\ \text{in}\ \R^n\setminus\Omega}.
\end{equation*}
With the same notation and the same assumptions of \cref{res:energy_est}, if $\lambda \ge C$ then,
for each $f\in L^2(\Omega)$, there exists a unique weak solution $u\in S^{\alpha,2}_0(\Omega)$ of the boundary-value problem
\begin{equation}
\label{eqi:frac_BVP}
\begin{cases}
L_\alpha u+\lambda u=f & \text{in}\ \Omega,\\[2mm]
u=0 & \text{in}\ \R^n\setminus\Omega,
\end{cases}
\end{equation} 
in the sense that 
\begin{equation*}
\mathsf B_\alpha[u,v]
+\lambda\scalar*{u,v}_{L^2(\R^n)}= \scalar*{\tilde f, v}_{L^2(\R^n)}
\end{equation*}
for all $v \in S^{\alpha, 2}_0(\Omega)$, where $\tilde f$ is the extension-by-zero of $f$ outside~$\Omega$ and $\scalar*{\cdot, \cdot}_{L^2(\R^n)}$ is the standard scalar product in $L^2(\R^n)$.
\end{corollary}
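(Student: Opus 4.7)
The plan is to invoke the Lax--Milgram theorem on the Hilbert space $S^{\alpha,2}_0(\Omega)$ applied to the perturbed bilinear form
\[
\mathsf B_\alpha^\lambda[u,v] := \mathsf B_\alpha[u,v] + \lambda\,\scalar*{u,v}_{L^2(\R^n)}
\]
and to the linear functional $F(v) := \scalar*{\tilde f, v}_{L^2(\R^n)}$. The unique element produced by Lax--Milgram will be, by definition, the weak solution of~\eqref{eqi:frac_BVP}.

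First I would set the Hilbert structure in place: $S^{\alpha,2}(\R^n)$ is a Hilbert space under the inner product $\scalar*{u,v}_{L^2(\R^n)} + \scalar*{\nabla^\alpha u, \nabla^\alpha v}_{L^2(\R^n;\,\R^n)}$ (via its identification with the Bessel potential space $L^{\alpha,2}(\R^n)$), and the linear condition $u = 0$ in $\R^n\setminus\Omega$ cuts out a closed subspace, so $S^{\alpha,2}_0(\Omega)$ inherits the Hilbert structure. Continuity of $\mathsf B_\alpha^\lambda$ is then immediate from~\eqref{eq:energy_est_cont} and the trivial bound $|\lambda\,\scalar*{u,v}_{L^2(\R^n)}|\le\lambda\,\|u\|_{S^{\alpha,2}(\R^n)}\,\|v\|_{S^{\alpha,2}(\R^n)}$, while continuity of $F$ follows from $|F(v)|\le\|f\|_{L^2(\Omega)}\,\|v\|_{S^{\alpha,2}(\R^n)}$.

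The key step is coercivity. Adding $\lambda\,\|u\|_{L^2(\R^n)}^2$ to both sides of~\eqref{eq:energy_est_coercive} gives
\[
\mathsf B_\alpha^\lambda[u,u] \ge \frac{\theta}{2}\,\|\nabla^\alpha u\|_{L^2(\R^n;\,\R^n)}^2 + (\lambda - C)\,\|u\|_{L^2(\R^n)}^2 \ge \frac{\theta}{2}\,\|\nabla^\alpha u\|_{L^2(\R^n;\,\R^n)}^2
\]
whenever $\lambda\ge C$. Since the $L^2$-contribution may vanish at the endpoint $\lambda = C$, I would close the argument via a fractional Poincar\'e-type inequality on $S^{\alpha,2}_0(\Omega)$: the assumption $\alpha<\frac n2$ permits the Sobolev-type embedding $S^{\alpha,2}(\R^n)\hookrightarrow L^{2n/(n-2\alpha)}(\R^n)$ (which follows from the Bessel potential identification recalled earlier), and H\"older's inequality on the support $\Omega$ of $u$ then yields
\[
\|u\|_{L^2(\R^n)} \le c_n\,|\Omega|^{\alpha/n}\,\|\nabla^\alpha u\|_{L^2(\R^n;\,\R^n)}
\quad\text{for every}\ u\in S^{\alpha,2}_0(\Omega),
\]
so that $\mathsf B_\alpha^\lambda[u,u] \ge c_0\,\|u\|_{S^{\alpha,2}(\R^n)}^2$ for a positive constant $c_0$ depending on $\theta$, $n$, $\alpha$ and $|\Omega|$.

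With continuity and coercivity established, Lax--Milgram delivers a unique $u\in S^{\alpha,2}_0(\Omega)$ with $\mathsf B_\alpha^\lambda[u,v] = F(v)$ for every test $v\in S^{\alpha,2}_0(\Omega)$, which is the sought weak formulation of~\eqref{eqi:frac_BVP}. The main obstacle, beyond having Proposition~\ref{res:energy_est} itself at disposal, is the Poincar\'e-type control needed to absorb $\|u\|_{L^2(\R^n)}^2$ into $\|\nabla^\alpha u\|_{L^2(\R^n;\,\R^n)}^2$ when $\lambda = C$ exactly; once this is in hand, no further analytic input beyond Lax--Milgram is required.
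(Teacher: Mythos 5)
Your proposal is correct and follows essentially the same route as the paper: apply the Lax--Milgram theorem to the perturbed bilinear form on the closed subspace $S^{\alpha,2}_0(\Omega)$, using the energy estimates \eqref{eq:energy_est_cont}--\eqref{eq:energy_est_coercive} for continuity and partial coercivity, and absorbing the $L^2$-term via a fractional Poincar\'e inequality on sets of finite measure. The only difference is cosmetic: the paper cites the Poincar\'e inequality directly, whereas you re-derive it from the Sobolev embedding $S^{\alpha,2}(\R^n)\hookrightarrow L^{2n/(n-2\alpha)}(\R^n)$ combined with H\"older's inequality on $\Omega$, which is precisely the standard proof of that inequality.
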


Note that \cref{res:boundary_problem} actually holds for any non-empty set $\Omega$ with finite measure, not necessarily open---we added this topological assumption in analogy with the classical setting.

Although we do not pursue this direction in detail here, it is worth mentioning that the approach leading to \cref{res:boundary_problem} can be likewise applied to the boundary-value problem for the parabolic fractional operator $\partial_t+L_\alpha$ (where now the coefficients of $L_\alpha$ may depend also on the time variable $t\ge0$) by suitably adapting the \emph{Garlekin's approximation method} (see~\cite{E10}*{Chapter~7} for example) to the present fractional setting.

The validity of \cref{res:energy_est} follows from the following Leibniz rule for the product of an $S^{\alpha,p}$ function with a bounded function having finite $B^\alpha_{r,1}$-seminorm, where $p\in(1,+\infty)$, $n>\alpha p$ and $r \in \left [ \frac{n}{\alpha}, + \infty \right ]$.
The proof of \cref{res:Leibniz_Bessel_bounded_Besov} combines the previous results with the fractional Sobolev embedding $S^{\alpha,p}(\R^n)\subset L^{\frac{np}{n-\alpha p}}(\R^n)$.
Here $\nabla^\alpha_{\NL,w}$ denotes the non-local fractional gradient defined in the weak sense via the integration-by-parts formula~\eqref{eqi:ibp_NL}, see \cref{def:weak_NL_grad} below for more details. 

\begin{theorem}[Leibniz rule for $S^{\alpha,p}$ with bounded Besov]
\label{res:Leibniz_Bessel_bounded_Besov}
Let $\alpha\in(0,1)$ and $p\in(1,+\infty)$ be such that $n>\alpha p$, and let $r \in \left [ \frac{n}{\alpha}, + \infty \right ]$.
There exists a constant $c_{n,\alpha,p}>0$, depending on~$n$, $\alpha$ and~$p$ only, with the following property. 
If $f\in S^{\alpha,p}(\R^n)$ and $g\in L^\infty(\R^n)\cap b^\alpha_{r,1}(\R^n)$, then $fg\in S^{\alpha,p}(\R^n)$ with
\begin{equation}
\label{eq:Leibniz_Bessel_bounded_Besov}
\nabla^\alpha(fg)
=
g\,\nabla^\alpha f
+
f\,\nabla^\alpha g
+
\nabla^\alpha_{\NL,w}(f,g)
\quad
\text{in}\ L^p(\R^n;\R^n)
\end{equation}
and
\begin{equation}
\label{eq:Leibniz_Bessel_bounded_Besov_est}
\|f\,\nabla^\alpha g\|_{L^p(\R^n;\,\R^n)}
+
\|\nabla^\alpha_{\NL,w}(f,g)\|_{L^p(\R^n;\,\R^n)}
\le
c_{n,\alpha,p}^{\frac n{\alpha r}}
\, \|f\|_{L^p(\R^n)}^{1-\frac n{\alpha r}}
\|\nabla^\alpha f\|_{L^p(\R^n;\,\R^n)}^{\frac n{\alpha r}}
\, 
[g]_{B^\alpha_{r,1}(\R^n)}.
\end{equation}
In particular, if $r = \frac{n}{\alpha}$, then
\begin{equation}
\label{eq:Leibniz_Bessel_bounded_Besov_est_bis}
\|f\,\nabla^\alpha g\|_{L^p(\R^n;\,\R^n)}
+
\|\nabla^\alpha_{\NL,w}(f,g)\|_{L^p(\R^n;\,\R^n)}
\le
c_{n,\alpha,p}
\,
\|\nabla^\alpha f\|_{L^p(\R^n;\,\R^n)}
\, 
[g]_{B^\alpha_{\frac{n}{\alpha},1}(\R^n)}.
\end{equation}
\end{theorem}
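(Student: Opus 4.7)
The plan is to derive the two $L^p$-bounds \eqref{eq:Leibniz_Bessel_bounded_Besov_est} from the elementary Besov inequalities \eqref{eqi:frac_nabla_Besov} and \eqref{eqi:frac_nabla_NL_Besov}, combined with H\"older's inequality, log-convex interpolation, and the fractional Sobolev embedding $S^{\alpha,p}(\R^n)\hookrightarrow L^{p^\ast}(\R^n)$ with $p^\ast=\tfrac{np}{n-\alpha p}$, which is available thanks to the assumption $\alpha p<n$. The Leibniz identity \eqref{eq:Leibniz_Bessel_bounded_Besov} would then be obtained by an approximation argument from the pointwise version \eqref{eqi:leibniz_frac_nabla} already at our disposal on smooth compactly supported functions.

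Concretely, I would define $q\in[p,p^\ast]$ by $\tfrac{1}{q}=\tfrac{1}{p}-\tfrac{1}{r}$, so that the endpoints $r=n/\alpha$ and $r=\infty$ correspond to $q=p^\ast$ and $q=p$ respectively. H\"older's inequality together with \eqref{eqi:frac_nabla_Besov} applied to $g$ yields
\begin{equation*}
\|f\,\nabla^\alpha g\|_{L^p(\R^n;\R^n)}\le \mu_{n,\alpha}\,\|f\|_{L^q(\R^n)}\,[g]_{B^\alpha_{r,1}(\R^n)},
\end{equation*}
while the second case of \eqref{eqi:frac_nabla_NL_Besov} provides the analogous bound for $\|\nabla^\alpha_{\NL}(f,g)\|_{L^p(\R^n;\R^n)}$ up to a factor $2$. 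The exponent $\tfrac{n}{\alpha r}$ in \eqref{eq:Leibniz_Bessel_bounded_Besov_est} would then arise from the standard interpolation
\begin{equation*}
\|f\|_{L^q(\R^n)}\le \|f\|_{L^p(\R^n)}^{1-\frac{n}{\alpha r}}\,\|f\|_{L^{p^\ast}(\R^n)}^{\frac{n}{\alpha r}}
\end{equation*}
combined with the Sobolev bound $\|f\|_{L^{p^\ast}(\R^n)}\le c_{n,\alpha,p}\|\nabla^\alpha f\|_{L^p(\R^n;\R^n)}$. The distinguished case $r=n/\alpha$ in \eqref{eq:Leibniz_Bessel_bounded_Besov_est_bis} is then immediate, since $q=p^\ast$ and the Sobolev embedding is used directly.

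To pass from the pointwise Leibniz rule on smooth functions to the weak identity \eqref{eq:Leibniz_Bessel_bounded_Besov}, I would approximate $f$ by $(f_k)\subset C^\infty_c(\R^n)$ converging to $f$ in $S^{\alpha,p}(\R^n)$ and $g$ by $(g_k)\subset C^\infty_c(\R^n)$ obtained through mollification and smooth truncation, uniformly bounded in $L^\infty(\R^n)\cap b^\alpha_{r,1}(\R^n)$ and converging to $g$ weakly-$\ast$ in $L^\infty(\R^n)$ and strongly in $L^p_{\loc}(\R^n)$. Under such a choice, the linear terms $g_k\,\nabla^\alpha f_k$ and $f_k\,\nabla^\alpha g_k$ converge in $L^p(\R^n;\R^n)$ by continuity of the bilinear bounds just established, whereas the non-local term is handled via the integration-by-parts identity \eqref{eqi:ibp_NL} defining $\nabla^\alpha_{\NL,w}$: the uniform $L^p$-bound on $\nabla^\alpha_{\NL}(f_k,g_k)$ produces a weakly convergent subsequence, whose limit is identified with $\nabla^\alpha_{\NL,w}(f,g)$ by testing against $\phi\in C^\infty_c(\R^n;\R^n)$ and exploiting the duality \eqref{eqi:ibp_NL}.

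The main obstacle is the approximation step: since smooth compactly supported functions are not dense in the homogeneous Besov seminorm $b^\alpha_{r,1}$ (especially for $r=\infty$), and since for merely $L^\infty\cap b^\alpha_{r,1}$ data the non-local gradient is only defined weakly, one cannot simply invoke pointwise a.e.\ convergence. The identification of the weak limit with $\nabla^\alpha_{\NL,w}(f,g)$ must therefore rely on the bilinear continuity estimate built in the second step, together with the symmetric role of $f$ and $g$ in the duality \eqref{eqi:ibp_NL}, to transfer the limit from the pointwise formula on $C^\infty_c\times C^\infty_c$ to the weak one in the target spaces.
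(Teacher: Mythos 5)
The estimate part of your proposal is essentially the paper's argument: the paper also combines H\"older's inequality, the Besov bound $\|\nabla^\alpha g\|_{L^r}\le\mu_{n,\alpha}[g]_{B^\alpha_{r,1}}$, the second case of \cref{res:nabla_alpha_NL_div_alpha_NL_int}, the fractional Sobolev embedding $S^{\alpha,p}(\R^n)\subset L^{p^\ast}(\R^n)$, and log-convexity of $L^q$ norms to produce the exponent $\tfrac{n}{\alpha r}$. Your verification that $q$ ranges over $[p,p^\ast]$ as $r$ ranges over $[\tfrac n\alpha,+\infty]$ is correct.

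The approximation step, however, contains a genuine gap. You propose to approximate $g$ by $C^\infty_c$ functions $(g_k)$ obtained by mollification \emph{and truncation}, converging weakly-$\ast$ in $L^\infty$ and strongly in $L^p_{\loc}$, and then claim that the linear term $f_k\nabla^\alpha g_k$ converges \emph{strongly} in $L^p(\R^n;\R^n)$ ``by continuity of the bilinear bounds.'' This does not follow. Writing $f_k\nabla^\alpha g_k-f\nabla^\alpha g=(f_k-f)\nabla^\alpha g_k+f(\nabla^\alpha g_k-\nabla^\alpha g)$, the first term is controlled by your bilinear estimate together with $\|f_k-f\|_{L^q}\to0$, but the second is bounded by $\|f\|_{L^q}[g_k-g]_{B^\alpha_{r,1}}$, and you have conceded (correctly) that there is no convergence in the homogeneous Besov seminorm: $C^\infty_c$ is not dense in $b^\alpha_{r,1}$. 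So the very information your scheme produces on $(g_k)$ (weak-$\ast$ in $L^\infty$, strong in $L^p_{\loc}$, uniform boundedness of the seminorm) is not enough for your claimed strong $L^p$ convergence. Even establishing that the uniform seminorm bound survives the truncation requires a separate argument (the scaling $[\eta_R]_{B^\alpha_{r,1}}\sim R^{\frac nr-\alpha}$ works out only because $r\ge\tfrac n\alpha$, and you don't mention this), and in the borderline case $r=\tfrac n\alpha$ the truncation error in $\nabla^\alpha g_k$ has constant $L^{n/\alpha}$ norm, so strong $L^r$ convergence of $\nabla^\alpha g_k$ fails.

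The paper circumvents all of this by decoupling the two approximations and, crucially, by \emph{never truncating $g$}. In Step 1 one fixes $f\in C^\infty_c(\R^n)$ and sets $g_\eps=g*\rho_\eps$, which lies in $\Lip_b(\R^n)\cap b^\alpha_{r,1}(\R^n)\subset B^\alpha_{\infty,1}(\R^n)$ (not compactly supported, and that is fine). One then applies the Leibniz rule for $\div^\alpha$ of \cref{res:nabla_alpha_div_alpha_leibniz_besov} to $\div^\alpha(g_\eps\phi)$ with $\phi\in\Lip_c$; since $g_\eps\phi\in\Lip_c$, the integration by parts against $f$ is the elementary one. The passage $\eps\to0^+$ is then justified by the uniform bounds $\|\nabla^\alpha g_\eps\|_{L^r}\le\mu_{n,\alpha}[g]_{B^\alpha_{r,1}}$ and $\|\nabla^\alpha_{\NL}(f,g_\eps)\|_{L^p}\lesssim\|f\|_{L^{p_r}}[g]_{B^\alpha_{r,1}}$, extracting \emph{weak} limits in $L^r$ and $L^p$ respectively and identifying them as $\nabla^\alpha g$ and $\nabla^\alpha_{\NL,w}(f,g)$ via the duality identities of \cref{res:div_nabla_NL_ibp} and \cref{res:nabla_div_NL_swap}; strong convergence of the linear terms is never invoked. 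In Step 2 the general $f\in S^{\alpha,p}$ is treated by a separate approximation $(f_k)\subset C^\infty_c$ converging in $S^{\alpha,p}$, using the $L^{p_r}$ bound and lower semicontinuity of the $L^p$ norm under weak convergence. So your proposal has the right estimate and the right intuition, but the approximation scheme you sketch for the identity \eqref{eq:Leibniz_Bessel_bounded_Besov} would need to drop the truncation of $g$, replace the claimed strong $L^p$ convergence of the linear terms by weak convergence against test vector fields, and separate the approximation of $f$ from that of $g$.
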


It is worth noting that an analogue of \cref{res:Leibniz_Bessel_bounded_Besov} can be stated for $BV^{\alpha,p}$ functions, with $p\in\left[1,\frac n{n-\alpha}\right)$.
We refer the interested reader to \cref{res:Leibniz_frac_BV_bounded_Besov} below for the precise statement.

\subsection{Organization of the paper}

The paper is organized as follows.

In \cref{sec:preliminaries}, we recall the main notation used throughout the paper and we prove several basic results for the involved fractional differential operators in the framework of Besov spaces.

In \cref{sec:leibniz_BV_alpha_p}, we establish our first main result \cref{resi:leibniz_BV_alpha_p}, proving each case in~\eqref{eqi:cases_BV_alpha_p} separately, see \cref{res:BV_alpha_p_leibniz_C_b},
\cref{res:giovannino},
\cref{res:BV_alpha_p_leibniz_Sob_embedding}, 
\cref{res:remove_C_p_finite} and 
\cref{res:remove_C_p_infty}, 
respectively.
We also discuss some strictly related Gauss--Green formulas, see \cref{subsec:GG_formulas}.

In \cref{sec:leibniz_bessel}, after having settled inequality~\eqref{eqi:frac_nabla_NL_Lp_bound}, we show  \cref{resi:lebniz_bessel_conjugate} and \cref{resi:lebniz_bessel_bounded}.

Finally, in \cref{sec:application}, we first prove \cref{res:Leibniz_Bessel_bounded_Besov} and then we conclude our paper by establishing \cref{res:energy_est} and \cref{res:boundary_problem}.

\section{Preliminaries}
\label{sec:preliminaries}

\subsection{General notation}
\label{subsec:notation}

We start with a brief description of the main notation used in this paper. In order to keep the exposition the most reader-friendly as possible, we retain the same notation adopted in the previous works~\cites{CS19,CS19-2,BCCS20,CSS21}.

Given an open set $\Omega\subset\R^n$, we say that a set $E$ is compactly contained in $\Omega$, and we write \mbox{$E\Subset\Omega$}, if the $\closure{E}$ is compact and contained in $\Omega$.
We let $\Leb{n}$ and $\Haus{\alpha}$ be the $n$-dimensional Lebesgue measure and the $\alpha$-dimensional Hausdorff measure on $\R^n$, respectively, with $\alpha\in[0,n]$. 
Unless otherwise stated, a measurable set is a $\Leb{n}$-measurable set. 
We also use the notation $|E|=\Leb{n}(E)$. 
All functions we consider in this paper are Lebesgue measurable, unless otherwise stated. 
We denote by $B_r(x)$ the standard open Euclidean ball with center $x\in\R^n$ and radius $r>0$. 
We let $B_r=B_r(0)$. 
Recall that $\omega_{n} = |B_1|=\pi^{\frac{n}{2}}/\Gamma\left(\frac{n+2}{2}\right)$ and $\Haus{n-1}(\partial B_{1}) = n \omega_n$, where $\Gamma$ is Euler's \emph{Gamma function}.

For $k \in \N_{0} \cup \set{+ \infty}$ and $m \in \N$, we let $C^{k}_{c}(\Omega ; \R^{m})$ and $\Lip_c(\Omega; \R^{m})$ be the spaces of $C^{k}$-regular and, respectively, Lipschitz-regular, $m$-vector-valued functions defined on~$\R^n$ with compact support in the open set~$\Omega\subset\R^n$. 
Analogously, we let $C^{k}_{b}(\Omega ; \R^{m})$ and $\Lip_b(\Omega; \R^{m})$ be the spaces of $C^{k}$-regular and, respectively, Lipschitz-regular, $m$-vector-valued bounded functions defined on the open set $\Omega\subset\R^n$. 
In the case $k = 0$, we drop the superscript and simply write $C_{c}(\Omega ; \R^{m})$ and $C_{b}(\Omega ; \R^{m})$.

For $m\in\N$, the total variation on~$\Omega$ of the $m$-vector-valued Radon measure $\mu$ is defined as
\begin{equation*}
|\mu|(\Omega)
=
\sup\set*{\int_\Omega\phi\cdot d\mu : \phi\in C^\infty_c(\Omega;\R^m),\ \|\phi\|_{L^\infty(\Omega;\,\R^m)}\le1}.
\end{equation*}
We thus let $\M(\Omega;\R^m)$ be the space of $m$-vector-valued Radon measure  with finite total variation on $\Omega$.
We say that $(\mu_k)_{k\in\N}\subset\M(\Omega;\R^m)$ \emph{weakly converges} to $\mu\in\M (\Omega;\R^m)$, and we write $\mu_k\weakto\mu$ in $\M (\Omega;\R^m)$ as $k\to+\infty$, if 
\begin{equation}\label{eq:def_weak_conv_meas}
\lim_{k\to+\infty}\int_\Omega\phi\cdot d\mu_k=\int_\Omega\phi\cdot d\mu
\end{equation} 
for all $\phi\in C_c(\Omega;\R^m)$. Note that we make a little abuse of terminology, since the limit in~\eqref{eq:def_weak_conv_meas} actually defines the \emph{weak*-convergence} in~$\M (\Omega;\R^m)$. 

For any exponent $p\in[1,+\infty]$, we let $L^p(\Omega;\R^m)$ be the space of $m$-vector-valued Lebesgue $p$-integrable functions on~$\Omega$.

We let
\begin{equation*}
W^{1,p}(\Omega;\R^m)
=
\set*{u\in L^p(\Omega;\R^m) : [u]_{W^{1,p}(\Omega;\,\R^m)}=\|\nabla u\|_{L^p(\Omega;\,\R^{nm})}<+\infty}
\end{equation*}
be the space of $m$-vector-valued Sobolev functions on~$\Omega$, for instance see~\cite{Leoni17}*{Chapter~11} for its precise definition and main properties. 
We let
\begin{equation*}
BV(\Omega;\R^m)
=
\set*{u\in L^1(\Omega;\R^m) : [u]_{BV(\Omega;\,\R^m)}=|Du|(\Omega)<+\infty}
\end{equation*}
be the space of $m$-vector-valued functions of bounded variation on~$\Omega$, see for instance~\cite{AFP00}*{Chapter~3} or~\cite{EG15}*{Chapter~5} for its precise definition and main properties. 
For $\alpha\in(0,1)$ and $p\in[1,+\infty)$, we let
\begin{equation*}
W^{\alpha,p}(\Omega;\R^m)
=\set*{u\in L^p(\Omega;\R^m) : [u]_{W^{\alpha,p}(\Omega;\,\R^m)}=\left(\int_\Omega\int_\Omega\frac{|u(x)-u(y)|^p}{|x-y|^{n+p\alpha}}\,dx\,dy\right)^{\frac{1}{p}}\!<+\infty}
\end{equation*}
be the space of $m$-vector-valued fractional Sobolev functions on~$\Omega$, see~\cite{DiNPV12} for its precise definition and main properties. 
For $\alpha\in(0,1)$ and $p=+\infty$, we simply let
\begin{equation*}
W^{\alpha,\infty}(\Omega;\R^m)=\set*{u\in L^\infty(\Omega;\R^m) : \sup_{x,y\in \Omega,\, x\neq y}\frac{|u(x)-u(y)|}{|x-y|^\alpha}<+\infty},
\end{equation*}
so that $W^{\alpha,\infty}(\Omega;\R^m)=C^{0,\alpha}_b(\Omega;\R^m)$, the space of $m$-vector-valued bounded $\alpha$-H\"older continuous functions on~$\Omega$.

For $\alpha\in(0,1)$ and $p,q\in[1,+\infty]$, we let
\begin{equation*}
B^\alpha_{p,q}(\R^n;\R^m)
=
\set*{
u\in L^p(\R^n;\R^m)
:
[u]_{B^\alpha_{p,q}(\R^n;\,\R^m)}
<
+\infty
}
\end{equation*}
be the space of $m$-vector-valued Besov functions on $\R^n$, see~\cite{Leoni17}*{Chapter~17} for its precise definitions and main properties, where 
\begin{equation*}
[u]_{B^\alpha_{p,q}(\R^n;\,\R^m)}
=
\begin{cases}
\left(
\displaystyle\int_{\R^n}
\frac{\|u(\cdot+h)-u\|_{L^p(\R^n;\,\R^m)}^q}{|h|^{n+q\alpha}}
\,dh
\right)^{\frac1q}
&
\text{if}\
q\in[1,+\infty),
\\[10mm]
\sup\limits_{h\in\R^n\setminus\set*{0}}
\dfrac{\|u(\cdot+h)-u\|_{L^p(\R^n;\,\R^m)}}{|h|^{\alpha}}
&
\text{if}\
q=\infty.
\end{cases}
\end{equation*} 
We also let
\begin{equation*}
b^\alpha_{p,q}(\R^n;\R^m)
=
\set*{
u\in L^1_{\loc}(\R^n;\R^m)
:
[u]_{B^\alpha_{p,q}(\R^n;\,\R^m)}
<
+\infty
}.
\end{equation*}

In order to avoid heavy notation, if the elements of a function space $F(\Omega;\R^m)$ are real-valued (i.e., $m=1$), then we will drop the target space and simply write~$F(\Omega)$.


Given $\alpha\in(0,1)$, we also let
\begin{equation}\label{eq:def_frac_Laplacian}
(- \Delta)^{\frac{\alpha}{2}} f(x) 
= 
\nu_{n, \alpha}
\int_{\R^n} \frac{f(x + y) - f(x)}{|y|^{n + \alpha}}\,dy,
\quad
x\in\R^n,
\end{equation}
be the fractional Laplacian (of order~$\alpha$) of $f \in\Lip_b(\R^{n};\R^m)$, where
\begin{equation*}
\nu_{n,\alpha}=2^\alpha\pi^{-\frac n2}\frac{\Gamma\left(\frac{n+\alpha}{2}\right)}{\Gamma\left(-\frac{\alpha}{2}\right)},
\quad
\alpha\in(0,1).
\end{equation*}

Finally, we let
\begin{equation}\label{eq:def_Riesz_transform}
R f(x)
=
\pi^{-\frac{n+1}2}\,\Gamma\left(\tfrac{n+1}{2}\right)\,\lim_{\eps\to0^+}\int_{\set*{|y|>\eps}}\frac{y\,f(x+y)}{|y|^{n+1}}\,dy,
\quad
x\in\R^n,
\end{equation}
be the (vector-valued) \emph{Riesz transform} of a (sufficiently regular) function~$f$. 
We refer the reader to~\cite{G14-M}*{Sections~2.1 and~2.4.4}, \cite{S70}*{Chapter~III, Section~1} and~\cite{S93}*{Chapter~III} for a more detailed exposition. 
We warn the reader that the definition in~\eqref{eq:def_Riesz_transform} agrees with the one in~\cites{S93} and differs from the one in~\cites{G14-M,S70} for a minus sign.
The Riesz transform~\eqref{eq:def_Riesz_transform} is a singular integral of convolution type, thus in particular it defines a continuous operator $R\colon L^p(\R^n)\to L^p(\R^n;\R^{n})$ for any given $p\in(1,+\infty)$, see~\cite{G14-C}*{Corollary~5.2.8}.
We also recall that its components $R_i$ satisfy
\begin{equation*}
\sum_{i=1}^nR_i^2=-\mathrm{Id}
\quad
\text{on}\ L^2(\R^n),
\end{equation*}
see~\cite{G14-C}*{Proposition~5.1.16}.

\subsection{The operators \texorpdfstring{$\nabla^\alpha$}{nablaˆalpha} and \texorpdfstring{$\div^\alpha$}{divˆalpha} and the associated fractional spaces} 

We are going to briefly revise the theory on the non-local operators~$\nabla^\alpha$ and~$\div^\alpha$, see~\cites{Sil19,CS19,CS19-2,BCCS20,CSS21} and~\cite{P16}*{Section~15.2}.

Let $\alpha\in(0,1)$ and set 
\begin{equation*}
\mu_{n, \alpha} 
= 
2^{\alpha}\, \pi^{- \frac{n}{2}}\, \frac{\Gamma\left ( \frac{n + \alpha + 1}{2} \right )}{\Gamma\left ( \frac{1 - \alpha}{2} \right )}.
\end{equation*}
Recall that fractional operators $\nabla^\alpha$ and $\div^\alpha$ are defined by setting
\begin{equation}
\label{eq:def_nabla_alpha}
\nabla^\alpha f(x)
= 
\mu_{n,\alpha}
\int_{\R^n}\frac{(y-x)(f(y)-f(x))}{|y-x|^{n+\alpha+1}}\,dy,
\quad
x\in\R^n,
\end{equation}
for $f\in\Lip_c(\R^n)$ and
\begin{equation}
\label{eq:def_div_alpha}
\div^\alpha \phi(x)
=
\mu_{n,\alpha}
\int_{\R^n}\frac{(y-x)\cdot(\phi(y)-\phi(x))}{|y-x|^{n+\alpha+1}}\,dy,
\quad
x\in\R^n,
\end{equation}
for $\phi\in\Lip_c(\R^n;\R^n)$.
Thanks to~\cite{CS19}*{Corollary~2.3}, we know that 
\begin{equation*}
\nabla^\alpha
\colon
\Lip_c(\R^n)
\to
L^1(\R^n;\R^n)
\cap
L^\infty(\R^n;\R^n)
\end{equation*}
and, analogously, 
\begin{equation*}
\div^\alpha
\colon
\Lip_c(\R^n;\R^n)
\to
L^1(\R^n)
\cap
L^\infty(\R^n),
\end{equation*}
are two linear continuous operators. 
In addition, the fractional operators $\nabla^\alpha$ and $\div^\alpha$ are \emph{dual}, in the sense that
\begin{equation}
\label{eq:ibp_Besov}
\int_{\R^n} f\,\div^\alpha\phi\,dx
=-
\int_{\R^n}\phi\cdot\nabla^\alpha f\,dx,
\end{equation}
for all $f\in\Lip_c(\R^n)$ and $\phi\in\Lip_c(\R^n;\R^n)$, see~\cite{Sil19}*{Section~6} and~\cite{CS19}*{Lemma~2.5}.

We can now quickly recall the functional spaces naturally induced by the two fractional operators~\eqref{eq:def_nabla_alpha} and~\eqref{eq:def_div_alpha}, see~\cites{CS19,CS19-2,CSS21} for a detailed exposition.
 
Let $p\in[1,+\infty]$.
We say that a function $f\in L^p(\R^n)$ belongs to the space $BV^{\alpha,p}(\R^n)$ if
\begin{equation}
\label{eq:def_frac_var_p}
|D^\alpha f|(\R^n)
=
\sup\set*{\int_{\R^n}f\,\div^\alpha\phi\,dx : \phi\in C^\infty_c(\R^n;\R^n),\ \|\phi\|_{L^\infty(\R^n;\,\R^n)}\le1}
<+\infty.
\end{equation}	
In the case $p=1$, we simply write $BV^{\alpha,1}(\R^n)=BV^\alpha(\R^n)$.
The resulting linear space 
\begin{equation*}
BV^{\alpha,p}(\R^n)=
\set*{f\in L^p(\R^n) : |D^\alpha f|(\R^n)<+\infty}
\end{equation*}
endowed with the norm
\begin{equation*}
\|f\|_{BV^{\alpha,p}(\R^n)}
=
\|f\|_{L^p(\R^n)}+|D^\alpha f|(\R^n),
\quad
f\in BV^{\alpha,p}(\R^n),
\end{equation*}
is a Banach space and the fractional variation defined in~\eqref{eq:def_frac_var_p} is lower semicontinuous with respect to the $L^p$-convergence. 
Arguing as in the proof of~\cite{CS19}*{Theorem 3.2}, it is possible to show that a function $f \in L^p(\R^n)$ belongs to $BV^{\alpha, p}(\R^n)$ if and only if there exists a finite vector valued Radon measure $D^{\alpha} f$ satisfying 
\begin{equation*}
\int_{\R^n} f \, \div^{\alpha} \phi \, dx = - \int_{\R^n} \phi \cdot d D^{\alpha} f
\end{equation*}
for all $\phi \in C^{\infty}_c(\R^n; \R^n)$, see \cite{CSS21}*{Theorem 3.1}. 
As in~\cite{CSS21}*{Proposition 3.7}, the vector fields~$\phi$ can actually be taken in a wider class depending on the value of $p$.

In a similar fashion, we say that a function $f\in L^p(\R^n)$ belongs to the space $S^{\alpha,p}(\R^n)$ if there exists a function $\nabla^\alpha f\in L^p(\R^n;\R^n)$, called \emph{weak $\alpha$-fractional gradient} of~$f$, satisfying~\eqref{eq:ibp_Besov} for all $\phi\in C^\infty_c(\R^n;\R^n)$. 
We point out that, in general, if $f \in S^{\alpha,p}(\R^n)$, then $\nabla^\alpha f$ is not necessarily given by \eqref{eq:def_nabla_alpha}.
The resulting linear space
\begin{equation*}
S^{\alpha,p}(\R^n)
=
\set*{f\in L^p(\R^n): \nabla^\alpha f\in L^p(\R^n;\R^n)}
\end{equation*}
endowed with the norm
\begin{equation*}
\|f\|_{S^{\alpha,p}(\R^n)}
=
\|f\|_{L^p(\R^n)}
+
\|\nabla^\alpha f\|_{L^p(\R^n;\,\R^n)},
\quad
f\in S^{\alpha,p}(\R^n),
\end{equation*}  
is a Banach space. 
In addition, $S^{\alpha,p}(\R^n)$ can be identified with the Bessel potential space $L^{\alpha,p}(\R^n)$ whenever $p\in(1,+\infty)$, see~\cites{BCCS20,KS21}. 

\subsection{The operators \texorpdfstring{$\mathcal D^\alpha$}{Dˆalpha} and \texorpdfstring{$\mathcal D^\alpha_{\rm NL}$}{Dˆalpha-NL} on Besov spaces}

We are now going to study the fractional operators~\eqref{eq:def_nabla_alpha} ad~\eqref{eq:def_div_alpha} on Besov spaces.
To this aim, we need to introduce two following auxiliary operators. 

Let $\alpha\in(0,1)$. 
We let
\begin{equation*}
\mathcal D^\alpha f(x)
=
\int_{\R^n}\frac{|f(x+h)-f(x)|}{|h|^{n+\alpha}}\,dh,
\quad
x\in\R^n,
\end{equation*}
for all $f\in\Lip_c(\R^n;\R^m)$, $m\in\N$.
Analogously, we let 
\begin{equation}
\label{eq:def_D_alpha_NL}
\mathcal D_{\rm NL}^\alpha(f,g)(x)
=
\int_{\R^n}
\frac{|f(x+h)-f(x)|\,|g(x+h)-g(x)|}{|h|^{n+\alpha}}
\,dh,
\quad x\in\R^n,
\end{equation}
for all $f,g\in\Lip_c(\R^n;\R^m)$, $m\in\N$.
The following result proves that the operators~$\mathcal D^\alpha$ and~$\mathcal D^\alpha_{\NL}$ naturally extend to Besov spaces.

\begin{lemma}[Integrability of $\mathcal D^\alpha$ and $\mathcal D_{\rm NL}^\alpha$]
\label{res:D_alpha_D_alpha_NL_int}
Let $\alpha,\beta,\gamma\in(0,1)$ be such that $\alpha=\beta+\gamma$.
Let $p,q,r,s,t\in[1,+\infty]$ be such that $\frac1p+\frac1q=\frac1r$ and $\frac1s+\frac1t=1$.

\begin{enumerate}[(i)]

\item
\label{item:D_alpha_int}
The operator
$\mathcal D^\alpha$ satisfies
\begin{equation}
\label{eq:D_alpha_int}
\|\mathcal D^\alpha f\|_{L^p(\R^n)}
\le
[f]_{B^\alpha_{p,1}(\R^n)}
\end{equation}
for all $f\in b^\alpha_{p,1}(\R^n)$.

\item
\label{item:D_alpha_NL_int} 
The operator
$\mathcal D^\alpha_{\rm NL}$ satisfies
\begin{equation}
\label{eq:D_alpha_NL_int}
\|
\mathcal D^\alpha_{\NL}(f,g)
\|_{L^r(\R^n)}
\le
\begin{cases}
[f]_{B^{\beta}_{p,s}(\R^n)}
\,
[g]_{B^{\gamma}_{q,t}(\R^n)}
&
\text{
for $f\in b^{\beta}_{p,s}(\R^n)$, $g\in b^{\gamma}_{q,t}(\R^n)$,
}
\\[2mm]
2
\,
\|f\|_{L^p(\R^n)}
\,
[g]_{B^\alpha_{q,1}(\R^n)}	
&
\text{
for $f\in L^p(\R^n)$, 
$g\in b^{\alpha}_{q,1}(\R^n)$,
}
\\[2mm]
2
\,
[f]_{B^\alpha_{p,1}(\R^n)}	
\,
\|g\|_{L^q(\R^n)}
&
\text{
for 
$f\in b^{\alpha}_{p,1}(\R^n)$,
$g\in L^q(\R^n)$.
}
\end{cases}
\end{equation}
\end{enumerate}
\end{lemma}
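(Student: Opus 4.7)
The backbone of both parts is Minkowski's integral inequality, applied to move the $L^r_x$ (or $L^p_x$) norm inside the $dh$ integral, followed by a Hölder estimate in the spatial variable to separate the two factors.

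For part \eqref{item:D_alpha_int}, the plan is essentially a one-line computation. Viewing $\mathcal D^\alpha f(x)$ as the $dh$-integral of the nonnegative integrand $|f(x+h)-f(x)|/|h|^{n+\alpha}$ and applying Minkowski's integral inequality (e.g.\ \cite{G14-C}*{Exercise~1.1.6}) gives
\begin{equation*}
\|\mathcal D^\alpha f\|_{L^p(\R^n)}
\le
\int_{\R^n}
\frac{\|f(\cdot+h)-f\|_{L^p(\R^n)}}{|h|^{n+\alpha}}\,dh
=
[f]_{B^\alpha_{p,1}(\R^n)},
\end{equation*}
which is precisely \eqref{eq:D_alpha_int}.

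For the first bound in \eqref{eq:D_alpha_NL_int}, I would first apply Minkowski's inequality with exponent $r$ in $x$, obtaining an upper bound of the form
\begin{equation*}
\int_{\R^n}\frac{\big\| |f(\cdot+h)-f|\,|g(\cdot+h)-g|\big\|_{L^r(\R^n)}}{|h|^{n+\alpha}}\,dh.
\end{equation*}
Next, I would apply Hölder's inequality in $x$ with exponents $p$ and $q$ (which is legitimate since $\tfrac1p+\tfrac1q=\tfrac1r$) to get
\begin{equation*}
\le
\int_{\R^n}\frac{\|f(\cdot+h)-f\|_{L^p(\R^n)}\,\|g(\cdot+h)-g\|_{L^q(\R^n)}}{|h|^{n+\alpha}}\,dh.
\end{equation*}
Finally, splitting the weight as $|h|^{-n-\alpha}=|h|^{-n/s-\beta}\cdot |h|^{-n/t-\gamma}$ (which balances correctly thanks to $\beta+\gamma=\alpha$ and $\tfrac1s+\tfrac1t=1$) and applying Hölder's inequality in $h$ with exponents $s$ and $t$ yields the product $[f]_{B^\beta_{p,s}}\,[g]_{B^\gamma_{q,t}}$.

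The remaining two bounds in \eqref{eq:D_alpha_NL_int} follow along the same lines: after the Minkowski/Hölder-in-$x$ step above, I would instead use the trivial pointwise bound $|f(x+h)-f(x)|\le |f(x+h)|+|f(x)|$ (respectively on $g$) before taking $L^p_x$ (resp.\ $L^q_x$) norms, which by translation invariance produces the factor $2\|f\|_{L^p(\R^n)}$ (resp.\ $2\|g\|_{L^q(\R^n)}$) in front of the remaining single $dh$-integral, the latter being exactly $[g]_{B^\alpha_{q,1}}$ (resp.\ $[f]_{B^\alpha_{p,1}}$). There is no real obstacle here; the only minor point of care is to verify that the exponent bookkeeping in the splitting $|h|^{-n-\alpha}=|h|^{-n/s-\beta}|h|^{-n/t-\gamma}$ is consistent, which it is under the stated relations between $\alpha,\beta,\gamma$ and $s,t$.
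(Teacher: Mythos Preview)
Your proof is correct and follows essentially the same approach as the paper: Minkowski's integral inequality in $x$, then H\"older's inequality (first in $x$ with exponents $p,q$, then in $h$ with exponents $s,t$ after splitting the weight $|h|^{-n-\alpha}=|h|^{-n/s-\beta}|h|^{-n/t-\gamma}$). The paper likewise carries out the first bound in \eqref{eq:D_alpha_NL_int} explicitly and leaves the remaining two as exercises for the reader; your treatment of those via the triangle inequality $\|f(\cdot+h)-f\|_{L^p}\le 2\|f\|_{L^p}$ is exactly the intended argument.
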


\begin{proof}
We prove the two statements separately.

\smallskip

\textit{Proof of~\eqref{item:D_alpha_int} }
By Minkowski's integral inequality, we can estimate
\begin{align*}
\|\mathcal D^\alpha f\|_{L^p(\R^n)}
&=
\bigg\|
\int_{\R^n}\frac{|f(\cdot+h)-f|}{|h|^{n+\alpha}}\,dh
\,
\bigg\|_{L^p(\R^n)}
\\
&\le
\int_{\R^n}\frac{\|f(\cdot+h)-f\|_{L^p(\R^n)}}{|h|^{n+\alpha}}\,dh
\\
&=
[f]_{B^\alpha_{p,1}(\R^n)}
\end{align*}	
and~\eqref{eq:D_alpha_int} follows.

\smallskip

\textit{Proof of~\eqref{item:D_alpha_NL_int}}.
Let us assume that $s,t\in(1,+\infty)$.
By Minkowski's integral inequality and H\"older's (generalized) inequality, we can estimate
\begin{align*}
\|
\mathcal D^\alpha_{\NL}(f,g)
\|_{L^r(\R^n)}
&\le
\int_{\R^n}
\frac{\||f(\cdot+h)-f|\,|g(\cdot+h)-g|\|_{L^r(\R^n)}}{|h|^{n+\alpha}}
\,dh
\\
&\le
\int_{\R^n}
\frac{\|f(\cdot+h)-f\|_{L^p(\R^n)}\,\|g(\cdot+h)-g\|_{L^q(\R^n)}}{|h|^{n+\alpha}}
\,dh
\\
&\le
\int_{\R^n}
\frac{\|f(\cdot+h)-f\|_{L^p(\R^n)}}{|h|^{\frac{n}{s}+\beta}}
\,
\frac{\|g(\cdot+h)-g\|_{L^q(\R^n)}}{|h|^{\frac{n}{t}+\gamma}}
\,dh
\\
&\le
\left(
\int_{\R^n}
\frac{\|f(\cdot+h)-f\|_{L^p(\R^n)}^s}{|h|^{n+s\beta}}
\,dh
\right)^{\frac1s}
\left(
\int_{\R^n}
\frac{\|g(\cdot+h)-g\|_{L^p(\R^n)}^t}{|h|^{n+t\gamma}}
\,dh
\right)^{\frac1t}
\\
&=
[f]_{B^{\beta}_{p,s}(\R^n)}
\,
[g]_{B^{\gamma}_{q,t}(\R^n)}
\end{align*}
and~\eqref{eq:D_alpha_NL_int} follows.
The remaining cases are similar and are left to the reader. 
\end{proof}

In the following result, we prove a Leibinz-type rule for the operator $\mathcal D^\alpha$ for the product of two Besov functions.

\begin{lemma}[Leibniz-type rule for $\mathcal D^\alpha$]
\label{res:D_alpha_product}
Let $\alpha\in(0,1)$ and let $p,q,r\in[1,+\infty]$ be such that $\frac1p+\frac1q=\frac1r$.
If $f\in B^\alpha_{p,1}(\R^n)$ and $g\in B^\alpha_{q,1}(\R^n)$, then $\mathcal D^\alpha(fg)\in L^r(\R^n)$ with
\begin{equation*}
\|\mathcal D^\alpha(fg)\|_{L^r(\R^n)}
\le
[f]_{B^\alpha_{p,1}(\R^n)}\|g\|_{L^q(\R^n)}
+
\|f\|_{L^p(\R^n)}
[g]_{B^\alpha_{q,1}(\R^n)}	
\end{equation*}
\end{lemma}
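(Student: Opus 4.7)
The plan is to exploit the elementary product-rule identity for finite differences
\begin{equation*}
(fg)(x+h)-(fg)(x)=(f(x+h)-f(x))\,g(x+h)+f(x)\,(g(x+h)-g(x)),
\end{equation*}
which after taking absolute values gives the pointwise bound
\begin{equation*}
\mathcal D^\alpha(fg)(x)
\le
\int_{\R^n}\frac{|f(x+h)-f(x)|\,|g(x+h)|}{|h|^{n+\alpha}}\,dh
+
\int_{\R^n}\frac{|f(x)|\,|g(x+h)-g(x)|}{|h|^{n+\alpha}}\,dh.
\end{equation*}
The two summands are structurally symmetric, so I would handle them in the same way.

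For each of the two pieces I would first apply Minkowski's integral inequality in $L^r(\R^n)$ to pull the outer $L^r$-norm inside the integral over $h$, and then apply H\"older's inequality (with exponents $p$ and $q$, which is admissible since $\tfrac1p+\tfrac1q=\tfrac1r$) to separate the two factors. Crucially, by translation invariance of the Lebesgue measure one has $\|g(\cdot+h)\|_{L^q(\R^n)}=\|g\|_{L^q(\R^n)}$, so the first term reduces to
\begin{equation*}
\|g\|_{L^q(\R^n)}
\int_{\R^n}\frac{\|f(\cdot+h)-f\|_{L^p(\R^n)}}{|h|^{n+\alpha}}\,dh
=
\|g\|_{L^q(\R^n)}\,[f]_{B^\alpha_{p,1}(\R^n)}.
\end{equation*}
Symmetrically, the second term is estimated by $\|f\|_{L^p(\R^n)}\,[g]_{B^\alpha_{q,1}(\R^n)}$. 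Summing the two bounds yields the desired inequality.

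There is essentially no serious obstacle here: the argument is the direct analogue, at the level of the operator $\mathcal D^\alpha$, of the one already used in \cref{res:D_alpha_D_alpha_NL_int}\eqref{item:D_alpha_int}. The only point worth flagging is to make sure that the finite-difference product identity is applied before any integration, so that the $L^r$-norm can be split into the two pieces cleanly. Once this is done, Minkowski and H\"older do all the work, and translation invariance of $L^q$ (respectively $L^p$) removes the $h$-dependence from the non-differenced factor, producing exactly the two terms in the statement.
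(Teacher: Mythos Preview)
Your proposal is correct and follows essentially the same approach as the paper: both start from the finite-difference product identity, obtain the same pointwise splitting, and then apply Minkowski's integral inequality together with H\"older's inequality and translation invariance. The only cosmetic difference is that the paper recognizes the second summand as $|f(x)|\,\mathcal D^\alpha g(x)$ and invokes \cref{res:D_alpha_D_alpha_NL_int}\eqref{item:D_alpha_int} for its $L^q$-bound, whereas you treat both summands symmetrically; the underlying estimates are identical.
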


\begin{proof}
For $x\in\R^n$, we can estimate
\begin{align*}
\mathcal D^\alpha(fg)(x)
&=
\int_{\R^n}
\frac{|f(y)g(y)-f(x)g(x)|}{|y-x|^{n+\alpha}}
\,dy
\\
&\le
\int_{\R^n}
\frac{|g(y)|\,|f(y)-f(x)|+|f(x)|\,|g(y)-g(x)|}{|y-x|^{n+\alpha}}
\,dy
\\
&=
\int_{\R^n}
\frac{|g(x+h)|\,|f(x+h)-f(x)|}{|h|^{n+\alpha}}
\,dh
+
|f(x)|\,\mathcal D^\alpha g(x).
\end{align*}
Therefore, by Minkowski's integral inequality and H\"older's (generalized) inequality, we get
\begin{align*}
\|\mathcal D^\alpha(fg)\|_{L^r(\R^n)}
&\le
\int_{\R^n}
\frac{\||g(\cdot+h)|\,|f(\cdot+h)-f|\|_{L^r(\R^n)}}{|h|^{n+\alpha}}
\,dh
+
\|f\,\mathcal D^\alpha g\|_{L^r(\R^n)}
\\
&\le
\int_{\R^n}
\frac{\|g(\cdot+h)\|_{L^q(\R^n)}\,\|f(\cdot+h)-f\|_{L^p(\R^n)}}{|h|^{n+\alpha}}
\,dh
+
\|f\|_{L^p(\R^n)}
\|\mathcal D^\alpha g\|_{L^q(\R^n)}
\\
&\le
[f]_{B^\alpha_{p,1}(\R^n)}\|g\|_{L^q(\R^n)}
+
\|f\|_{L^p(\R^n)}
[g]_{B^\alpha_{q,1}(\R^n)}
\end{align*}
thanks to \cref{res:D_alpha_D_alpha_NL_int}\eqref{item:D_alpha_int} and the conclusion readily follows.
\end{proof}

\subsection{The operators \texorpdfstring{$\nabla^\alpha$}{nablaˆalpha} and \texorpdfstring{$\div^\alpha$}{divˆalpha} on Besov spaces} 

We are now ready to study the properties of the fractional operators~\eqref{eq:def_nabla_alpha} and~\eqref{eq:def_div_alpha} on Besov spaces.
 
We start by observing that from \cref{res:D_alpha_D_alpha_NL_int}\eqref{item:D_alpha_int} we can easily deduce the following integrability result for the operators $\nabla^\alpha$ and $\div^\alpha$ when applied to Besov functions.

\begin{corollary}[Integrability of $\nabla^\alpha$ and $\div^\alpha$]
\label{res:nabla_alpha_div_alpha_in_L_p}
Let $\alpha\in(0,1)$ and $p\in[1,+\infty]$.
The linear operators in~\eqref{eq:def_nabla_alpha} and~\eqref{eq:def_div_alpha} can be extended to two linear operators (for which we retain the same notation) satisfying 
\begin{equation*}
\begin{split}
\|\nabla^\alpha f\|_{L^p(\R^n;\,\R^n)}
&\le
\mu_{n,\alpha}\,
[f]_{B^\alpha_{p,1}(\R^n)}
\\
\|\div^\alpha \phi\|_{L^p(\R^n)}
&\le
\mu_{n,\alpha}\,
[\phi]_{B^\alpha_{p,1}(\R^n;\,\R^n)}
\end{split}
\end{equation*}
for all $f\in b^\alpha_{p,1}(\R^n)$ and $\phi\in b^\alpha_{p,1}(\R^n;\R^n)$.
\end{corollary}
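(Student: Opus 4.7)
The plan is to reduce both bounds to \cref{res:D_alpha_D_alpha_NL_int}\eqref{item:D_alpha_int} via the elementary pointwise estimates
\begin{equation*}
|\nabla^\alpha f(x)| \le \mu_{n,\alpha}\,\mathcal D^\alpha f(x), \qquad |\div^\alpha \phi(x)| \le \mu_{n,\alpha}\,\mathcal D^\alpha \phi(x),
\end{equation*}
which follow at once from the definitions~\eqref{eq:def_nabla_alpha} and~\eqref{eq:def_div_alpha} by bringing the absolute value inside the integral and using $|y-x|\le |y-x|$ on the factor $(y-x)/|y-x|$ of unit length (respectively, the Cauchy--Schwarz inequality applied to $(y-x)\cdot(\phi(y)-\phi(x))/|y-x|$).

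First I would write the extension of $\nabla^\alpha$ as follows. Given $f\in b^\alpha_{p,1}(\R^n)$, \cref{res:D_alpha_D_alpha_NL_int}\eqref{item:D_alpha_int} yields $\mathcal D^\alpha f\in L^p(\R^n)$ with $\|\mathcal D^\alpha f\|_{L^p(\R^n)}\le [f]_{B^\alpha_{p,1}(\R^n)}$. In particular, $\mathcal D^\alpha f(x)<+\infty$ for a.e.\ $x\in\R^n$, so the integral defining $\nabla^\alpha f(x)$ through formula~\eqref{eq:def_nabla_alpha} converges absolutely for a.e.\ $x$. Setting $\nabla^\alpha f(x)=0$ on the negligible set where this fails, we obtain a measurable vector field with $|\nabla^\alpha f(x)|\le\mu_{n,\alpha}\,\mathcal D^\alpha f(x)$ a.e., and hence the desired bound $\|\nabla^\alpha f\|_{L^p(\R^n;\R^n)}\le\mu_{n,\alpha}\,[f]_{B^\alpha_{p,1}(\R^n)}$. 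Linearity of the extension is immediate from linearity of the integral. Since $\Lip_c(\R^n)\subset b^\alpha_{p,1}(\R^n)$ and the new definition reproduces formula~\eqref{eq:def_nabla_alpha} pointwise on $\Lip_c(\R^n)$, this is genuinely an extension of the original operator from~\cite{CS19}*{Corollary~2.3}.

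The argument for $\div^\alpha$ is identical, replacing scalar $f$ by vector-valued $\phi$ and invoking the pointwise bound $|\div^\alpha \phi(x)|\le\mu_{n,\alpha}\,\mathcal D^\alpha \phi(x)$ (understanding $\mathcal D^\alpha \phi$ componentwise or, equivalently, with Euclidean norms in the numerator of~\eqref{eq:def_D_alpha_NL}). I do not anticipate any real obstacle here: the only subtle point is to verify a.e.\ convergence of the singular integral for merely locally integrable $f$, which is handled automatically by the $L^p$ finiteness of $\mathcal D^\alpha f$ established in \cref{res:D_alpha_D_alpha_NL_int}. No density argument is needed, although one could alternatively invoke density of $\Lip_c(\R^n)$ in $b^\alpha_{p,1}(\R^n)$ (for $p<+\infty$) to extend by continuity from the case treated in~\cite{CS19}.
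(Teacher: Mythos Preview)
Your proposal is correct and follows exactly the route the paper intends: the corollary is stated without proof, as an immediate consequence of \cref{res:D_alpha_D_alpha_NL_int}\eqref{item:D_alpha_int} via the pointwise domination $|\nabla^\alpha f|\le\mu_{n,\alpha}\,\mathcal D^\alpha f$ (and analogously for $\div^\alpha$), which is precisely what you wrote out.
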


In addition, the Leibniz-type rule for the operator $\mathcal D^\alpha$ proved in \cref{res:D_alpha_product} allows us to deduce the following integrability property for $\nabla^\alpha$ and $\div^\alpha$ when applied to product of Besov functions.

\begin{corollary}
[Integrability of $\nabla^\alpha$ and $\div^\alpha$ for products]
\label{res:nabla_alpha_div_alpha_product}
Let $\alpha\in(0,1)$ and let $p,q,r\in[1,+\infty]$ be such that $\frac1p+\frac1q=\frac1r$.
\begin{enumerate}[(i)]

\item
\label{item:nabla_alpha_product}
If $f\in B^\alpha_{p,1}(\R^n)$ and $g\in B^\alpha_{q,1}(\R^n)$, then $\nabla^\alpha(fg)\in L^r(\R^n;\R^n)$.
In addition, if $r=1$ then 
\begin{equation}
\label{eq:nabla_alpha_zero_mean}
\int_{\R^n}\nabla^\alpha (fg)\,dx=0.
\end{equation}

\item
\label{item:div_alpha_product}
If $f\in B^\alpha_{p,1}(\R^n)$ and $\phi\in B^\alpha_{q,1}(\R^n;\R^n)$, then $\div^\alpha(f\phi)\in L^r(\R^n)$.
In addition, if $r=1$ then 
\begin{equation}
\label{eq:div_alpha_zero_mean}
\int_{\R^n}\div^\alpha (f\phi)\,dx=0.
\end{equation}
\end{enumerate}
\end{corollary}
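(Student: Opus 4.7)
My strategy is to reduce everything to the scalar Leibniz-type estimate in \cref{res:D_alpha_product} (and to \cref{res:nabla_alpha_div_alpha_in_L_p}) via the pointwise bound
\begin{equation*}
|\nabla^\alpha F(x)|\le \mu_{n,\alpha}\,\mathcal D^\alpha F(x),
\qquad
|\div^\alpha \Phi(x)|\le \mu_{n,\alpha}\,\mathcal D^\alpha \Phi(x),
\end{equation*}
which is obtained by moving the absolute value inside the integrals in \eqref{eq:def_nabla_alpha}--\eqref{eq:def_div_alpha}. For~\eqref{item:nabla_alpha_product}, since $f\in B^\alpha_{p,1}(\R^n)$ and $g\in B^\alpha_{q,1}(\R^n)$, \cref{res:D_alpha_product} gives $\mathcal D^\alpha(fg)\in L^r(\R^n)$; the pointwise bound then forces the principal-value integral defining $\nabla^\alpha(fg)$ to converge absolutely at a.e.\ point, and yields $\nabla^\alpha(fg)\in L^r(\R^n;\R^n)$. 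Part~\eqref{item:div_alpha_product} is identical: one applies the same argument component-wise to the vector-valued product $f\phi$, after observing that the proof of \cref{res:D_alpha_product} carries over verbatim to a scalar times a vector-valued factor.

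For the zero-mean identities~\eqref{eq:nabla_alpha_zero_mean} and~\eqref{eq:div_alpha_zero_mean} in the case $r=1$, I first argue that $fg \in B^\alpha_{1,1}(\R^n)$ (and similarly $f\phi \in B^\alpha_{1,1}(\R^n;\R^n)$). Indeed, Hölder gives $fg\in L^1$, while the translation-difference decomposition
\begin{equation*}
fg(\cdot+h)-fg=f(\cdot+h)\,(g(\cdot+h)-g)+g\,(f(\cdot+h)-f),
\end{equation*}
combined with Hölder and integration against $|h|^{-n-\alpha}$, produces
\begin{equation*}
[fg]_{B^\alpha_{1,1}(\R^n)}
\le
\|f\|_{L^p(\R^n)}\,[g]_{B^\alpha_{q,1}(\R^n)}
+
\|g\|_{L^q(\R^n)}\,[f]_{B^\alpha_{p,1}(\R^n)}
<+\infty.
\end{equation*}
By the (standard) density of $C^\infty_c(\R^n)$ in $B^\alpha_{1,1}(\R^n)$, I pick $(\psi_k)\subset C^\infty_c(\R^n)$ with $\psi_k\to fg$ in $B^\alpha_{1,1}(\R^n)$; by \cref{res:nabla_alpha_div_alpha_in_L_p}, then $\nabla^\alpha\psi_k\to\nabla^\alpha(fg)$ in $L^1(\R^n;\R^n)$. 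Provided I know that $\int_{\R^n}\nabla^\alpha\psi\,dx=0$ for every $\psi\in C^\infty_c(\R^n)$, passing to the limit delivers~\eqref{eq:nabla_alpha_zero_mean}, and an identical reasoning handles~\eqref{eq:div_alpha_zero_mean}.

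It remains to establish the smooth base case, which is the only genuinely non-routine step. To this end I would test the fractional integration-by-parts formula~\eqref{eq:ibp_Besov} against rescaled cutoffs $\psi_R(x)=\eta(x/R)\,e_j$, where $\eta\in C^\infty_c(\R^n)$ satisfies $\eta(0)=1$ and $j\in\{1,\dots,n\}$. A direct change of variables yields the scaling identity
\begin{equation*}
\div^\alpha\psi_R(x)=R^{-\alpha}\,\div^\alpha(\eta\,e_j)(x/R),
\end{equation*}
so that $\|\div^\alpha\psi_R\|_{L^\infty(\R^n)}=O(R^{-\alpha})$ as $R\to+\infty$. Consequently, $\int_{\R^n}\psi\,\div^\alpha\psi_R\,dx\to 0$, whereas dominated convergence (with dominating function $\|\eta\|_{L^\infty}|\nabla^\alpha\psi|\in L^1(\R^n)$) forces $\int_{\R^n}\psi_R\cdot\nabla^\alpha\psi\,dx\to\int_{\R^n}(\nabla^\alpha\psi)_j\,dx$. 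The duality~\eqref{eq:ibp_Besov} then gives $\int_{\R^n}(\nabla^\alpha\psi)_j\,dx=0$ for every $j$, as required; the analogous test against $\psi_R$ scalar shows $\int_{\R^n}\div^\alpha\varphi\,dx=0$ for $\varphi\in C^\infty_c(\R^n;\R^n)$. The main obstacle is precisely this last step: one has to beat the non-local tails at infinity, and the scaling/cutoff argument above is the most natural way to do so without invoking the Fourier multiplier representation of $\nabla^\alpha$.
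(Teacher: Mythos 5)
Your proposal is correct, and its first half follows the paper exactly: both reduce $L^r$-membership of $\nabla^\alpha(fg)$ (resp.\ $\div^\alpha(f\phi)$) to the Leibniz-type bound of \cref{res:D_alpha_product} via the pointwise domination $|\nabla^\alpha F|\le\mu_{n,\alpha}\,\mathcal D^\alpha F$. Where you diverge is the zero-mean identity in the case $r=1$. The paper's proof is much shorter: from $\mathcal D^\alpha(fg)\in L^1(\R^n)$ it deduces (via Fubini) that $fg\in S^{\alpha,1}(\R^n)\subset BV^\alpha(\R^n)$, and then simply invokes \cref{res:zero_total_mesure}, which already records that the total fractional variation of any $BV^\alpha$ function vanishes. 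You instead observe $fg\in B^\alpha_{1,1}(\R^n)$, approximate by $C^\infty_c(\R^n)$ functions (so you need density of $C^\infty_c$ in $B^\alpha_{1,1}$, which is a standard fact), and then prove the smooth base case $\int_{\R^n}\nabla^\alpha\psi\,dx=0$ from scratch by a scaling-cutoff test of the duality formula~\eqref{eq:ibp_Besov}. Your cutoff computation is correct: the scaling $\div^\alpha\psi_R(x)=R^{-\alpha}\div^\alpha(\eta e_j)(x/R)$ checks out, and both limits are justified since $\psi\in L^1$ and $\nabla^\alpha\psi\in L^1$. Both routes are valid; yours is more self-contained (it does not rely on $BV^\alpha$ machinery or the external reference behind \cref{res:zero_total_mesure}), at the price of an additional density input and a few more lines. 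Two minor remarks: the integral defining $\nabla^\alpha(fg)$ is not a principal value here --- the domination by $\mathcal D^\alpha(fg)(x)<+\infty$ a.e.\ already shows the integrand is absolutely integrable; and you should register that $\mathcal D^\alpha(fg)\in L^1$ is literally equivalent (by Tonelli) to $[fg]_{B^\alpha_{1,1}}<+\infty$, so your Besov membership claim is not an extra step but a repackaging of what \cref{res:D_alpha_product} already provides.
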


In the proof of the above \cref{res:nabla_alpha_div_alpha_product}, we need the following consequence of~\cite{CS19-2}*{Proposition~2.7} whose simple proof is left to the reader.

\begin{lemma}[Zero total measure]
\label{res:zero_total_mesure}
Let $\alpha\in(0,1)$. 
If $f\in BV^\alpha(\R^n)$, then $D^\alpha f(\R^n)=0$.
\end{lemma}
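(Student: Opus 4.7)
The plan is to test the distributional definition of $D^\alpha f$ against smooth cut-offs of constant vector fields and pass to the limit. Fix $\eta \in C^\infty_c(\R^n)$ with $0 \le \eta \le 1$ and $\eta \equiv 1$ on $B_1$, set $\eta_R(x) = \eta(x/R)$, and for each standard basis vector $e_i$ consider the admissible test field $\phi_R^{(i)} = \eta_R\,e_i \in C^\infty_c(\R^n;\R^n)$. The distributional integration-by-parts formula valid for $f \in BV^\alpha(\R^n)$ gives
\begin{equation*}
\int_{\R^n} \phi_R^{(i)}\cdot dD^\alpha f \;=\; -\int_{\R^n} f\,\div^\alpha \phi_R^{(i)}\,dx.
\end{equation*}

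First I would handle the left-hand side: since $|\eta_R|\le 1$ and $\eta_R \to 1$ pointwise while $|D^\alpha f|(\R^n)<+\infty$, dominated convergence yields $\int \phi_R^{(i)}\cdot dD^\alpha f \to e_i\cdot D^\alpha f(\R^n)$ as $R\to+\infty$. The key step is then to show that the right-hand side vanishes. Observing that $\div^\alpha(\eta_R e_i)$ coincides with the $i$-th component of $\nabla^\alpha\eta_R$, the job reduces to proving $\nabla^\alpha \eta_R \to 0$ strongly enough to kill the $L^1$ integral against $f$.

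The main (and only) technical step is a scaling identity: a direct change of variables $y = Rz$ in the defining integral~\eqref{eq:def_nabla_alpha} produces
\begin{equation*}
\nabla^\alpha \eta_R(x) \;=\; R^{-\alpha}\,(\nabla^\alpha \eta)(x/R),
\end{equation*}
so that $\|\nabla^\alpha \eta_R\|_{L^\infty(\R^n;\,\R^n)} = R^{-\alpha}\|\nabla^\alpha \eta\|_{L^\infty(\R^n;\,\R^n)}$, where the latter norm is finite since $\eta \in \Lip_c(\R^n)$ and $\nabla^\alpha \colon \Lip_c(\R^n)\to L^1(\R^n;\R^n)\cap L^\infty(\R^n;\R^n)$ is continuous (as recalled in the excerpt). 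Since $f \in L^1(\R^n)$, we get
\begin{equation*}
\left|\int_{\R^n} f\,\div^\alpha \phi_R^{(i)}\,dx\right|
\;\le\; R^{-\alpha}\,\|\nabla^\alpha \eta\|_{L^\infty(\R^n;\,\R^n)}\,\|f\|_{L^1(\R^n)}
\;\xrightarrow[R\to+\infty]{}\; 0.
\end{equation*}

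Combining the two limits yields $e_i\cdot D^\alpha f(\R^n) = 0$ for every $i = 1,\dots,n$, which gives $D^\alpha f(\R^n) = 0$. There is no real obstacle here—the only point requiring care is the scaling identity, which is a one-line change of variables. The argument works precisely because $BV^\alpha=BV^{\alpha,1}$ forces $f\in L^1$, so that the $L^\infty$-decay $R^{-\alpha}$ of $\nabla^\alpha \eta_R$ is enough.
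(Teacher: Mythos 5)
Your proof is correct. The scaling identity $\nabla^\alpha\eta_R(x)=R^{-\alpha}(\nabla^\alpha\eta)(x/R)$ is easily verified by the substitution $y=Rz$, the observation $\div^\alpha(\eta_R\,e_i)=(\nabla^\alpha\eta_R)_i$ is exact, and the two limits (dominated convergence on the measure side, the $R^{-\alpha}$ bound against $\|f\|_{L^1}$ on the other) are handled properly. The paper itself does not write out an argument---it says the lemma is a consequence of \cite{CS19-2}*{Proposition 2.7} with the ``simple proof left to the reader''---so your self-contained cut-off-and-scaling argument is a complete and natural reconstruction of what the authors had in mind.
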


\begin{proof}[Proof of \cref{res:nabla_alpha_div_alpha_product}]
Thanks to \cref{res:D_alpha_product}, we just need to prove~\eqref{eq:nabla_alpha_zero_mean} and~\eqref{eq:div_alpha_zero_mean} in the case $r=1$.
To this aim, let us set $u=fg\in L^1(\R^n)$.
By \cref{res:D_alpha_product}, we know that $\mathcal D^\alpha u\in L^1(\R^n)$. 
Consequently, $u\in S^{\alpha,1}(\R^n)$ by Fubini's Theorem and so~\eqref{eq:nabla_alpha_zero_mean} follows from \cref{res:zero_total_mesure}, since $S^{\alpha,1}(\R^n) \subset BV^{\alpha}(\R^n)$, thanks to \cite{CS19}*{Theorem 3.25}. 
The proof of~\eqref{eq:div_alpha_zero_mean} is similar and is left to the reader.
\end{proof}

Thanks to \cref{res:D_alpha_D_alpha_NL_int}\eqref{item:D_alpha_int} again, we can easily extend the validity of the integration-by-parts formula~\eqref{eq:ibp_Besov} to Besov functions.

\begin{lemma}[Duality for $\nabla^\alpha$ and $\div^\alpha$]
\label{res:nabla_div_ibp_Besov}
Let $\alpha\in(0,1)$ and let $p,q\in[1,+\infty]$ be such that $\frac1p+\frac1q=1$.
If $f\in B^\alpha_{p,1}(\R^n)$ and $\phi\in B^\alpha_{q,1}(\R^n;\R^n)$, then~\eqref{eq:ibp_Besov} holds. As a consequence, $B^\alpha_{p,1}(\R^n)\subset S^{\alpha,p}(\R^n)$ with continuous inclusion.
\end{lemma}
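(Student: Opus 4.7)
The main idea is to derive the identity \eqref{eq:ibp_Besov} from the Leibniz-type fact established in \cref{res:nabla_alpha_div_alpha_product}\eqref{item:div_alpha_product}, namely $\int_{\R^n}\div^\alpha(f\phi)\,dx=0$, via a symmetrization of the defining double integrals. Since $\frac1p+\frac1q=1$, Hölder's inequality combined with \cref{res:nabla_alpha_div_alpha_in_L_p} guarantees that both sides of \eqref{eq:ibp_Besov} are finite real numbers: indeed $\|\div^\alpha\phi\|_{L^q(\R^n)}\le\mu_{n,\alpha}[\phi]_{B^\alpha_{q,1}(\R^n;\R^n)}$ and $\|\nabla^\alpha f\|_{L^p(\R^n;\R^n)}\le\mu_{n,\alpha}[f]_{B^\alpha_{p,1}(\R^n)}$.

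The central step is the following symmetrization. Set
\begin{equation*}
A=\int_{\R^n}f\,\div^\alpha\phi\,dx,\qquad B=\int_{\R^n}\phi\cdot\nabla^\alpha f\,dx.
\end{equation*}
Writing each out and applying Fubini (justified because, by \cref{res:D_alpha_D_alpha_NL_int}\eqref{item:D_alpha_int} and Hölder, $\int|f(x)|\,\mathcal D^\alpha\phi(x)\,dx$ and $\int|\phi(x)|\,\mathcal D^\alpha f(x)\,dx$ are finite), and then exchanging $x\leftrightarrow y$ using the symmetry $\frac{y-x}{|y-x|^{n+\alpha+1}}\cdot(\phi(y)-\phi(x))=\frac{x-y}{|x-y|^{n+\alpha+1}}\cdot(\phi(x)-\phi(y))$, I obtain
\begin{equation*}
A=\mu_{n,\alpha}\int_{\R^n}\!\!\int_{\R^n}\frac{(y-x)\cdot(\phi(y)-\phi(x))}{|y-x|^{n+\alpha+1}}\,f(y)\,dy\,dx,
\end{equation*}
and analogously for $B$ with $f$ and $\phi$ exchanged. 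Adding $A+B$ and expanding the numerator,
\begin{equation*}
(\phi(y)-\phi(x))f(x)+\phi(x)(f(y)-f(x))=\phi(y)f(x)+\phi(x)f(y)-2\phi(x)f(x),
\end{equation*}
then subtracting the zero identity
\begin{equation*}
0=\mu_{n,\alpha}\int_{\R^n}\!\!\int_{\R^n}\frac{(y-x)\cdot(\phi(y)-\phi(x))(f(x)-f(y))}{|y-x|^{n+\alpha+1}}\,dy\,dx
\end{equation*}
(which follows from the two equal expressions for $A$), I arrive at
\begin{equation*}
A+B=\mu_{n,\alpha}\int_{\R^n}\!\!\int_{\R^n}\frac{(y-x)\cdot\bigl(f(y)\phi(y)-f(x)\phi(x)\bigr)}{|y-x|^{n+\alpha+1}}\,dy\,dx=\int_{\R^n}\div^\alpha(f\phi)\,dx.
\end{equation*}

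By \cref{res:D_alpha_product}, $\mathcal D^\alpha(f\phi)\in L^1(\R^n)$, so the above double integral is absolutely convergent and equals $\int\div^\alpha(f\phi)\,dx$ by Fubini. Then \cref{res:nabla_alpha_div_alpha_product}\eqref{item:div_alpha_product}, applied with $r=1$, gives $A+B=0$, which is exactly \eqref{eq:ibp_Besov}. For the continuous inclusion $B^\alpha_{p,1}(\R^n)\subset S^{\alpha,p}(\R^n)$, the function $\nabla^\alpha f$ provided by \cref{res:nabla_alpha_div_alpha_in_L_p} lies in $L^p(\R^n;\R^n)$; the identity just proved shows that it serves as the weak $\alpha$-fractional gradient of $f$ (testing against $\phi\in C^\infty_c(\R^n;\R^n)\subset B^\alpha_{q,1}(\R^n;\R^n)$), and hence $\|f\|_{S^{\alpha,p}(\R^n)}\le\|f\|_{L^p(\R^n)}+\mu_{n,\alpha}[f]_{B^\alpha_{p,1}(\R^n)}$.

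The main technical point to watch is the absolute integrability needed for every Fubini application; this is precisely what the integrability bounds of \cref{res:D_alpha_D_alpha_NL_int}\eqref{item:D_alpha_int} and \cref{res:D_alpha_product} supply under the duality $\frac1p+\frac1q=1$, so no further approximation by smooth compactly supported functions is needed.
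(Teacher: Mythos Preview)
Your proof is correct and follows essentially the same approach as the paper: both reduce the identity to $\int_{\R^n}\div^\alpha(f\phi)\,dx=0$ (\cref{res:nabla_alpha_div_alpha_product}\eqref{item:div_alpha_product}) by writing out the double integrals and symmetrizing, with Fubini justified via \cref{res:D_alpha_D_alpha_NL_int}\eqref{item:D_alpha_int}. The paper's version is slightly cleaner only because it labels the outer integration variable in $B$ as~$y$ rather than~$x$, so that the integrands of $A$ and $B$ telescope directly to $\frac{(x-y)\cdot(f(x)\phi(x)-f(y)\phi(y))}{|x-y|^{n+\alpha+1}}$ without the detour through your ``zero identity''; your extra subtraction step is harmless (its integrand is absolutely integrable by \cref{res:D_alpha_D_alpha_NL_int}\eqref{item:D_alpha_NL_int}) but unnecessary.
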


\begin{proof}
By \cref{res:nabla_alpha_div_alpha_in_L_p}, both sides of~\eqref{eq:ibp_Besov} are well posed. 
Thanks to \cref{res:D_alpha_D_alpha_NL_int}\eqref{item:D_alpha_int} and Fubini's Theorem, we can write
\begin{align*}
\int_{\R^n} f\,\div^\alpha\phi\,dx
&=
\mu_{n,\alpha}
\int_{\R^n}
\int_{\R^n}
f(x)\,
\frac{(y-x)\cdot(\phi(y)-\phi(x))}{|y-x|^{n+\alpha+1}}
\,dy	
\,dx
\\
&=
\mu_{n,\alpha}
\int_{\R^n}
\int_{\R^n}
f(x)\,
\frac{(x-y)\cdot(\phi(x)-\phi(y))}{|x-y|^{n+\alpha+1}}
\,dx	
\,dy
\end{align*}
and, similarly,
\begin{align*}
\int_{\R^n}\phi\cdot\nabla^\alpha f\,dy
=
\mu_{n,\alpha}
\int_{\R^n}
\int_{\R^n}
\phi(y)
\cdot
\frac{(x-y)(f(x)-f(y))}{|x-y|^{n+\alpha+1}}
\,dx
\,dy.
\end{align*}
Therefore, by adding up the two expressions above, we get that
\begin{align*}
\int_{\R^n} f\,\div^\alpha\phi\,dx
+
\int_{\R^n}\phi\cdot\nabla^\alpha f\,dy
&=
\mu_{n,\alpha}
\int_{\R^n}
\int_{\R^n}
\frac{(x-y)\cdot(f(x)\phi(x)-f(y)\phi(y))}{|x-y|^{n+\alpha+1}}
\,dx
\,dy
\\
&=
\int_{\R^n}
\div^\alpha(f\phi)\,dx=0
\end{align*}
thanks to \cref{res:nabla_alpha_div_alpha_product}\eqref{item:div_alpha_product}. 
This shows that, for any $f \in B^\alpha_{p,1}(\R^n)$, the weak $\alpha$-fractional gradient of $f$ is indeed $\nabla^\alpha f$ and it belongs to $L^p(\R^n; \R^n)$. 
This implies the validity of the continuous inclusion $B^\alpha_{p,1}(\R^n)\subset S^{\alpha,p}(\R^n)$. 
The proof is thus complete. 
\end{proof}

\subsection{The operators  \texorpdfstring{$\nabla^\alpha_{\NL}$}{nablaˆalpha-NL} and \texorpdfstring{$\div^\alpha_{\NL}$}{divˆalpha-NL} on Besov spaces}

Let $\alpha\in(0,1)$.
We recall that the non-local fractional operators $\nabla^\alpha_{\rm NL}$ and $\div^\alpha_{\rm NL}$ are defined by
\begin{equation}
\label{eq:def_nabla_alpha_NL}
\nabla^\alpha_{\NL}(f,g)(x)
=
\mu_{n,\alpha}
\int_{\R^n}\frac{(y-x)(f(y)-f(x))(g(y)-g(x))}{|y-x|^{n+\alpha+1}}\,dy,
\quad
x\in\R^n,
\end{equation}
and
\begin{equation}
\label{eq:def_div_alpha_NL}
\div^\alpha_{\NL}(f,\phi)(x)
=
\mu_{n,\alpha}
\int_{\R^n}\frac{(y-x)\cdot(f(y)-f(x))(\phi(y)-\phi(x))}{|y-x|^{n+\alpha+1}}\,dy,
\quad
x\in\R^n,
\end{equation}
for all $f,g\in\Lip_c(\R^n)$ and $\phi\in\Lip_c(\R^n;\R^n)$, see~\cite{CS19}*{Lemmas~2.6 and~2.7}. 
From \cref{res:D_alpha_D_alpha_NL_int}\eqref{item:D_alpha_NL_int}, we immediately deduce the following integrability result for the operators $\nabla^\alpha_{\rm NL}$ and $\div^\alpha_{\rm NL}$ on Besov spaces.

\begin{corollary}[Integrability of $\nabla^\alpha_{\rm NL}$ and $\div^\alpha_{\rm NL}$]
\label{res:nabla_alpha_NL_div_alpha_NL_int}
Let $\alpha,\beta,\gamma\in(0,1)$ be such that $\alpha=\beta+\gamma$.
Let $p,q,r,s,t\in[1,+\infty]$ be such that $\frac1p+\frac1q=\frac1r$ and $\frac1s+\frac1t=1$. 
The bilinear operators in~\eqref{eq:def_nabla_alpha_NL} and~\eqref{eq:def_div_alpha_NL} can be extended to two bilinear operators (for which we retain the same notation) satisfying 
\begin{equation*}
\begin{split}
\|\nabla^\alpha_{\NL}(f,g)\|_{L^r(\R^n;\,\R^n)}
\le
\begin{cases}
\mu_{n,\alpha}
\,
[f]_{B^{\beta}_{p,s}(\R^n)}
\,
[g]_{B^{\gamma}_{q,t}(\R^n)}
&
\text{
for $f\in b^{\beta}_{p,s}(\R^n)$, $g\in b^{\gamma}_{q,t}(\R^n)$,
}
\\[2mm]
2\mu_{n,\alpha}
\,
\|f\|_{L^p(\R^n)}
\,
[g]_{B^\alpha_{q,1}(\R^n)}	
&
\text{
for $f\in L^p(\R^n)$, $g\in b^{\alpha}_{q,1}(\R^n)$,
}
\\[2mm]
2\mu_{n,\alpha}
\,
[f]_{B^\alpha_{p,1}(\R^n)}	
\,
\|g\|_{L^q(\R^n)}
&
\text{
for
$f\in b^{\alpha}_{p,1}(\R^n)$, $g\in L^q(\R^n)$
}
\end{cases}
\end{split}
\end{equation*}
and, analogously,
\begin{equation*}
\begin{split}
\|\div^\alpha_{\NL}(f,\phi)\|_{L^r(\R^n)}
\le
\begin{cases}
\mu_{n,\alpha}
\,
[f]_{B^{\beta}_{p,s}(\R^n)}
\,
[\phi]_{B^{\gamma}_{q,t}(\R^n;\,\R^n)}
&
\text{
for $f\in b^{\beta}_{p,s}(\R^n)$, $\phi\in b^{\gamma}_{q,t}(\R^n;\R^n)$,
}
\\[2mm]
2\mu_{n,\alpha}
\,
\|f\|_{L^p(\R^n)}
\,
[\phi]_{B^\alpha_{q,1}(\R^n;\,\R^n)}	
&
\text{
for $f\in L^p(\R^n)$, $\phi\in b^{\alpha}_{q,1}(\R^n;\R^n)$,
}
\\[2mm]
2\mu_{n,\alpha}
\,
[f]_{B^\alpha_{p,1}(\R^n)}	
\,
\|\phi\|_{L^q(\R^n;\,\R^n)}
&
\text{
for $f\in b^{\alpha}_{p,1}(\R^n)$, $\phi\in L^q(\R^n;\R^n)$.
}
\end{cases}
\end{split}
\end{equation*}
\end{corollary}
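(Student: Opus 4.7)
The plan is to reduce the statement directly to \cref{res:D_alpha_D_alpha_NL_int}\eqref{item:D_alpha_NL_int} via an elementary pointwise bound, avoiding any density/approximation argument.

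First, for $f,g\in\Lip_c(\R^n)$ the definition~\eqref{eq:def_nabla_alpha_NL} makes sense pointwise and the triangle inequality immediately gives
\begin{equation*}
|\nabla^\alpha_{\NL}(f,g)(x)|
\le
\mu_{n,\alpha}\int_{\R^n}\frac{|f(y)-f(x)|\,|g(y)-g(x)|}{|y-x|^{n+\alpha}}\,dy
=
\mu_{n,\alpha}\,\mathcal D^\alpha_{\NL}(f,g)(x)
\end{equation*}
for every $x\in\R^n$, because the factor $|y-x|$ coming from the vector numerator cancels one power in the denominator. The same bound, with $\phi$ in place of $g$ and $|(y-x)\cdot(\phi(y)-\phi(x))|\le|y-x|\,|\phi(y)-\phi(x)|$, holds for $\div^\alpha_{\NL}$.

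Next I would extend the definition. Fix $f$ and $g$ in the Besov/Lebesgue classes appearing in the three cases of the statement. By \cref{res:D_alpha_D_alpha_NL_int}\eqref{item:D_alpha_NL_int}, the function $\mathcal D^\alpha_{\NL}(f,g)$ lies in $L^r(\R^n)$ with the corresponding explicit bound, and in particular it is finite at $\Leb{n}$-a.e.\ $x\in\R^n$. At each such point, the integrand in~\eqref{eq:def_nabla_alpha_NL} is absolutely integrable in $y$ (its modulus is controlled precisely by the integrand of $\mathcal D^\alpha_{\NL}(f,g)(x)$), so the formula~\eqref{eq:def_nabla_alpha_NL} defines $\nabla^\alpha_{\NL}(f,g)(x)$ unambiguously. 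Setting $\nabla^\alpha_{\NL}(f,g)(x)=0$ on the remaining Lebesgue-null set yields a measurable representative. The resulting map is bilinear (a.e.), coincides with the original operator on $\Lip_c(\R^n)\times\Lip_c(\R^n)$, and inherits the pointwise bound by $\mu_{n,\alpha}\,\mathcal D^\alpha_{\NL}(f,g)$ at a.e.\ point, so taking $L^r$ norms and applying \cref{res:D_alpha_D_alpha_NL_int}\eqref{item:D_alpha_NL_int} term by term produces the three stated inequalities.

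The argument for $\div^\alpha_{\NL}$ is verbatim the same, just with the roles of the second argument played by a vector field $\phi$ and the scalar product $(y-x)\cdot(\phi(y)-\phi(x))$ in place of $(y-x)(g(y)-g(x))$; the same pointwise bound by $\mu_{n,\alpha}\,\mathcal D^\alpha_{\NL}(f,\phi)$ (interpreted componentwise inside $\mathcal D^\alpha_{\NL}$) combined with \cref{res:D_alpha_D_alpha_NL_int}\eqref{item:D_alpha_NL_int} closes the proof. There is no real obstacle here: the only thing to be careful about is that the measurable representative of the extended operator is obtained by defining it pointwise on the full-measure set where $\mathcal D^\alpha_{\NL}(f,g)<+\infty$, rather than trying to approximate $f,g$ by $\Lip_c$ functions (which would be unnecessarily delicate in the endpoint cases $p,q,s,t\in\{1,+\infty\}$ for which $\Lip_c$ is not norm-dense in the relevant Besov space).
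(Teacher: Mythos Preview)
Your proposal is correct and matches the paper's approach: the paper states the corollary as an immediate consequence of \cref{res:D_alpha_D_alpha_NL_int}\eqref{item:D_alpha_NL_int}, and you have simply spelled out the natural details---the pointwise domination $|\nabla^\alpha_{\NL}(f,g)|\le\mu_{n,\alpha}\,\mathcal D^\alpha_{\NL}(f,g)$ and the observation that the a.e.\ finiteness of $\mathcal D^\alpha_{\NL}(f,g)$ makes the defining integral absolutely convergent a.e., so the extension is by the same formula rather than by density.
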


\begin{remark}[Particular cases of \cref{res:nabla_alpha_NL_div_alpha_NL_int}]
As far as we know, two particular cases of \cref{res:nabla_alpha_NL_div_alpha_NL_int} are already known in the literature when $\beta=\alpha/s$ and $\gamma=\alpha/t$ (although for more regular functions). 
The case $r=1$, $s=p$ and $t=q$ is considered in~\cite{CS19}*{Lemmas~2.6 and~2.7} and in the more general~\cite{CS19-2}*{Lemmas~2.4 and~2.5}. 
The case $r=p$, $q=+\infty$, $s=+\infty$ and $t=1$ can be recovered from the computations made in~\cite{KS21}*{Section~2.2, Equation~(2.11)}.	
\end{remark}

Somewhat in analogy with the duality relation~\eqref{eq:ibp_Besov} proved in \cref{res:nabla_div_ibp_Besov}, we can establish the following integration-by-parts formula for the operators $\nabla^\alpha_{\NL}$ and $\div^\alpha_{\NL}$ on Besov spaces.

\begin{lemma}[Duality for $\nabla^\alpha_{\NL}$ and $\div^\alpha_{\NL}$]
\label{res:div_nabla_NL_ibp}
Let $\alpha\in(0,1)$ and let $p,q,r\in[1,+\infty]$ be such that $\frac1p+\frac1q+\frac1r=1$. 
If $f\in L^p(\R^n)$, $g\in b^\alpha_{q,1}(\R^n)$ and $\phi\in L^r(\R^n; \R^n)$, then 
\begin{equation} 
\label{eq:div_nabla_NL_ibp}
\int_{\R^n} f\,\div^{\alpha}_{\rm NL}(g,\phi) \, dx 
=
\int_{\R^n} \phi \cdot \nabla^{\alpha}_{\rm NL}(f, g) \, dx.
\end{equation}
\end{lemma}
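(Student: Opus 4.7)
The plan is to exploit the fact that the vector kernel
\begin{equation*}
H(x,y) := \mu_{n,\alpha}\,\frac{(y-x)(g(y)-g(x))}{|y-x|^{n+\alpha+1}}
\end{equation*}
is symmetric under the exchange $x \leftrightarrow y$ (both $(y-x)$ and $(g(y)-g(x))$ change sign, so their product is symmetric). The two sides of \eqref{eq:div_nabla_NL_ibp} are then
\begin{equation*}
I_1 := \int_{\R^n}\!\int_{\R^n} f(x)\,H(x,y)\cdot(\phi(y)-\phi(x))\,dy\,dx,
\quad
I_2 := \int_{\R^n}\!\int_{\R^n} \phi(x)\cdot H(x,y)\,(f(y)-f(x))\,dy\,dx,
\end{equation*}
and the claim should reduce to a Fubini-plus-symmetrization argument.

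The first step is to verify absolute integrability of both double integrands, which will justify every Fubini-type manipulation to follow. Pointwise one has
\begin{equation*}
|f(x)|\,|H(x,y)|\,|\phi(y)-\phi(x)|\le \mu_{n,\alpha}\,\frac{|f(x)|\,|g(y)-g(x)|\,|\phi(y)-\phi(x)|}{|y-x|^{n+\alpha}},
\end{equation*}
and the same bound with $f$ and $\phi$ interchanged controls the integrand of $I_2$. Substituting $y=x+h$ and applying generalized H\"older's inequality in $x$ with exponents $(p,q,r)$, followed by the trivial estimate $\|\psi(\cdot+h)-\psi\|_{L^s}\le 2\|\psi\|_{L^s}$ applied either to $\phi$ or to $f$, yields
\begin{equation*}
\int_{\R^n}\!\int_{\R^n}\frac{|f(x)|\,|g(y)-g(x)|\,|\phi(y)-\phi(x)|}{|y-x|^{n+\alpha}}\,dy\,dx\le 2\,\|f\|_{L^p(\R^n)}\,\|\phi\|_{L^r(\R^n;\,\R^n)}\,[g]_{B^\alpha_{q,1}(\R^n)},
\end{equation*}
and the analogous bound for $I_2$. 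This is the only technical point of the argument, and it is essentially a three-factor variant of the Minkowski-H\"older scheme already used in \cref{res:D_alpha_D_alpha_NL_int}.

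Once absolute integrability is in hand, Fubini's theorem allows one to swap the order of integration freely. Performing the change of variables $x\leftrightarrow y$ in each double integral and using $H(y,x)=H(x,y)$ gives
\begin{equation*}
I_1=-\int_{\R^n}\!\int_{\R^n} f(y)\,H(x,y)\cdot(\phi(y)-\phi(x))\,dy\,dx,
\qquad
I_2=-\int_{\R^n}\!\int_{\R^n} \phi(y)\cdot H(x,y)\,(f(y)-f(x))\,dy\,dx.
\end{equation*}
Averaging each identity with its original expression, and noting that the dot product commutes, one finds that both $I_1$ and $I_2$ equal
\begin{equation*}
-\frac{1}{2}\int_{\R^n}\!\int_{\R^n}(f(x)-f(y))\,H(x,y)\cdot(\phi(x)-\phi(y))\,dy\,dx,
\end{equation*}
so that $I_1=I_2$, which is precisely \eqref{eq:div_nabla_NL_ibp}. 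The main (and essentially only) obstacle is therefore the absolute convergence step; every other manipulation is a purely algebraic use of the kernel symmetry.
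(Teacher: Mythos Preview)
Your proof is correct and follows essentially the same approach as the paper's own proof: both establish absolute integrability of the double integral via H\"older's inequality and the $B^\alpha_{q,1}$ seminorm of $g$, then use Fubini's theorem together with the variable swap $x\leftrightarrow y$ and average the resulting two expressions to obtain the common symmetrized form. Your packaging of the kernel as $H(x,y)$ and explicit observation of its symmetry is a slightly cleaner presentation of the same idea.
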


\begin{proof}
We start by noticing that both sides of~\eqref{eq:div_nabla_NL_ibp} are well defined by \cref{res:nabla_alpha_NL_div_alpha_NL_int} and H\"older's inequality.
Thanks to \cref{res:D_alpha_D_alpha_NL_int} and Fubini's Theorem, we can write 
\begin{align*}
\int_{\R^n} f\,\div^{\alpha}_{\rm NL}(g,\phi) \, dx 
&=
\mu_{n,\alpha}
\int_{\R^n} 
\int_{\R^n}
f(x)\,
\frac{(y-x)\cdot(g(y)-g(x))(\phi(y)-\phi(x))}{|y-x|^{n+\alpha+1}}\,dy\,dx 
\\
&=
\mu_{n,\alpha}
\int_{\R^n} 
\int_{\R^n}
f(y)\,
\frac{(x-y)\cdot(g(x)-g(y))(\phi(x)-\phi(y))}{|x-y|^{n+\alpha+1}}\,dx\,dy 
\end{align*}
and, similarly,
\begin{align*}
\int_{\R^n} \phi \cdot \nabla^{\alpha}_{\rm NL}(f, g) \, dx
&=
\mu_{n,\alpha}
\int_{\R^n} 
\int_{\R^n}
\phi(x)
\cdot
\frac{(y-x)(f(y)-f(x))(g(y)-g(x))}{|y-x|^{n+\alpha+1}}\,dy\,dx 
\\
&=
\mu_{n,\alpha}
\int_{\R^n} 
\int_{\R^n}
\phi(y)
\cdot
\frac{(x-y)(f(x)-f(y))(g(x)-g(y))}{|x-y|^{n+\alpha+1}}\,dx\,dy.
\end{align*}
Therefore, by exchanging the order of integration and adding up, we get
\begin{align*}
2\int_{\R^n} &f\,\div^{\alpha}_{\rm NL}(g,\phi) \, dx
\\
&=
\mu_{n,\alpha}
\int_{\R^n} 
\int_{\R^n}
\frac{(y-x)\cdot(f(x)-f(y))(g(y)-g(x))(\phi(y)-\phi(x))}{|y-x|^{n+\alpha+1}}\,dy\,dx
\\
&=
\mu_{n,\alpha}
\int_{\R^n} 
\int_{\R^n}
\frac{(y-x)\cdot(f(y)-f(x))(g(y)-g(x))(\phi(x)-\phi(y))}{|y-x|^{n+\alpha+1}}\,dy\,dx
\\
&=
2\int_{\R^n} \phi \cdot \nabla^{\alpha}_{\rm NL}(f, g) \, dx
\end{align*}
and the conclusion follows.
\end{proof}

We end this section with the following result showing that, under suitable Besov regularity assumptions on the function~$h$ and the vector field~$\phi$,
the maps
\begin{equation*}
(f,g)
\mapsto
\int_{\R^n} f\,\nabla^\alpha_{\NL}(g,h)\,dx,
\quad
(f,g)
\mapsto
\int_{\R^n} f\,\div^\alpha_{\NL}(g,\phi)\,dx,
\end{equation*}  
are symmetric. 
Up to our knowledge, this property was first noticed in~\cite{KS21}*{Equation~(2.15)} but only for sufficiently regular functions.

\begin{lemma}[Swapping property of $\nabla^\alpha_{\NL}$ and $\div^\alpha_{\NL}$]
\label{res:nabla_div_NL_swap}
Let $\alpha\in(0,1)$ and let $p,q,r\in[1,+\infty]$ be such that $\frac1p+\frac1q+\frac1r=1$.
If $f\in L^p(\R^n)$, $g\in L^q(\R^n)$, $h\in b^\alpha_{r,1}(\R^n)$ and $\phi\in b^\alpha_{r,1}(\R^n;\R^n)$, then
\begin{equation}
\label{eq:nabla_NL_swap}
\int_{\R^n}f\,\nabla^\alpha_{\NL}(g,h)\,dx
=
\int_{\R^n}
g\,\nabla^\alpha_{\NL}(f,h)\,dx
\end{equation}
and
\begin{equation}
\label{eq:div_NL_swap}
\int_{\R^n}f\,\div^\alpha_{\NL}(g,\phi)\,dx
=
\int_{\R^n}g\,\div^\alpha_{\NL}(f,\phi)\,dx.
\end{equation}
\end{lemma}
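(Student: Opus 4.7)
The plan is to exploit the antisymmetry under $(x,y)\leftrightarrow(y,x)$ of the kernel appearing in~\eqref{eq:def_nabla_alpha_NL} together with a symmetrization trick which exposes the symmetry in the pair $(f,g)$.

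First I would justify Fubini's theorem for the double integral
\begin{equation*}
\mu_{n,\alpha}\int_{\R^n}\!\int_{\R^n}
f(x)\,\frac{(y-x)(g(y)-g(x))(h(y)-h(x))}{|y-x|^{n+\alpha+1}}\,dy\,dx.
\end{equation*}
Taking absolute values, H\"older's inequality with exponents $p$ and $p'$, where $\frac1{p'}=\frac1q+\frac1r$, gives
\begin{equation*}
\int_{\R^n}|f|\,\mathcal D^\alpha_{\NL}(g,h)\,dx
\le\|f\|_{L^p(\R^n)}\,\|\mathcal D^\alpha_{\NL}(g,h)\|_{L^{p'}(\R^n)},
\end{equation*}
and the second bound in \cref{res:D_alpha_D_alpha_NL_int}\eqref{item:D_alpha_NL_int}, applied with $g$ and $h$ in place of the lemma's $f$ and $g$ respectively, controls $\|\mathcal D^\alpha_{\NL}(g,h)\|_{L^{p'}}$ by $2\|g\|_{L^q(\R^n)}[h]_{B^\alpha_{r,1}(\R^n)}<+\infty$. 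Hence the double integral converges absolutely.

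Once Fubini is available, setting
\begin{equation*}
K(x,y)=\mu_{n,\alpha}\,\frac{(y-x)(g(y)-g(x))(h(y)-h(x))}{|y-x|^{n+\alpha+1}},
\end{equation*}
I observe that $K(y,x)=-K(x,y)$, since each of the three factors $y-x$, $g(y)-g(x)$ and $h(y)-h(x)$ flips sign under $(x,y)\leftrightarrow(y,x)$. Renaming variables in one half of the integral then yields
\begin{equation*}
\int_{\R^n} f\,\nabla^\alpha_{\NL}(g,h)\,dx
=\frac12\int_{\R^n}\!\int_{\R^n}(f(x)-f(y))\,K(x,y)\,dy\,dx.
\end{equation*}
The antisymmetrized integrand is now manifestly symmetric under the exchange of $f$ and $g$, because $(f(x)-f(y))(g(y)-g(x))$ is invariant under $f\leftrightarrow g$. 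Performing the same computation starting from $\int g\,\nabla^\alpha_{\NL}(f,h)\,dx$ produces the identical double integral, which proves~\eqref{eq:nabla_NL_swap}. Identity~\eqref{eq:div_NL_swap} follows by an entirely analogous argument, with the vector field $\phi$ in place of $h$ and the inner product against $(y-x)$ preserving the antisymmetry of the kernel.

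The only technical obstacle is the verification of absolute integrability needed for Fubini: this is the one place where the Besov regularity of $h$ (or $\phi$) is used, while the rest of the argument is a purely algebraic manipulation of an antisymmetric kernel.
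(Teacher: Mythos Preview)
Your proposal is correct and follows essentially the same approach as the paper's proof: both arguments unfold the double integral, exploit the antisymmetry of the kernel under $(x,y)\leftrightarrow(y,x)$ to symmetrize, and observe that the resulting expression is symmetric in $f$ and $g$. Your justification of the absolute integrability needed for Fubini is in fact slightly more explicit than the paper's (which cites the relevant integrability lemma more tersely), but the substance of the argument is identical.
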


\begin{proof}
Thanks to \cref{res:D_alpha_D_alpha_NL_int}\eqref{item:D_alpha_int}, both sides of~\eqref{eq:nabla_NL_swap} are well defined. 
Moreover, also thanks to Fubini's Theorem, we can write
\begin{align*}
\int_{\R^n}f\,\nabla^\alpha_{\NL}(g,h)\,dx
&= \mu_{n,\alpha}
\int_{\R^n}
\int_{\R^n}
f(x)
\,
\frac{(y-x)(g(y)-g(x))(h(y)-h(x))}{|y-x|^{n+\alpha+1}}
\,dy\,dx
\\
&= 
-\mu_{n,\alpha}
\int_{\R^n}
\int_{\R^n}
f(y)
\,
\frac{(y-x)(g(y)-g(x))(h(y)-h(x))}{|y-x|^{n+\alpha+1}}
\,dy\,dx
\end{align*}
by simply swapping the names of the variables.
An analogous reasoning proves that
\begin{align*}
\int_{\R^n}
g\,\nabla^\alpha_{\NL}(f,h)
\,dx
&= \mu_{n,\alpha}
\int_{\R^n}
\int_{\R^n}
g(x)
\,
\frac{(y-x)(f(y)-f(x))(h(y)-h(x))}{|y-x|^{n+\alpha+1}}
\,dy\,dx
\\
&=
- \mu_{n,\alpha}
\int_{\R^n}
\int_{\R^n}
g(y)
\,
\frac{(y-x)(f(y)-f(x))(h(y)-h(x))}{|y-x|^{n+\alpha+1}}
\,dy\,dx.
\end{align*}
We can thus write
\begin{align*}
2\int_{\R^n}&f\,\nabla^\alpha_{\NL}(g,h)\,dx
\\
&=
- \mu_{n,\alpha}
\int_{\R^n}
\int_{\R^n}
\frac{(y-x)(f(y)-f(x))(g(y)-g(x))(h(y)-h(x))}{|y-x|^{n+\alpha+1}}
\,dx\,dy
\\
&=
2
\int_{\R^n}
g\,\nabla^\alpha_{\NL}(f,h)
\,dx
\end{align*} 
and~\eqref{eq:nabla_NL_swap} follows. 
The proof of~\eqref{eq:div_NL_swap} is similar and is thus left to the reader.  
\end{proof}

\section{Leibniz rules involving \texorpdfstring{$BV^{\alpha,p}$}{BVˆ(alpha,p)} functions}

\label{sec:leibniz_BV_alpha_p}

In this section, we detail the proof of our first main result \cref{resi:leibniz_BV_alpha_p}.

\subsection{Products of \texorpdfstring{$BV^{\alpha,p}$}{BVˆ(alpha,p)} and continuous Besov functions}

We begin with the following Leibniz rule for the multiplication of a $BV^{\alpha,p}$ function with a bounded continuous Besov function.

\begin{theorem}[$BV^{\alpha,p}\cdot C_b\cap B^\alpha_{q,1}$ with $\frac1p+\frac1q=1$]
\label{res:BV_alpha_p_leibniz_C_b}
Let $\alpha\in(0,1)$ and let $p,q\in[1,+\infty]$ be such that $\frac1p+\frac1q=1$.
If $f\in BV^{\alpha, p}(\R^n)$ and $g\in C_b(\R^n)\cap B^\alpha_{q,1}(\R^n)$, then
$fg\in BV^{\alpha,r}(\R^n)$ for all $r\in[1,p]$, with
\begin{equation} 
\label{eq:BV_alpha_p_leibniz_C_b}
D^{\alpha}(fg) 
= 
g\, D^{\alpha} f 
+ 
f\,
\nabla^{\alpha}g\,
\Leb{n} 
+ 
\nabla^{\alpha}_{\rm NL}(f,g)\,\Leb{n}
\quad 
\text{in}\ \M (\R^n; \R^{n}).
\end{equation}
In addition,
\begin{equation}
\label{eq:BV_alpha_p_leibniz_C_b_D_alpha_zero}
D^\alpha(fg)(\R^n)=0,
\qquad
\int_{\R^n}\nabla^\alpha_{\rm NL}(f,g)\,dx=0,
\end{equation}
and 
\begin{equation}
\label{eq:BV_alpha_p_leibniz_C_b_ibp}
\int_{\R^n}
f\,\nabla^\alpha g\,dx
=
-
\int_{\R^n}
g\,d D^\alpha f.
\end{equation}
\end{theorem}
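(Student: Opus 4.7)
The plan has three steps: first verify the integrability of each object appearing in~\eqref{eq:BV_alpha_p_leibniz_C_b}; then establish the identity distributionally by testing against $\phi\in C^\infty_c(\R^n;\R^n)$; and finally deduce~\eqref{eq:BV_alpha_p_leibniz_C_b_D_alpha_zero} and~\eqref{eq:BV_alpha_p_leibniz_C_b_ibp} as consequences. The main technical obstacle will be the admissibility of $g\phi$ as a test vector field for $D^\alpha f$, since $g$ is merely bounded continuous and Besov-regular rather than smooth; this is precisely what the enlarged test class from~\cite{CSS21}*{Proposition~3.7} is designed for.

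For step one, since $B^\alpha_{q,1}(\R^n)\subset L^q(\R^n)$ we have $g\in L^\infty(\R^n)\cap L^q(\R^n)$, so H\"older's inequality combined with standard interpolation between $L^1$ and $L^p$ yields $fg\in L^r(\R^n)$ for every $r\in[1,p]$. The product measure $g\,D^\alpha f$ is a well-defined finite element of $\M(\R^n;\R^n)$ because $g\in L^\infty(\R^n,|D^\alpha f|)$. By~\cref{res:nabla_alpha_div_alpha_in_L_p}, $\nabla^\alpha g\in L^q(\R^n;\R^n)$, so $f\nabla^\alpha g\in L^1(\R^n;\R^n)$ by H\"older, and by~\cref{res:nabla_alpha_NL_div_alpha_NL_int},
\[
\|\nabla^\alpha_{\NL}(f,g)\|_{L^1(\R^n;\,\R^n)}\le 2\mu_{n,\alpha}\,\|f\|_{L^p(\R^n)}\,[g]_{B^\alpha_{q,1}(\R^n)}<+\infty.
\]
Hence the right-hand side of~\eqref{eq:BV_alpha_p_leibniz_C_b} is a well-defined finite Radon measure.

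For step two, I fix $\phi\in C^\infty_c(\R^n;\R^n)$ and use the algebraic identity
\[
g(y)\phi(y)-g(x)\phi(x)=g(x)\big(\phi(y)-\phi(x)\big)+\phi(x)\big(g(y)-g(x)\big)+\big(\phi(y)-\phi(x)\big)\big(g(y)-g(x)\big)
\]
inside the integral defining $\div^\alpha(g\phi)(x)$ to deduce the pointwise Leibniz-type identity
\[
\div^\alpha(g\phi)(x)=g(x)\,\div^\alpha\phi(x)+\phi(x)\cdot\nabla^\alpha g(x)+\div^\alpha_{\NL}(g,\phi)(x)
\]
at a.e.\ $x\in\R^n$, all three summands being finite a.e.\ by~\cref{res:nabla_alpha_div_alpha_in_L_p,res:nabla_alpha_NL_div_alpha_NL_int}. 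Multiplying by $f$, integrating over $\R^n$, and applying~\cref{res:div_nabla_NL_ibp} to the last term produces
\[
\int_{\R^n}fg\,\div^\alpha\phi\,dx=\int_{\R^n}f\,\div^\alpha(g\phi)\,dx-\int_{\R^n}\phi\cdot f\nabla^\alpha g\,dx-\int_{\R^n}\phi\cdot\nabla^\alpha_{\NL}(f,g)\,dx.
\]
Since $g\phi$ is bounded and compactly supported with $[g\phi]_{B^\alpha_{q,1}(\R^n;\,\R^n)}\le\|g\|_{L^\infty}[\phi]_{B^\alpha_{q,1}}+\|\phi\|_{L^\infty}[g]_{B^\alpha_{q,1}}<+\infty$ by Minkowski, the extended class of admissible test vector fields from~\cite{CSS21}*{Proposition~3.7} applies, so the first integral on the right equals $-\int_{\R^n}g\phi\cdot dD^\alpha f$. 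Combining with step one, this delivers~\eqref{eq:BV_alpha_p_leibniz_C_b} as an equality in $\M(\R^n;\R^n)$.

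For step three, $fg\in BV^{\alpha,1}(\R^n)=BV^\alpha(\R^n)$, so~\cref{res:zero_total_mesure} yields $D^\alpha(fg)(\R^n)=0$. To prove $\int_{\R^n}\nabla^\alpha_{\NL}(f,g)\,dx=0$, I approximate $f$ in $L^p(\R^n)$ by $f_k\in C^\infty_c(\R^n)$; the continuity estimate in~\cref{res:nabla_alpha_NL_div_alpha_NL_int} gives $\nabla^\alpha_{\NL}(f_k,g)\to\nabla^\alpha_{\NL}(f,g)$ in $L^1(\R^n;\R^n)$, reducing matters to the smooth case. Fubini's theorem applies because $\mathcal D^\alpha_{\NL}(f_k,g)\in L^1(\R^n)$ by~\cref{res:D_alpha_D_alpha_NL_int}, and the vector-valued kernel
\[
\frac{(y-x)(f_k(y)-f_k(x))(g(y)-g(x))}{|y-x|^{n+\alpha+1}}
\]
is antisymmetric under the swap $(x,y)\leftrightarrow(y,x)$, so its double integral over $\R^n\times\R^n$ vanishes. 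Evaluating~\eqref{eq:BV_alpha_p_leibniz_C_b} on $\R^n$ and inserting the two zero-mass identities then yields~\eqref{eq:BV_alpha_p_leibniz_C_b_ibp}, completing the argument.
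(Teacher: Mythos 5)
Your Step~1 integrability check is fine, but Steps~2 and~3 both raise concerns. In Step~2, you obtain $\int_{\R^n} f\,\div^\alpha(g\phi)\,dx=-\int_{\R^n} g\phi\cdot dD^\alpha f$ by directly invoking the extended test class of~\cite{CSS21}*{Proposition~3.7}, and your argument never actually uses the hypothesis $g\in C_b(\R^n)$. This is a red flag: the continuity assumption on~$g$ is exactly what the statement singles out, and it is precisely what \cref{res:BV_alpha_p_leibniz_Sob_embedding}, \cref{res:remove_C_p_finite} and \cref{res:remove_C_p_infty} work hard to relax (replacing $g$ by its precise representative~$g^\star$ under extra conditions). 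The paper avoids the extended-test-class shortcut entirely: it mollifies $f$ to $f_\eps=\rho_\eps*f\in B^\alpha_{p,1}(\R^n)$, applies the Besov-level duality of \cref{res:nabla_div_ibp_Besov} against $g\phi\in B^\alpha_{q,1}(\R^n;\R^n)$ to write $\int f_\eps\,\div^\alpha(g\phi)\,dx=-\int g\phi\cdot\nabla^\alpha f_\eps\,dx$, and then passes to the limit using $D^\alpha f_\eps=\rho_\eps*D^\alpha f\weakto D^\alpha f$ in $\M(\R^n;\R^n)$ — the step that demands $g\phi\in C_c(\R^n;\R^n)$, hence continuity of~$g$. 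If you wish to keep your shortcut, you should verify that~\cite{CSS21}*{Proposition~3.7} genuinely admits test fields of the form $g\phi$ and make explicit where the continuity of $g\phi$ enters the approximation underlying that extension; as written the argument is incomplete and suppresses the one feature of the hypothesis that the theorem is built around.

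In Step~3 the approximation of $f$ by $(f_k)\subset C^\infty_c(\R^n)$ in $L^p$ fails for $p=+\infty$, which the theorem allows (with $q=1$): $C^\infty_c$ is not dense in $L^\infty$. The detour is in fact unnecessary, since by \cref{res:D_alpha_D_alpha_NL_int} the kernel of $\nabla^\alpha_{\NL}(f,g)$ is absolutely integrable over $\R^n\times\R^n$ for the given $f,g$, so Fubini and the antisymmetry under $(x,y)\leftrightarrow(y,x)$ apply directly without smoothing. The paper's own argument is shorter still: $\div^\alpha_{\NL}(g,v)=0$ for every constant $v\in\R^n$, so \cref{res:div_nabla_NL_ibp} gives $v\cdot\int_{\R^n}\nabla^\alpha_{\NL}(f,g)\,dx=\int_{\R^n}f\,\div^\alpha_{\NL}(g,v)\,dx=0$ for all~$v$.
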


In the proof of \cref{res:BV_alpha_p_leibniz_C_b}, we take advantage of the following Leibniz rules for the operators $\nabla^\alpha$ and $\div^\alpha$ in Besov spaces.
\cref{res:nabla_alpha_div_alpha_leibniz_besov} below generalizes~\cite{CS19}*{Lemmas~2.5 and~2.6} and~\cite{CS19-2}*{Lemmas~2.4 and~2.5}.

\begin{lemma}[Leibniz rules for $\nabla^\alpha$ and $\div^\alpha$]
\label{res:nabla_alpha_div_alpha_leibniz_besov}
Let $\alpha\in(0,1)$ and let $p,q,r\in[1,+\infty]$ be such that $\frac1p+\frac1q=\frac1r$.

\begin{enumerate}[(i)]

\item
\label{item:nabla_alpha_leibniz_besov} 
If $f\in B^\alpha_{p,1}(\R^n)$ and $g\in B^\alpha_{q,1}(\R^n)$, then
\begin{equation}
\label{eq:nabla_alpha_leibniz_besov}
\nabla^\alpha(fg)
=
g\,\nabla^\alpha f
+
f\,\nabla^\alpha g
+
\nabla^\alpha_{\rm NL}(f,g)
\quad
\text{in}\ L^r(\R^n;\R^n).
\end{equation}

\item
\label{item:div_alpha_leibniz_besov} 
If $f\in B^\alpha_{p,1}(\R^n)$ and $\phi\in B^\alpha_{q,1}(\R^n;\R^n)$, then
\begin{equation}
\label{eq:div_alpha_leibniz_besov}
\div^\alpha(f\phi)
=
\phi\cdot\nabla^\alpha f
+
f\,\div^\alpha\phi
+
\div^\alpha_{\rm NL}(f,\phi)
\quad
\text{in}\ L^r(\R^n).
\end{equation}	

\end{enumerate}
\end{lemma}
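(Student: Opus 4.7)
The plan is to reduce both identities to the elementary algebraic decomposition
\[
f(y)g(y) - f(x)g(x)
=
g(x)(f(y)-f(x)) + f(x)(g(y)-g(x)) + (f(y)-f(x))(g(y)-g(x)),
\]
which, inserted inside the defining singular integral of $\nabla^\alpha(fg)(x)$ and multiplied by $(y-x)/|y-x|^{n+\alpha+1}$, formally produces the three terms on the right-hand side of~\eqref{eq:nabla_alpha_leibniz_besov}.

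The first task will be to check a priori that every object in \eqref{eq:nabla_alpha_leibniz_besov} belongs to $L^r(\R^n;\R^n)$, so that the identity can be read there. This is immediate: by \cref{res:nabla_alpha_div_alpha_in_L_p} combined with H\"older's inequality (using the trivial embeddings $B^\alpha_{p,1}(\R^n)\subset L^p(\R^n)$ and $B^\alpha_{q,1}(\R^n)\subset L^q(\R^n)$), the two local pieces $g\,\nabla^\alpha f$ and $f\,\nabla^\alpha g$ belong to $L^r(\R^n;\R^n)$; by \cref{res:nabla_alpha_NL_div_alpha_NL_int}, so does the non-local piece $\nabla^\alpha_{\rm NL}(f,g)$; and by \cref{res:nabla_alpha_div_alpha_product}\eqref{item:nabla_alpha_product}, so does $\nabla^\alpha(fg)$.

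The second task will be to justify the pointwise splitting. Here I would invoke \cref{res:D_alpha_D_alpha_NL_int}\eqref{item:D_alpha_int}, \cref{res:D_alpha_D_alpha_NL_int}\eqref{item:D_alpha_NL_int} and \cref{res:D_alpha_product}, which together ensure that the four auxiliary operators $\mathcal D^\alpha f$, $\mathcal D^\alpha g$, $\mathcal D^\alpha(fg)$ and $\mathcal D^\alpha_{\rm NL}(f,g)$ are finite $\Leb{n}$-a.e.\ on $\R^n$. At every $x$ outside their common null set, each of the four singular integrals defining $\nabla^\alpha f(x)$, $\nabla^\alpha g(x)$, $\nabla^\alpha(fg)(x)$ and $\nabla^\alpha_{\rm NL}(f,g)(x)$ is \emph{absolutely} convergent, so that the algebraic identity above, after multiplication by $(y-x)/|y-x|^{n+\alpha+1}$ and integration in $y$, can be split term by term. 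This yields \eqref{eq:nabla_alpha_leibniz_besov} $\Leb{n}$-a.e., hence in $L^r(\R^n;\R^n)$, proving~(i).

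Part~(ii) will follow by the very same strategy, starting from the scalar identity obtained by pairing the analogous vector decomposition of $f(y)\phi(y)-f(x)\phi(x)$ with $y-x$; the absolute convergence of the four integrals is again provided by \cref{res:D_alpha_D_alpha_NL_int} and \cref{res:D_alpha_product}, applied this time componentwise to $\phi$. I do not foresee any genuine obstacle: the algebraic identity is trivial, and the whole analytic difficulty has already been packaged into the integrability of the operators $\mathcal D^\alpha$ and $\mathcal D^\alpha_{\rm NL}$ established earlier in this section.
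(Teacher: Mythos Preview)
Your proposal is correct and follows essentially the same route as the paper: first verify that every term lies in $L^r$ via \cref{res:nabla_alpha_div_alpha_in_L_p}, \cref{res:nabla_alpha_div_alpha_product} and \cref{res:nabla_alpha_NL_div_alpha_NL_int}, then use the finiteness of $\mathcal D^\alpha f$, $\mathcal D^\alpha g$, $\mathcal D^\alpha(fg)$ and $\mathcal D^\alpha_{\rm NL}(f,g)$ a.e.\ (from \cref{res:D_alpha_D_alpha_NL_int} and \cref{res:D_alpha_product}) to justify splitting the defining integral pointwise via the algebraic identity. The paper performs the algebraic decomposition in two steps rather than one, but the argument is the same.
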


\begin{proof}
By \cref{res:nabla_alpha_div_alpha_in_L_p}, \cref{res:nabla_alpha_div_alpha_product}\eqref{item:nabla_alpha_product} and \cref{res:nabla_alpha_NL_div_alpha_NL_int}, we already know that each of the terms appearing in~\eqref{eq:nabla_alpha_leibniz_besov} and~\eqref{eq:div_alpha_leibniz_besov} is a well defined $L^r$-function.
Thanks to \cref{res:D_alpha_D_alpha_NL_int} and Fubini's Theorem, we can thus write
\begin{align*}
\nabla^\alpha(fg)&(x)
=
\mu_{n,\alpha}
\int_{\R^n}
\frac{(y-x)(f(y)g(y)-f(x)g(x))}{|y-x|^{n+\alpha+1}}
\,dy
\\
&=
\mu_{n,\alpha}
\int_{\R^n}
\frac{(y-x)(f(y)g(y)-f(x)g(y)+f(x)g(y)-f(x)g(x))}{|y-x|^{n+\alpha+1}}
\,dy
\\
&=
\mu_{n,\alpha}
\int_{\R^n}
g(y)\,
\frac{(y-x)(f(y)-f(x))}{|y-x|^{n+\alpha+1}}
\,dy
+
f(x)\,\nabla^\alpha g(x)
\\
&=
\mu_{n,\alpha}
\int_{\R^n}
\frac{(y-x)(f(y)-f(x))(g(y)-g(x))}{|y-x|^{n+\alpha+1}}
\,dy
+
g(x)\,\nabla^\alpha f(x)
+
f(x)\,\nabla^\alpha g(x)
\\
&=
g(x)\,\nabla^\alpha f(x)
+
f(x)\,\nabla^\alpha g(x)
+
\nabla^\alpha_{\rm NL}(f,g)(x)
\end{align*}
for a.e.\ $x\in\R^n$, proving~\eqref{eq:nabla_alpha_leibniz_besov}.
The proof of~\eqref{eq:div_alpha_leibniz_besov} is similar and is left to the reader.
\end{proof}

\begin{proof}[Proof of \cref{res:BV_alpha_p_leibniz_C_b}]
Since $g\in L^q(\R^n)\cap L^\infty(\R^n)$, we clearly have that $fg\in L^1(\R^n)\cap L^p(\R^n)$ by H\"older's inequality.
We divide the proof in two steps.

\smallskip

\textit{Step~1: proof of~\eqref{eq:BV_alpha_p_leibniz_C_b}}.
Let $\phi\in\Lip_c(\R^n;\R^n)$ be given.
By \cref{res:nabla_alpha_div_alpha_leibniz_besov}\eqref{item:div_alpha_leibniz_besov}, we can write 
\begin{equation*}
\div^\alpha(g\phi)
=
g\,\div^\alpha\phi
+
\phi\cdot\nabla^\alpha g
+\div_{\rm NL}^\alpha(g,\phi)
\quad
\text{in}\
L^q(\R^n),
\end{equation*} 
so that
\begin{align*}
\int_{\R^n}fg\,\div^\alpha\phi\,dx 
=
\int_{\R^n}f\,
\div^\alpha(g\phi)
\,dx
-
\int_{\R^n}
f\phi\cdot\nabla^\alpha g
\,dx
-
\int_{\R^n}
f\,\div^\alpha_{\rm NL}(g,\phi)\,
dx.
\end{align*}
By \cref{res:div_nabla_NL_ibp}, we have that
\begin{equation*}
\int_{\R^n}
f\,\div^\alpha_{\rm NL}(g,\phi)\,
dx
=
\int_{\R^n}
\phi\cdot\nabla^\alpha_{\rm NL}(f,g)\,
dx.
\end{equation*}
Now let $(f_\eps)_{\eps>0}\subset BV^{\alpha,p}(\R^n)\cap C^\infty(\R^n)$ be given by $f_\eps=\rho_\eps*f$ for all $\eps>0$. 
In particular, we have $f_{\eps} \in W^{1, p}(\R^n)$ for each $\eps > 0$, and we notice that $W^{1,p}(\R^n) \subset B^{\alpha}_{p,q}(\R^n)$ for all $\alpha \in (0,1)$ and $p, q \in [1, +\infty]$, see~\cite{Leoni17}*{Theorem~17.33}. 
As a consequence, we have $f_\eps\in B^\alpha_{p,1}(\R^n)$ for each $\eps>0$.
Since $g\phi\in B^\alpha_{q,1}(\R^n;\R^n)$ for each $\eps>0$, by \cref{res:nabla_div_ibp_Besov} we can write
\begin{equation*}
\int_{\R^n}f_\eps\,
\div^\alpha(g\phi)
\,dx
=
-
\int_{\R^n}g\phi\cdot
\nabla^\alpha f_\eps\,dx
\end{equation*}
for all $\eps>0$.
On the one side, we have
\begin{equation*}
\lim_{\eps\to0^+}
\int_{\R^n}f_\eps\,
\div^\alpha(g\phi)
\,dx
=
\int_{\R^n}
f\,
\div^\alpha(g\phi)
\,dx
\end{equation*}
by H\"older's inequality in the case $p<+\infty$ and by the Dominated Convergence Theorem in the case $p=+\infty$.
On the other side, since $g\phi\in C_c(\R^n;\R^n)$, we also have
\begin{equation*}
\lim_{\eps\to0^+}
\int_{\R^n}g\phi\cdot
\nabla^\alpha f_\eps\,dx
=
\int_{\R^n}g\phi\cdot
dD^\alpha f
\end{equation*}
since $D^\alpha f_\eps=\rho_\eps*D^\alpha f\weakto D^\alpha f$ in $\M (\R^n;\R^n)$ as $\eps\to0^+$, thanks to \cite{CSS21}*{Theorem 3.2}.
We thus conclude that 
\begin{equation*}
\int_{\R^n}f\,
\div^\alpha(g\phi)
\,dx
=
-
\int_{\R^n}g\phi\cdot
dD^\alpha f,
\end{equation*}
so that
\begin{equation*}
\int_{\R^n}fg\,\div^\alpha\phi\,dx 
=
-
\int_{\R^n}g\phi\cdot
dD^\alpha f
-
\int_{\R^n}
f\phi\cdot\nabla^\alpha g
\,dx
-
\int_{\R^n}
\phi\cdot\nabla^\alpha_{\rm NL}(f,g)\,
dx
\end{equation*}
for any $\phi\in\Lip_c(\R^n;\R^n)$ and~\eqref{eq:BV_alpha_p_leibniz_C_b} immediately follows by a standard approximation argument which allows us to pass to test functions in $C_c(\R^n; \R^n)$. 

\smallskip

\textit{Step~2: proof of~\eqref{eq:BV_alpha_p_leibniz_C_b_D_alpha_zero} and~\eqref{eq:BV_alpha_p_leibniz_C_b_ibp}}.
Since $fg\in BV^{\alpha}(\R^n)$ by Step~1, the first equation in~\eqref{eq:BV_alpha_p_leibniz_C_b_D_alpha_zero} readily follows from \cref{res:zero_total_mesure}.
Moreover, since obviously $\div^\alpha_{\rm NL}(g,v)=0$ for all $v\in\R^n$,
by \cref{res:div_nabla_NL_ibp} we get
\begin{align*}
v\cdot
\int_{\R^n}\nabla^\alpha_{\rm NL}(f,g)\,dx
=
\int_{\R^n}v\cdot\nabla^\alpha_{\rm NL}(f,g)\,dx
=
\int_{\R^n}f\,\div^\alpha_{\rm NL}(g,v)\,dx=0
\end{align*} 
for all $v\in\R^n$ and also the second equation in~\eqref{eq:BV_alpha_p_leibniz_C_b_D_alpha_zero} immediately follows.
By combining~\eqref{eq:BV_alpha_p_leibniz_C_b} with~\eqref{eq:BV_alpha_p_leibniz_C_b_D_alpha_zero}, we get~\eqref{eq:BV_alpha_p_leibniz_C_b_ibp} and the proof is complete.  
\end{proof}

%
%
%

\subsection{Products of \texorpdfstring{$BV^{\alpha,p}$}{BVˆ(alpha,p)} and Besov functions}

In the remaining part of the present section we aim to relax the assumptions on the Besov function~$g$ in \cref{res:BV_alpha_p_leibniz_C_b}, proving our first main result \cref{resi:leibniz_BV_alpha_p}. 

Let us observe that the continuity of the Besov function~$g$ in \cref{res:BV_alpha_p_leibniz_C_b} can be trivially dropped for $p\in\left[1,\frac n{n-\alpha}\right)$, since for the conjugate exponent $q\in\left(\frac n\alpha,+\infty\right]$ we know that $B^\alpha_{q,1}(\R^n)\subset C_b(\R^n)$ by the Sobolev Embedding Theorem, see~\cite{AF03}*{Theorem 7.34(c)} and \cite{Leoni17}*{Theorem~17.52}.

\begin{corollary}[$BV^\alpha\cdot B^\alpha_{q,1}$ for $1\le p<\frac n{n-\alpha}$]
\label{res:giovannino}
Let $\alpha\in(0,1)$, $p\in\left[1,\frac n{n-\alpha}\right)$ and set $q=\frac p{p-1}\in\left(\frac n\alpha,+\infty\right]$.
If $f\in BV^{\alpha, p}(\R^n)$ and $g\in  B^\alpha_{q,1}(\R^n)$, then
$fg\in BV^{\alpha,r}(\R^n)$, with $r\in[1,p]$, satisfies~\eqref{eq:BV_alpha_p_leibniz_C_b}.
In addition, equalities~\eqref{eq:BV_alpha_p_leibniz_C_b_D_alpha_zero}
and~\eqref{eq:BV_alpha_p_leibniz_C_b_ibp} hold.
\end{corollary}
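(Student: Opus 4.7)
The plan is to reduce everything to the already-proved \cref{res:BV_alpha_p_leibniz_C_b}. The key observation, already flagged in the sentence immediately preceding the statement, is that the constraint $p\in\left[1,\frac{n}{n-\alpha}\right)$ is exactly equivalent, via $\frac1p+\frac1q=1$, to $q\in\left(\frac n\alpha,+\infty\right]$, a range for which the Besov space $B^\alpha_{q,1}(\R^n)$ embeds continuously into $C_b(\R^n)$. So the whole task is to upgrade the hypothesis $g\in B^\alpha_{q,1}(\R^n)$ to $g\in C_b(\R^n)\cap B^\alpha_{q,1}(\R^n)$ and then invoke \cref{res:BV_alpha_p_leibniz_C_b}.

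Concretely, I would split into two cases. For $q\in\left(\frac n\alpha,+\infty\right)$, I would cite the Sobolev-type embedding $B^\alpha_{q,1}(\R^n)\hookrightarrow C^{0,\alpha-\frac nq}_b(\R^n)\subset C_b(\R^n)$ provided by~\cite{AF03}*{Theorem 7.34(c)} and~\cite{Leoni17}*{Theorem~17.52}, which selects at once a bounded continuous representative of $g$. For the boundary value $q=+\infty$, corresponding to $p=1$, membership in $B^\alpha_{\infty,1}(\R^n)$ already forces boundedness, and the finiteness of $\int_{\R^n}\|g(\cdot+h)-g\|_{L^\infty(\R^n)}|h|^{-n-\alpha}\,dh$ in particular implies the continuity at $h=0$ of the translation map into $L^\infty(\R^n)$, so a representative of $g$ is uniformly continuous on $\R^n$. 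In either case, replacing $g$ by its continuous representative alters neither the product $fg\in L^p(\R^n)$ nor any of the Besov or fractional-variation quantities appearing in the conclusion.

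With this reduction in place, the Leibniz rule~\eqref{eq:BV_alpha_p_leibniz_C_b}, the two vanishing identities~\eqref{eq:BV_alpha_p_leibniz_C_b_D_alpha_zero}, and the integration-by-parts formula~\eqref{eq:BV_alpha_p_leibniz_C_b_ibp} all transfer verbatim from \cref{res:BV_alpha_p_leibniz_C_b}, yielding that $fg\in BV^{\alpha,r}(\R^n)$ for every $r\in[1,p]$ (the range $r\in[1,p]$ coming from the $L^r$-integrability of $fg$ via H\"older's inequality, since $g\in L^\infty(\R^n)$). There is essentially no genuine obstacle in this argument; the only mildly delicate point is that the embedding $B^\alpha_{q,1}\hookrightarrow C_b$ needs to be handled separately at the boundary $q=+\infty$, where the classical H\"older embedding degenerates and one argues directly from the defining seminorm.
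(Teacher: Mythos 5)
Your proposal is correct and follows exactly the route the paper takes: upgrade $g\in B^\alpha_{q,1}(\R^n)$ to $g\in C_b(\R^n)\cap B^\alpha_{q,1}(\R^n)$ via the Besov--Sobolev embedding for $q\in\left(\tfrac n\alpha,+\infty\right]$, then invoke \cref{res:BV_alpha_p_leibniz_C_b}. The only difference is cosmetic, in that you spell out the endpoint $q=+\infty$ separately whereas the paper cites the embedding uniformly over the full range; both are fine.
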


In order to keep removing the continuity assumption on the Besov function $g$ for higher exponents $p\in\left[\frac n{n-\alpha},+\infty\right]$, we have to rely on the absolute continuity properties of the fractional variation established in~\cite{CSS21}*{Theorem~1.1}. 
To this purpose, we recall the notion of $(\alpha,p)$-capacity, for whose properties we refer to~\cite{CSS21}*{Section~5}.

\begin{definition}[The $(\alpha,p)$-capacity]
Let $\alpha\in(0,1)$ and $p\in[1,+\infty)$.
We let 
\begin{equation*}
\Capa_{\alpha,p}(K)
=
\inf
\set*{\|f\|_{S^{\alpha,p}(\R^n)}^p : f\in C^\infty_c(\R^n),\ f\ge\chi_K}
\end{equation*}
be the \emph{$(\alpha,p)$-capacity} of the compact set $K\subset\R^n$.
\end{definition}

We begin with the following result, in which the continuity assumption on the Besov function~$g$ in \cref{res:BV_alpha_p_leibniz_C_b} is dropped for exponents $p\in\left[\frac n{n-\alpha},\frac1{1-\alpha}\right)$, provided that in~\eqref{eq:BV_alpha_p_leibniz_C_b} and~\eqref{eq:BV_alpha_p_leibniz_C_b_ibp} the function~$g$ is replaced with its precise representative~$g^\star $.
We emphasize that, since $g$ is a Besov function, its precise representative satisfies
\begin{equation*}
\lim_{r \to 0^+} \aint_{B_r(x)} |g(y) - g^{*}(x)| \, dy = 0
\end{equation*} 
for every point $x\in\R^n$ outside a set which is negligible with respect to a suitable $(\beta,q)$-capacity, see~\cite{CSS21}*{Theorem~5.4} for a more detailed explanation.

\begin{corollary}[$BV^{\alpha,p}\cdot L^\infty\cap B^\alpha_{q,1}$ for $p\in[\frac n{n-\alpha},\frac1{1-\alpha})$]
\label{res:BV_alpha_p_leibniz_Sob_embedding}
Let $\alpha\in(0,1)$, $p\in\left[\frac n{n-\alpha},\frac1{1-\alpha}\right)$ and set $q=\frac p{p-1}\in\left(\frac1\alpha,\frac n\alpha\right]$.
If $f\in BV^{\alpha, p}(\R^n)$ and $g\in L^\infty(\R^n)\cap B^\alpha_{q,1}(\R^n)$, then
$fg\in BV^{\alpha,r}(\R^n)$ for all $r \in [1, p]$, with
\begin{equation}
\label{eq:pallina} 
D^{\alpha}(fg) 
= 
g^\star  D^{\alpha} f 
+ 
f\,
\nabla^{\alpha}g\,
\Leb{n} 
+ 
\nabla^{\alpha}_{\rm NL}(f,g)\,\Leb{n}
\quad 
\text{in}\ \M (\R^n; \R^{n}).
\end{equation}
In addition,
\begin{equation}
\label{eq:ping}
D^\alpha(fg)(\R^n)=0,
\qquad
\int_{\R^n}\nabla^\alpha_{\rm NL}(f,g)\,dx=0,
\end{equation}
and 
\begin{equation}
\label{eq:pong}
\int_{\R^n}
f\,\nabla^\alpha g\,dx
=
-
\int_{\R^n}
g^\star \,d D^\alpha f.
\end{equation}
\end{corollary}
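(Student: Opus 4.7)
My strategy is to reduce to \cref{res:BV_alpha_p_leibniz_C_b} by mollifying $g$. Let $(\rho_\eps)_{\eps>0}$ be a standard family of mollifiers and set $g_\eps=\rho_\eps*g$. Then $g_\eps\in C_b^\infty(\R^n)\cap B^\alpha_{q,1}(\R^n)$ with $\|g_\eps\|_{L^\infty(\R^n)}\le\|g\|_{L^\infty(\R^n)}$ and $[g_\eps]_{B^\alpha_{q,1}(\R^n)}\le[g]_{B^\alpha_{q,1}(\R^n)}$ by Young's inequality; since $q\in\left(\tfrac1\alpha,\tfrac n\alpha\right]$ is finite, one also has $g_\eps\to g$ in $B^\alpha_{q,1}(\R^n)$ as $\eps\to0^+$. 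Applying \cref{res:BV_alpha_p_leibniz_C_b} to $(f,g_\eps)$ and testing against $\phi\in C^\infty_c(\R^n;\R^n)$ yields
\begin{equation*}
\int_{\R^n}fg_\eps\,\div^\alpha\phi\,dx
=-\int_{\R^n}g_\eps\,\phi\cdot dD^\alpha f
-\int_{\R^n}f\phi\cdot\nabla^\alpha g_\eps\,dx
-\int_{\R^n}\phi\cdot\nabla^\alpha_{\NL}(f,g_\eps)\,dx,
\end{equation*}
and I plan to pass to the limit $\eps\to0^+$ in each of the four terms.

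The three easier limits I would handle as follows: the left-hand side converges to $\int_{\R^n} fg\,\div^\alpha\phi\,dx$ by dominated convergence, using that $\div^\alpha\phi\in L^q(\R^n)$ (by \cref{res:nabla_alpha_div_alpha_in_L_p}) so that $f\,\div^\alpha\phi\in L^1(\R^n)$ by H\"older, and that $|g_\eps|\le\|g\|_{L^\infty}$ with $g_\eps\to g$ a.e.; the term with $\nabla^\alpha g_\eps$ converges to $\int_{\R^n} f\phi\cdot\nabla^\alpha g\,dx$ since \cref{res:nabla_alpha_div_alpha_in_L_p} applied to $g_\eps-g$ gives $\nabla^\alpha g_\eps\to\nabla^\alpha g$ in $L^q(\R^n;\R^n)$; the non-local term converges to $\int_{\R^n}\phi\cdot\nabla^\alpha_{\NL}(f,g)\,dx$ via bilinearity of $\nabla^\alpha_{\NL}$ and the estimate $\|\nabla^\alpha_{\NL}(f,g_\eps-g)\|_{L^1}\le 2\mu_{n,\alpha}\|f\|_{L^p}[g_\eps-g]_{B^\alpha_{q,1}}\to 0$ coming from \cref{res:nabla_alpha_NL_div_alpha_NL_int}.

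The main obstacle will be the convergence $\int_{\R^n} g_\eps\,\phi\cdot dD^\alpha f\to\int_{\R^n} g^\star\,\phi\cdot dD^\alpha f$. Here I would combine the absolute continuity $|D^\alpha f|\ll\Haus{n-1}$ from \cite{CSS21}*{Theorem~1.1}, valid because $p<\tfrac{1}{1-\alpha}\le\tfrac{n}{1-\alpha}$, with the capacitary-quasicontinuity of Besov functions from \cite{CSS21}*{Theorem~5.4}: since $\alpha q>1$, the precise representative $g^\star$ is defined outside a set of Hausdorff dimension strictly less than $n-1$ and hence $|D^\alpha f|$-negligible, and at each such point the averages $g_\eps(x)$ converge to $g^\star(x)$. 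A final application of dominated convergence, using $|g_\eps|\le\|g\|_{L^\infty}$ and finiteness of $|D^\alpha f|$, produces the desired limit. Combining the four limits yields the weak formulation of \eqref{eq:pallina}, whence in particular $fg\in BV^\alpha(\R^n)$; interpolating between $L^1(\R^n)$ and $L^p(\R^n)$ (both of which contain $fg$ since $g\in L^\infty$ and $f\in L^p$) gives $fg\in BV^{\alpha,r}(\R^n)$ for every $r\in[1,p]$. Equality \eqref{eq:ping} then follows by applying \cref{res:zero_total_mesure} to $fg\in BV^\alpha$ and by testing \cref{res:div_nabla_NL_ibp} against constant vector fields (for which $\div^\alpha_{\NL}(g,v)\equiv0$), exactly as in Step~2 of the proof of \cref{res:BV_alpha_p_leibniz_C_b}; then \eqref{eq:pong} is immediate from \eqref{eq:pallina} combined with $D^\alpha(fg)(\R^n)=0$.
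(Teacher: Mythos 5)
Your proposal is correct and follows essentially the same strategy as the paper: mollify $g$, apply \cref{res:BV_alpha_p_leibniz_C_b} to $(f,g_\eps)$, pass to the limit term by term using the Besov estimates from \cref{res:nabla_alpha_div_alpha_in_L_p} and \cref{res:nabla_alpha_NL_div_alpha_NL_int}, and settle the measure term via the absolute continuity $|D^\alpha f|\ll\Haus{n-1}$ together with the $\Capa_{\alpha,q}$-quasicontinuity of $g$ from \cite{CSS21}*{Theorem~5.4}, noting that $\alpha q>1$ forces $n-\alpha q<n-1$. The only cosmetic difference is that you pass to the limit in the weak (tested) formulation rather than in the measure identity itself, and you derive $\nabla^\alpha g_\eps\to\nabla^\alpha g$ in $L^q$ from the Besov bound applied to $g_\eps-g$ rather than from the commutation $\nabla^\alpha g_\eps=\rho_\eps*\nabla^\alpha g$ used in the paper; both routes are valid and lead to the same conclusion.
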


\begin{proof}
Note that $n\ge2$ trivially. 
Let $(\rho_\eps)_{\eps>0}$ be a family of standard mollifiers (see~\cite{CS19}*{Section~3.3} for example) and let $g_\eps=\rho_\eps*g$ for all $\eps>0$. 
Since $g_\eps\in  C_b(\R^n)\cap B^\alpha_{q,1}(\R^n)$ for all $\eps>0$ thanks to \cite{Leoni17}*{Proposition~17.12}, by \cref{res:BV_alpha_p_leibniz_C_b} we immediately get that $fg_\eps\in BV^{\alpha,p}(\R^n)$ with
\begin{equation*}
D^{\alpha}(fg_\eps) 
= 
g_\eps\, D^{\alpha} f 
+ 
f\,
\nabla^{\alpha}g_\eps\,
\Leb{n} 
+ 
\nabla^{\alpha}_{\rm NL}(f,g_\eps)\,\Leb{n}
\quad 
\text{in}\ \M (\R^n; \R^{n}).
\end{equation*}
By \cref{res:D_alpha_D_alpha_NL_int} and \cref{res:nabla_alpha_div_alpha_in_L_p}, we have $\nabla^{\alpha}g \in L^{q}(\R^n; \R^n)$ and it is not difficult to recognize that 
\begin{equation*}
\nabla^\alpha g_\eps
=
\rho_\eps*\nabla^\alpha g
\quad
\text{in}\ 
L^q(\R^n;\R^n)
\end{equation*}
and 
\begin{align*}
\|\nabla^{\alpha}_{\rm NL}(f,g_\eps)
-
\nabla^{\alpha}_{\rm NL}(f,g)\|_{L^1(\R^n;\,\R^n)}
&=
\|\nabla^{\alpha}_{\rm NL}(f,g_\eps-g)
\|_{L^1(\R^n;\,\R^n)}
\\
&\le
2\mu_{n,\alpha}
\,
\|f\|_{L^p(\R^n)}
\,
[g-g_\eps]_{B^\alpha_{q,1}(\R^n)}
\end{align*} 
for all $\eps>0$. 
Since $\rho_\eps*\nabla^\alpha g \to \nabla^{\alpha} g$ in $L^{q}(\R^n; \R^n)$ and, by~\cite{Leoni17}*{Proposition~17.12}, $[g-g_\eps]_{B^\alpha_{q,1}(\R^n)} \to 0$, the only thing we have to prove is that 
\begin{equation}
\label{eq:leibniz_claim_g_eps_limit}
\lim_{\eps\to0^+}
g_\eps(x)
=
g^\star (x)
\quad
\text{for $|D^\alpha f|$-a.e.}\ x\in\R^n.
\end{equation} 
Now, since $g \in S^{\alpha, q}(\R^n)$ and
$1<q\le\frac n\alpha$, by~\cite{CSS21}*{Theorem~5.4} we get that
\begin{equation}
\label{eq:g_eps_limit_quasievery_point}
\lim_{\eps\to0^+}
g_\eps(x)
=
g^\star (x)
\quad
\text{for all}\ x\in\R^n\setminus D_g,
\end{equation}
for some set $D_g\subset\R^n$ with $\Capa_{\alpha,q}(D_g)=0$.
Recall that $\Haus{n-\alpha q+\delta}\ll\Capa_{\alpha,q}$ for any $\delta>0$ sufficiently small by~\cite{AH96}*{Theorem~5.1.13 and Corollary~5.1.14}.
Since  
$p<\frac1{1-\alpha}$,
we have
$|D^\alpha f|\ll\Haus{n-1}$
by~\cite{CSS21}*{Theorem~1.1(i)}.
Observing that $n-1>n-\alpha q$, we conclude that $|D^\alpha f|(D_g)=0$, thus proving~\eqref{eq:leibniz_claim_g_eps_limit}. 
Finally, \eqref{eq:ping} and \eqref{eq:pong} can be proved as \eqref{eq:BV_alpha_p_leibniz_C_b_D_alpha_zero} and \eqref{eq:BV_alpha_p_leibniz_C_b_ibp} in \cref{res:BV_alpha_p_leibniz_C_b}.
\end{proof} 

We continue to remove the continuity assumption on the Besov function~$g$ in \cref{res:BV_alpha_p_leibniz_C_b} for larger exponents $p\in\left[\frac1{1-\alpha},+\infty\right]$.
We treat the cases $p<+\infty$ and $p=+\infty$ separately, see \cref{res:remove_C_p_finite} and \cref{res:remove_C_p_infty} below. 
Since we still want to keep exploiting the absolute continuity properties of the fractional variation proved in~\cite{CSS21}*{Theorem~1.1}, we are forced to assume some additional fractional smoothness on the function~$g$.

\begin{corollary}[$BV^{\alpha,p}\cdot L^\infty\cap B^\beta_{q,1}$ for $p\in[\frac1{1-\alpha},+\infty)$]
\label{res:remove_C_p_finite} 
Let $\alpha\in(0,1)$, $p\in\left[\frac1{1-\alpha},+\infty\right)$ and set $q=\frac p{p-1}\in\left(1,\frac1\alpha\right]$. 
Let $\beta\in\left(\beta_{n,p,\alpha},1\right)$, where
\begin{equation*}
\beta_{n,p,\alpha} 
= 
\begin{cases} 1 - \frac{1}{p} 
& 
\text{ if } n \ge 2\
\text{and}\
p\in\left[\frac 1{1-\alpha},\frac n{1-\alpha}\right), 
\\[4mm]
\left(1 - \frac{1}{p} \right)
\left(\alpha+\frac np\right)
& 
\text{ if } p\in\left[\frac n{1-\alpha},+\infty\right).
\end{cases}
\end{equation*}
If $f\in BV^{\alpha,p}(\R^n)$ and $g\in L^{\infty}(\R^n)\cap B^\beta_{q,1}(\R^n)$, then $fg\in BV^{\alpha,r}(\R^n)$ for all $r \in [1,p]$ and satisfies~\eqref{eq:pallina}.
In addition, equalities~\eqref{eq:ping} and~\eqref{eq:pong} hold.
\end{corollary}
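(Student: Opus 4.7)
The strategy mirrors that of \cref{res:BV_alpha_p_leibniz_Sob_embedding}: approximate $g$ by smooth mollifications, apply \cref{res:BV_alpha_p_leibniz_C_b}, and pass to the limit. The novelty is that $g$ has only $B^\beta_{q,1}$ regularity with $\beta<1$, so we need to calibrate the threshold $\beta_{n,p,\alpha}$ to ensure that the precise representative $g^\star$ is well defined $|D^\alpha f|$-almost everywhere.

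First, I would observe that $\beta_{n,p,\alpha}\ge\alpha$ in both regimes, so that $g\in B^\beta_{q,1}(\R^n)\subset b^\alpha_{q,1}(\R^n)$ by a standard splitting of the Besov seminorm into small and large increments, and hence $\nabla^\alpha g\in L^q(\R^n;\R^n)$ by \cref{res:nabla_alpha_div_alpha_in_L_p}. Next, for a family $(\rho_\eps)_{\eps>0}$ of standard mollifiers, I would set $g_\eps=\rho_\eps*g$. Since $g\in L^\infty(\R^n)\cap B^\alpha_{q,1}(\R^n)$, Young's inequality and the standard smoothing of Besov norms give $g_\eps\in C_b(\R^n)\cap B^\alpha_{q,1}(\R^n)$ with $[g_\eps]_{B^\alpha_{q,1}(\R^n)}\le[g]_{B^\alpha_{q,1}(\R^n)}$ and $[g-g_\eps]_{B^\alpha_{q,1}(\R^n)}\to0$ as $\eps\to0^+$. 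Thus \cref{res:BV_alpha_p_leibniz_C_b} applies to the pair $(f,g_\eps)$, yielding $fg_\eps\in BV^{\alpha,p}(\R^n)$ with
\begin{equation*}
D^{\alpha}(fg_\eps)
=
g_\eps\, D^{\alpha} f
+
f\,\nabla^{\alpha}g_\eps\,\Leb{n}
+
\nabla^{\alpha}_{\rm NL}(f,g_\eps)\,\Leb{n}.
\end{equation*}

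I would then pass to the limit as $\eps\to0^+$. On the left, $fg_\eps\to fg$ in $L^r(\R^n)$, so that $D^\alpha(fg_\eps)\weakto D^\alpha(fg)$ in $\M(\R^n;\R^n)$ by lower semicontinuity combined with the convergence of the right-hand side, which is established as follows. Since $\nabla^\alpha g_\eps=\rho_\eps*\nabla^\alpha g\to\nabla^\alpha g$ in $L^q(\R^n;\R^n)$, we get $f\,\nabla^\alpha g_\eps\to f\,\nabla^\alpha g$ in $L^r(\R^n;\R^n)$; and by \cref{res:nabla_alpha_NL_div_alpha_NL_int},
\begin{equation*}
\|\nabla^\alpha_{\rm NL}(f,g_\eps)-\nabla^\alpha_{\rm NL}(f,g)\|_{L^1(\R^n;\,\R^n)}
\le
2\mu_{n,\alpha}\,\|f\|_{L^p(\R^n)}\,[g-g_\eps]_{B^\alpha_{q,1}(\R^n)}\to0.
\end{equation*}
For the measure part, by Dominated Convergence it suffices to prove that $g_\eps(x)\to g^\star(x)$ for $|D^\alpha f|$-a.e.\ $x\in\R^n$, since $\|g_\eps\|_{L^\infty(\R^n)}\le\|g\|_{L^\infty(\R^n)}$ dominates and $|D^\alpha f|$ is finite.

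The crucial step, and the main obstacle, is the pointwise convergence of $g_\eps$ to $g^\star$ outside a $|D^\alpha f|$-negligible set. By the continuous embedding $B^\beta_{q,1}(\R^n)\subset S^{\beta,q}(\R^n)$ from \cref{res:nabla_div_ibp_Besov}, together with~\cite{CSS21}*{Theorem~5.4} applied with regularity parameter $\beta$ in place of $\alpha$, there exists a set $D_g\subset\R^n$ with $\Capa_{\beta,q}(D_g)=0$ such that $g_\eps(x)\to g^\star(x)$ for every $x\in\R^n\setminus D_g$. By~\cite{AH96}*{Theorem~5.1.13 and Corollary~5.1.14}, $\Haus{n-\beta q+\delta}\ll\Capa_{\beta,q}$ for all $\delta>0$ small, so $\Haus{n-\beta q+\delta}(D_g)=0$. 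I would then split into the two regimes: if $p\in\left[\frac1{1-\alpha},\frac n{1-\alpha}\right)$, then $|D^\alpha f|\ll\Haus{n-1}$ by~\cite{CSS21}*{Theorem~1.1}, and the condition $\beta>1-\frac1p$ gives $\beta q>1$, hence $n-1>n-\beta q+\delta$ for $\delta$ small enough; if instead $p\in\left[\frac n{1-\alpha},+\infty\right)$, then $|D^\alpha f|\ll\Haus{n-\alpha-\frac np}$, and the condition $\beta>\left(1-\frac1p\right)\left(\alpha+\frac np\right)$ gives $\beta q>\alpha+\frac np$, hence $n-\alpha-\frac np>n-\beta q+\delta$ for $\delta$ small enough. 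In both cases $|D^\alpha f|(D_g)=0$, establishing the required pointwise convergence and thus~\eqref{eq:pallina}. Finally, the two identities in~\eqref{eq:ping} are obtained as in \cref{res:BV_alpha_p_leibniz_C_b}, appealing to \cref{res:zero_total_mesure} for the first and to \cref{res:div_nabla_NL_ibp} with constant $\phi\equiv v\in\R^n$ (which makes $\div^\alpha_{\rm NL}(g,v)=0$) for the second, and~\eqref{eq:pong} follows by combining~\eqref{eq:pallina} with~\eqref{eq:ping}.
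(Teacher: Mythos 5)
Your proof follows the same strategy as the paper's — mollify $g$, apply \cref{res:BV_alpha_p_leibniz_C_b} to the pair $(f,g_\eps)$, pass to the limit, and control the pointwise convergence of $g_\eps$ to $g^\star$ via the interplay between the $(\beta,q)$-capacity of the exceptional set and the absolute continuity of $|D^\alpha f|$ from~\cite{CSS21}*{Theorem~1.1}. Your preliminary observation that $\beta_{n,p,\alpha}\ge\alpha$ (hence $B^\beta_{q,1}(\R^n)\subset b^\alpha_{q,1}(\R^n)$, so that all the Besov-space machinery applies) is correct and a useful clarification that the paper leaves implicit.

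However, there is a genuine gap: you do not address the borderline case $n=1$ and $p=\frac1{1-\alpha}$, which is allowed by the hypotheses. There $q=\frac1\alpha$, so $n-\alpha-\frac np=0$ and $|D^\alpha f|\ll\Haus0$ is vacuous, while $\beta q=\frac\beta\alpha>1=n$ forces $n-\beta q<0$; the inequality "$n-\alpha-\frac np>n-\beta q+\delta$ for $\delta$ small" then requires $n-\beta q+\delta<0$, and the expression $\Haus{n-\beta q+\delta}$ (and hence the quoted relation $\Haus{n-\beta q+\delta}\ll\Capa_{\beta,q}$ from~\cite{AH96}) is meaningless for negative exponents. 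More concretely, knowing $\Haus{s}(D_g)=0$ for all small $s>0$ does \emph{not} imply $\Haus0(D_g)=0$ (a single point is a counterexample), so the chain of implications breaks. The paper disposes of this case separately: since $\beta q>n$, the Sobolev embedding gives $B^\beta_{q,1}(\R)\subset C_b(\R)$, so $g$ is continuous, $g^\star=g$, and $g_\eps\to g$ everywhere — no capacity argument is needed. You should either insert this special case at the start of Case 2, or alternatively invoke that $\Capa_{\beta,q}$-null sets are empty when $\beta q>n$ (so $D_g=\emptyset$), but some explicit handling is required.
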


\begin{proof}
Let $(\rho_\eps)_{\eps>0}$ be a family of standard mollifiers (see~\cite{CS19}*{Section~3.3} for example) and let $g_\eps=\rho_\eps*g$ for all $\eps>0$. 
The proof now goes as for \cref{res:BV_alpha_p_leibniz_Sob_embedding}, again the only thing we need to prove being~\eqref{eq:leibniz_claim_g_eps_limit}.
We now distinguish two cases.

\smallskip

\textit{Case~1: $n \ge 2$ and $p\in\left[\frac 1{1-\alpha},\frac n{1-\alpha}\right)$}.
By~\cite{CSS21}*{Theorem~5.4}, we know that the limit in~\eqref{eq:g_eps_limit_quasievery_point} is valid for some set $D_g\subset\R^n$ such that $\Capa_{\beta,q}(D_g)=0$.
Recall that $\Haus{n-\beta q+\delta}\ll\Capa_{\beta,q}$ for any $\delta>0$ by~\cite{AH96}*{Theorem~5.1.13 and Corollary~5.1.14}.
Since  
$p<\frac n{1-\alpha}$,
we have
$|D^\alpha f|\ll\Haus{n-1}$
by~\cite{CSS21}*{Theorem~1.1(i)}. 
Observing that $n-1>n-\beta q$, since $\beta > \frac{1}{q}=1-\frac1p$, we get that $|D^\alpha f|(D_g)=0$ and the conclusion follows.

\smallskip

\textit{Case~2: $p\in\left[\frac n{1-\alpha},+\infty\right)$}.
If $n=1$ and $p=\frac1{1-\alpha}$, then $\beta\in(\alpha,1)$ and $q=\frac1\alpha>\frac1\beta$, so that $B^\beta_{q,1}(\R)\subset C_b(\R)$ by the Sobolev Embedding Theorem~\cite{AF03}*{Theorem~7.34(c)}. 
Hence the limit in~\eqref{eq:g_eps_limit_quasievery_point} is valid for all $x\in\R$ and the conclusion follows immediately.
We can thus assume either $n=1$ and $p>\frac1{1-\alpha}$ or $n\ge2$.
By~\cite{CSS21}*{Theorem~5.4}, we know that the limit in~\eqref{eq:g_eps_limit_quasievery_point} is valid for some set $D_g\subset\R^n$ such that $\Capa_{\beta,q}(D_g)=0$.
Recall that $\Haus{n-\beta q+\delta}\ll\Capa_{\beta,q}$ for any $\delta>0$ by~\cite{AH96}*{Theorem~5.1.13 and Corollary~5.1.14}.
Since  
$p\ge\frac n{1-\alpha}$,
we have
$|D^\alpha f|\ll\Haus{\frac nq-\alpha}$
by~\cite{CSS21}*{Theorem~1.1(ii)}. 
Observing that $\frac nq-\alpha>n-\beta q$ if and only if $\beta > \frac{n+\alpha}{q} - \frac{n}{q^2} = \left(1 - \frac{1}{p} \right)
\left(\alpha+\frac np\right)
$, we get that $|D^\alpha f|(D_g)=0$ and the conclusion follows.
\end{proof}

We notice that $\beta_{n,p,\alpha} \to \alpha^+$ for $p \to + \infty$.
Consequently, at least heuristically, if $p = + \infty$, then we can take $g \in L^{\infty}(\R^n) \cap W^{\beta, 1}(\R^n)$ for all $\beta \in [\alpha, 1)$. 
Actually, since $W^{\beta, 1}(\R^n) \subset W^{\alpha, 1}(\R^n)$ for all $\beta \in [\alpha, 1)$, this is equivalent to choosing $g\in L^{\infty}(\R^n)\cap W^{\alpha,1}(\R^n)$. The following result provides a rigorous formulation of this observation.

\begin{corollary}[$BV^{\alpha,\infty}\cdot L^\infty\cap W^{\alpha,1}$]
\label{res:remove_C_p_infty} 
Let $\alpha\in(0,1)$.
If $f\in BV^{\alpha, \infty}(\R^n)$ and $g\in L^{\infty}(\R^n)\cap W^{\alpha,1}(\R^n)$, then $fg\in BV^{\alpha,1}(\R^n)\cap BV^{\alpha,\infty}(\R^n)$ satisfies~\eqref{eq:pallina}.
In addition, equalities~\eqref{eq:ping} and~\eqref{eq:pong} hold.
\end{corollary}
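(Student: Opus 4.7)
The plan is to proceed by mollification, paralleling the argument of \cref{res:BV_alpha_p_leibniz_Sob_embedding} and \cref{res:remove_C_p_finite} but now at the endpoint $p=+\infty$. Fix a standard family of mollifiers $(\rho_\eps)_{\eps>0}$ and set $g_\eps=\rho_\eps* g$. Since $W^{\alpha,1}(\R^n)=B^\alpha_{1,1}(\R^n)$ (as can be checked directly from the Slobodeckij and Besov seminorms defined in \cref{subsec:notation}), the mollifications satisfy $g_\eps\in C_b(\R^n)\cap B^\alpha_{1,1}(\R^n)$ with uniform bounds $\|g_\eps\|_{L^\infty(\R^n)}\le\|g\|_{L^\infty(\R^n)}$ and $[g_\eps]_{B^\alpha_{1,1}(\R^n)}\le[g]_{B^\alpha_{1,1}(\R^n)}$. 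Applying \cref{res:BV_alpha_p_leibniz_C_b} with the conjugate exponents $p=\infty$ and $q=1$ to the pair $(f,g_\eps)$ yields
\begin{equation*}
D^\alpha(fg_\eps)=g_\eps\, D^\alpha f+f\,\nabla^\alpha g_\eps\,\Leb{n}+\nabla^\alpha_{\rm NL}(f,g_\eps)\,\Leb{n}
\quad\text{in}\ \M(\R^n;\R^n)
\end{equation*}
for every $\eps>0$, and the task is to pass to the limit $\eps\to0^+$ in the distributional sense.

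Testing against an arbitrary $\phi\in C^\infty_c(\R^n;\R^n)$, three of the four resulting terms converge by routine arguments: the left-hand side $-\int fg_\eps\,\div^\alpha\phi\,dx$ converges to $-\int fg\,\div^\alpha\phi\,dx$ by dominated convergence (using $|fg_\eps|\le\|f\|_{L^\infty}\|g\|_{L^\infty}$ and $\div^\alpha\phi\in L^1\cap L^\infty$); the term involving $\nabla^\alpha g_\eps$ converges because $\nabla^\alpha g_\eps=\rho_\eps*\nabla^\alpha g\to\nabla^\alpha g$ in $L^1(\R^n;\R^n)$, using that $\nabla^\alpha g\in L^1(\R^n;\R^n)$ by \cref{res:nabla_alpha_div_alpha_in_L_p}; and the non-local term converges because
\begin{equation*}
\|\nabla^\alpha_{\rm NL}(f,g_\eps-g)\|_{L^1(\R^n;\R^n)}\le 2\mu_{n,\alpha}\,\|f\|_{L^\infty(\R^n)}\,[g_\eps-g]_{B^\alpha_{1,1}(\R^n)}\longrightarrow 0
\end{equation*}
by \cref{res:nabla_alpha_NL_div_alpha_NL_int} and the standard convergence of mollifications in Besov seminorms.

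The main obstacle is the convergence of the measure pairing $\int\phi\cdot g_\eps\,dD^\alpha f\to\int\phi\cdot g^\star\,dD^\alpha f$. Since $\|g_\eps\|_{L^\infty}\le\|g\|_{L^\infty}$ and $|D^\alpha f|$ is a finite measure, dominated convergence reduces the problem to showing that $g_\eps\to g^\star$ $|D^\alpha f|$-almost everywhere. Now \cite{CSS21}*{Theorem~1.1} gives $|D^\alpha f|\ll\Haus{n-\alpha}$ sharply (this is the case $p=+\infty$, corresponding to the Hausdorff exponent $n-\alpha-n/p=n-\alpha$), and for $g\in B^\alpha_{1,1}(\R^n)$ the mollifications converge to $g^\star$ outside a suitable capacitary exceptional set via \cite{CSS21}*{Theorem~5.4}. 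The difficulty is that the standard inequality $\Haus{n-\alpha+\delta}\ll\Capa_{\alpha,1}$ from \cite{AH96}*{Corollary~5.1.14} only holds for $\delta>0$, so it only yields that the exceptional set has Hausdorff dimension at most $n-\alpha$, not that its $\Haus{n-\alpha}$-measure itself is zero. To close the gap I would either appeal to a refined capacity inequality specialised to $p=\infty$ (in the spirit of the Meyers--Ziemer-type sharp statement that the precise representative of a $W^{\alpha,1}$ function is defined $\Haus{n-\alpha}$-a.e.), or else use a two-step approximation: first replace $g$ by a sequence in $L^\infty(\R^n)\cap B^\beta_{1,1}(\R^n)$ with $\beta>\alpha$ (available because $C^\infty_c(\R^n)$ is dense in $W^{\alpha,1}(\R^n)$ and embeds in every $B^\beta_{1,1}(\R^n)$), invoke \cref{res:remove_C_p_finite} to obtain the Leibniz rule for each such approximant, and then close the proof by a second limiting argument in $W^{\alpha,1}$ relying on the continuity of the remaining three terms already established above.

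Once \eqref{eq:pallina} is proved, the membership $fg\in BV^{\alpha,1}(\R^n)\cap BV^{\alpha,\infty}(\R^n)$ and the identities \eqref{eq:ping} and \eqref{eq:pong} follow as in the final step of the proof of \cref{res:BV_alpha_p_leibniz_C_b}: the vanishing of $D^\alpha(fg)(\R^n)$ is a consequence of \cref{res:zero_total_mesure} applied to $fg\in BV^\alpha(\R^n)$, the vanishing of $\int\nabla^\alpha_{\rm NL}(f,g)\,dx$ follows by pairing with constant vectors and invoking \cref{res:div_nabla_NL_ibp}, and the integration-by-parts identity \eqref{eq:pong} is then a direct combination of \eqref{eq:pallina} with the two preceding identities.
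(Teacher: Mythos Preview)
Your overall strategy---mollify $g$, apply \cref{res:BV_alpha_p_leibniz_C_b} with $p=\infty$, $q=1$, and pass to the limit term by term---is exactly the paper's approach, and your handling of the three ``easy'' terms is correct. You also correctly isolate the crux: at $p=+\infty$ one has $|D^\alpha f|\ll\Haus{n-\alpha}$ with no slack, while the capacity comparison $\Haus{n-\alpha+\delta}\ll\Capa_{\alpha,1}$ only holds for $\delta>0$, so the route via \cite{CSS21}*{Theorem~5.4} plus \cite{AH96} does not close. (Note also that \cite{CSS21}*{Theorem~5.4}, as invoked in the proof of \cref{res:BV_alpha_p_leibniz_Sob_embedding}, is applied with $q>1$; the case $q=1$ is not covered there.)

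The paper resolves this precisely along the lines of your first suggestion: it cites the sharp result of Ponce--Spector, \cite{PS20}*{Proposition~3.1}, which gives directly that for $g\in W^{\alpha,1}(\R^n)$ the precise representative $g^\star$ is well defined $\Haus{n-\alpha}$-a.e.; combined with $|D^\alpha f|\ll\Haus{n-\alpha}$ this yields $g_\eps\to g^\star$ $|D^\alpha f|$-a.e.\ and the proof concludes. So your ``Meyers--Ziemer-type sharp statement'' is exactly the missing ingredient, and it is available in the literature.

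Your alternative two-step approximation, however, does not work as stated. \cref{res:remove_C_p_finite} is formulated only for finite $p$, so you cannot invoke it at $p=+\infty$; if instead you approximate by $g_k\in C^\infty_c(\R^n)$ and use \cref{res:BV_alpha_p_leibniz_C_b}, the same difficulty reappears in the second limit: convergence of the other three terms forces the sequence $\int\phi\,g_k\,dD^\alpha f$ to converge, but identifying its limit with $\int\phi\,g^\star\,dD^\alpha f$ still requires $g_k\to g^\star$ $|D^\alpha f|$-a.e., which is exactly the $\Haus{n-\alpha}$-a.e.\ statement you were trying to avoid. So this route is circular; the sharp precise-representative result is genuinely needed.
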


\begin{proof}
Note that $fg\in L^1(\R^n)\cap L^{\infty}(\R^n)$.
Let $(\rho_\eps)_{\eps>0}$ be a family of standard mollifiers (see~\cite{CS19}*{Section~3.3} for example) and let $g_\eps=\rho_\eps*g$ for all $\eps>0$. 
The proof now goes as for \cref{res:BV_alpha_p_leibniz_Sob_embedding}, again the only thing we need to prove being~\eqref{eq:leibniz_claim_g_eps_limit}.
Now, by~\cite{PS20}*{Proposition~3.1}, we know that the limit in~\eqref{eq:g_eps_limit_quasievery_point} is valid for some set $D_g\subset\R^n$ such that $\Haus{n-\alpha}(D_g)=0$.
Since  
$p=+\infty$ and 
$|D^\alpha f|\ll\Haus{n-\alpha}$
by~\cite{CSS21}*{Theorem~1.1(ii)}, the conclusion immediately follows.
\end{proof}

\subsection{Some Gauss--Green formulas}

\label{subsec:GG_formulas}

We conclude this section by briefly discussing some Gauss--Green formulas in the whole space.

We emphasize the following particular cases of \cref{res:remove_C_p_infty}.
Note that \cref{res:ggf_frac}  below generalizes the fractional Gauss--Green formula proved in~\cite{CS19}*{Theorem~4.2} on~$\R^n$. 
For the definitions of the fractional reduced boundary $\redb^\alpha E$ of a set~$E$ with finite fractional Caccioppoli $\alpha$-perimeter and of its measure theoretic inner unit fractional normal $\nu_E^{\alpha}$, we refer the reader to~\cite{CS19}*{Definition 4.7} 

\begin{corollary}[Generalized fractional Gauss--Green formulas]
\label{res:ggf_frac}
Let $\alpha\in(0,1)$.
\begin{enumerate}[(i)]

\item 
If $f\in BV^{\alpha,\infty}(\R^n)$, then
\begin{equation*}
\int_{E^{1}}
dD^\alpha f
=
-\int_{\redb^\alpha E}
f\,\nu^\alpha_E\,|\nabla^\alpha\chi_E|
\,dx
\end{equation*}
for all measurable sets $E\subset\R^n$ such that $\chi_E\in W^{\alpha,1}(\R^n)$, where
\begin{equation*}
E^{1}
=
\set*{x\in\R^n : \exists\lim_{r\to0^+}\frac{|E\cap B_r(x)|}{|B_r(x)|}=1}.
\end{equation*} 	

\item
If $f\in W^{\alpha,1}(\R^n)\cap L^\infty(\R^n)$, then 
\begin{equation*}
\int_E 
\nabla^\alpha f
\,dx
=
-
\int_{\redb^\alpha E}
f^\star\,\nu^\alpha_E
\,
d|D^\alpha\chi_E|
\end{equation*}
for all measurable sets $E\subset\R^n$ such that $|D^\alpha\chi_E|(\R^n)<+\infty$.

\end{enumerate}
\end{corollary}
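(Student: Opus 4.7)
The plan is to derive both identities as direct consequences of \cref{res:remove_C_p_infty} applied to the product of the given function with a characteristic function, exploiting in particular the integration-by-parts equality~\eqref{eq:pong} and then translating the abstract statement into the geometric language of the fractional reduced boundary.

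For part (i), I would take $g = \chi_E \in L^\infty(\R^n) \cap W^{\alpha,1}(\R^n)$ and keep $f$ as the $BV^{\alpha,\infty}$ factor. Then \cref{res:remove_C_p_infty} applies and formula~\eqref{eq:pong} reads
\begin{equation*}
\int_{\R^n} f \, \nabla^\alpha \chi_E \, dx = -\int_{\R^n} \chi_E^\star \, dD^\alpha f.
\end{equation*}
The rest is an essentially algebraic identification. On the right-hand side, $\chi_E^\star = \chi_{E^1}$ wherever the precise representative~\eqref{eqi:def_prec_rep} exists, so the integral equals $-\int_{E^1} dD^\alpha f$; on the left-hand side, $\chi_E \in W^{\alpha,1}(\R^n)$ gives $D^\alpha \chi_E = \nabla^\alpha \chi_E \, \Leb{n}$, so that $|D^\alpha \chi_E| = |\nabla^\alpha \chi_E| \, \Leb{n}$, the set $\redb^\alpha E$ coincides up to a $\Leb{n}$-negligible set with $\{\nabla^\alpha \chi_E \ne 0\}$, and $\nu_E^\alpha = \nabla^\alpha \chi_E / |\nabla^\alpha \chi_E|$ there. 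Substituting recovers the claimed formula.

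For part (ii), the plan is to swap the roles: apply \cref{res:remove_C_p_infty} with $\chi_E$ in place of $f$ (which is legitimate because $\chi_E \in L^\infty(\R^n)$ and $|D^\alpha \chi_E|(\R^n) < +\infty$ give $\chi_E \in BV^{\alpha,\infty}(\R^n)$) and with $f$ in place of $g$. Then~\eqref{eq:pong} reads
\begin{equation*}
\int_E \nabla^\alpha f \, dx = \int_{\R^n} \chi_E \, \nabla^\alpha f \, dx = -\int_{\R^n} f^\star \, dD^\alpha \chi_E,
\end{equation*}
and the claim follows by writing $D^\alpha \chi_E = \nu_E^\alpha \, |D^\alpha \chi_E|$ and recalling that $|D^\alpha \chi_E|$ is concentrated on $\redb^\alpha E$ by the very definition of the fractional reduced boundary.

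I do not anticipate a substantial obstacle here: the only point requiring care is that $\chi_E^\star$ (resp.\ $f^\star$) be defined $|D^\alpha f|$-a.e.\ (resp.\ $|D^\alpha \chi_E|$-a.e.), which is precisely the absolute-continuity consideration already built into the hypotheses of \cref{res:remove_C_p_infty} through $|D^\alpha\,\cdot\,| \ll \Haus{n-\alpha}$ for $BV^{\alpha,\infty}$ functions together with the existence of the precise representative of Besov/Sobolev functions outside a set of vanishing $\Haus{n-\alpha}$-measure. Therefore, the entire work reduces to recognizing the abstract Leibniz identity as a Gauss--Green formula on $E$.
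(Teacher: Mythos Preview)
Your approach is exactly the paper's: it states the corollary as a ``particular case'' of \cref{res:remove_C_p_infty} (via~\eqref{eq:pong}) without further proof, and your two specializations $g=\chi_E$ and $f\leftrightarrow\chi_E$ are the intended ones. One small correction for part~(i): the assertion ``$\chi_E^\star = \chi_{E^1}$ wherever the precise representative exists'' is not literally true, since at points of the essential boundary the density (and hence $\chi_E^\star$) may lie strictly in $(0,1)$; what you actually need is that such points form a $|D^\alpha f|$-null set, which follows because the Lebesgue-point property for $W^{\alpha,1}$ functions holds $\Haus{n-\alpha}$-a.e.\ (see \cite{PS20}*{Proposition~3.1}) and $|D^\alpha f|\ll\Haus{n-\alpha}$---for a characteristic function, being a Lebesgue point forces the density to be $0$ or~$1$.
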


\begin{remark}
By \cref{res:remove_C_p_infty},  if $f\in W^{\alpha,1}(\R^n)\cap L^\infty(\R^n)$, then
\begin{equation*}
\int_{\R^n} f\, \nabla^{\alpha} f \, dx = 0.
\end{equation*}
Consequently, we have
\begin{equation*}
\int_{E \cap \redb^\alpha E} \nu^\alpha_E\,|\nabla^\alpha\chi_E| \, dx = \int_{(\redb^\alpha E) \setminus E} \nu^\alpha_E\,|\nabla^\alpha\chi_E| \, dx = 0
\end{equation*}
for all measurable sets $E\subset\R^n$ such that $\chi_E\in W^{\alpha,1}(\R^n)$.
\end{remark}

As it is apparent from \cref{res:remove_C_p_infty} and \cref{res:ggf_frac}, because of the problem of the well-posedness of the product $g^\star D^\alpha f$, we are forced to pair a function $f$ with bounded fractional variation together with a function $g$ possessing a stronger $W^{\alpha,1}$ regularity.
\cref{res:gg_S_alpha_1} below shows that, if we are not interested in the fractional variation measure, both functions $f$ and $g$ can have the same $S^{\alpha,1}$ regularity.

\begin{proposition}[Gauss--Green formula in $S^{\alpha,1}\cap L^\infty$]
\label{res:gg_S_alpha_1}
Let $\alpha\in(0,1)$.
If $f,g\in S^{\alpha,1}(\R^n)\cap L^\infty(\R^n)$, then
\begin{equation}
\label{eq:gg_S_alpha_1}
\int_{\R^n}f\,\nabla^\alpha g\,dx
=
-\int_{\R^n}g\,\nabla^\alpha f\,dx.
\end{equation}	
\end{proposition}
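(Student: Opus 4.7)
The plan is to establish the identity first in the base case $f\in C^\infty_c(\R^n)$, $g\in S^{\alpha,1}(\R^n)\cap L^\infty(\R^n)$, and then to extend it by approximating a general $f\in S^{\alpha,1}(\R^n)\cap L^\infty(\R^n)$ by a sequence $f_k\in C^\infty_c(\R^n)$ with $\|f_k\|_{L^\infty(\R^n)}\le\|f\|_{L^\infty(\R^n)}$, $f_k\to f$ in $L^1(\R^n)$, and $\nabla^\alpha f_k\to\nabla^\alpha f$ in $L^1(\R^n;\R^n)$. The base case is immediate: for any constant vector $v\in\R^n$, the field $\phi:=fv$ lies in $C^\infty_c(\R^n;\R^n)$ and a direct computation from the definition of $\div^\alpha$ gives $\div^\alpha\phi=v\cdot\nabla^\alpha f$; substituting $\phi$ in the integration-by-parts formula~\eqref{eq:ibp_Besov} defining $\nabla^\alpha g$ yields $v\cdot\int_{\R^n}g\,\nabla^\alpha f\,dx=-v\cdot\int_{\R^n}f\,\nabla^\alpha g\,dx$, and arbitrariness of $v$ produces~\eqref{eq:gg_S_alpha_1}. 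Once such an approximating sequence $\{f_k\}$ is available, applying the base case to the pair $(f_k,g)$ and passing to the limit produces~\eqref{eq:gg_S_alpha_1} in full generality: the right-hand side converges by the $L^\infty$-bound on $g$ and the $L^1$-convergence of $\nabla^\alpha f_k$, while the left-hand side converges by dominated convergence via $|f_k|\le\|f\|_{L^\infty(\R^n)}$, $\nabla^\alpha g\in L^1(\R^n;\R^n)$, and a.e.\ convergence of a subsequence.

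For the construction, I will take $f_k := \rho_{\eps_k}*(\eta_k f)$, where $\eta\in C^\infty_c(\R^n)$ is a standard cutoff with $\eta\equiv 1$ on $B_1$ and $0\le\eta\le 1$ supported in $B_2$, $\eta_k(x):=\eta(x/k)$, and $\eps_k\downarrow 0$ is chosen diagonally. Since $|D^\alpha f|(\R^n)=\|\nabla^\alpha f\|_{L^1(\R^n;\,\R^n)}<+\infty$ we have $f\in BV^{\alpha,\infty}(\R^n)$, and since $\eta_k\in L^\infty(\R^n)\cap W^{\alpha,1}(\R^n)$, \cref{res:remove_C_p_infty} applied to the pair $(f,\eta_k)$ gives $\eta_k f\in S^{\alpha,1}(\R^n)$ with
\begin{equation*}
\nabla^\alpha(\eta_k f) = \eta_k\,\nabla^\alpha f + f\,\nabla^\alpha\eta_k + \nabla^\alpha_{\NL}(f,\eta_k).
\end{equation*}
The first summand converges to $\nabla^\alpha f$ in $L^1(\R^n;\R^n)$ by dominated convergence, so the crux is to show that the remainder $f\,\nabla^\alpha\eta_k+\nabla^\alpha_{\NL}(f,\eta_k)$ also vanishes in $L^1(\R^n;\R^n)$.

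This cancellation is the main (and only genuine) obstacle: individually, each of $\|f\,\nabla^\alpha\eta_k\|_{L^1(\R^n;\,\R^n)}$ and $\|\nabla^\alpha_{\NL}(f,\eta_k)\|_{L^1(\R^n;\,\R^n)}$ blows up like $k^{n-\alpha}$, so the estimate must exploit the combined expression. Expanding $f(y)=f(x)+(f(y)-f(x))$ inside the integral definitions produces the pointwise identity
\begin{equation*}
f(x)\,\nabla^\alpha\eta_k(x) + \nabla^\alpha_{\NL}(f,\eta_k)(x) = \mu_{n,\alpha}\int_{\R^n}\frac{(y-x)\,f(y)\,(\eta_k(y)-\eta_k(x))}{|y-x|^{n+\alpha+1}}\,dy,
\end{equation*}
whence Tonelli's theorem gives
\begin{equation*}
\|f\,\nabla^\alpha\eta_k+\nabla^\alpha_{\NL}(f,\eta_k)\|_{L^1(\R^n;\,\R^n)}\le\mu_{n,\alpha}\int_{\R^n}|f(y)|\,\mathcal D^\alpha\eta_k(y)\,dy.
\end{equation*}
The elementary scaling $\mathcal D^\alpha\eta_k(y)=k^{-\alpha}\mathcal D^\alpha\eta(y/k)$ combined with the fact that $\mathcal D^\alpha\eta\in L^\infty(\R^n)$ for $\eta\in C^\infty_c(\R^n)$ bounds the right-hand side by $\mu_{n,\alpha}k^{-\alpha}\|f\|_{L^1(\R^n)}\|\mathcal D^\alpha\eta\|_{L^\infty(\R^n)}$, which vanishes as $k\to+\infty$ thanks to the inclusion $S^{\alpha,1}(\R^n)\subset L^1(\R^n)$. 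With the $L^1$-convergence $\nabla^\alpha(\eta_k f)\to\nabla^\alpha f$ in hand, a standard diagonal choice of $\eps_k\downarrow 0$ provides $f_k\in C^\infty_c(\R^n)$ with all the required properties, and the proof is complete.
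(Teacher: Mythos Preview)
Your proof is correct and follows the same approximation strategy as the paper, but is more self-contained: the paper simply cites \cite{CS19}*{Theorems~3.22 and~3.23} for the existence of $C^\infty_c$ approximants of both $f$ and $g$ in $S^{\alpha,1}$ with uniform $L^\infty$ bounds, applies the smooth duality, and passes to the limit. You instead (a) observe a clean ``half-smooth'' base case---for $f\in C^\infty_c(\R^n)$ the identity follows directly from the very definition of the weak gradient of $g$---so only $f$ needs approximating, and (b) build the approximation $f_k=\rho_{\eps_k}*(\eta_k f)$ explicitly and verify $\nabla^\alpha(\eta_k f)\to\nabla^\alpha f$ in $L^1$ via the Leibniz rule of \cref{res:remove_C_p_infty}. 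Both routes are short; yours trades a citation for a few lines of computation.

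One remark: your motivational claim that $\|f\,\nabla^\alpha\eta_k\|_{L^1}$ and $\|\nabla^\alpha_{\NL}(f,\eta_k)\|_{L^1}$ individually blow up like $k^{n-\alpha}$ is false. Since $f\in L^1(\R^n)$ (which you invoke anyway at the end), the scaling $\nabla^\alpha\eta_k(x)=k^{-\alpha}(\nabla^\alpha\eta)(x/k)$ gives $\|f\,\nabla^\alpha\eta_k\|_{L^1}\le k^{-\alpha}\|f\|_{L^1}\|\nabla^\alpha\eta\|_{L^\infty}\to 0$, and \cref{res:nabla_alpha_NL_div_alpha_NL_int} with $(p,q)=(1,\infty)$ gives $\|\nabla^\alpha_{\NL}(f,\eta_k)\|_{L^1}\le 2\mu_{n,\alpha}\|f\|_{L^1}[\eta_k]_{B^\alpha_{\infty,1}}=2\mu_{n,\alpha}k^{-\alpha}\|f\|_{L^1}[\eta]_{B^\alpha_{\infty,1}}\to 0$. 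So no cancellation is needed and your combined estimate, while correct, is unnecessary; the blow-up you describe would only occur if one used merely $f\in L^\infty$ and paired it with $L^1$-type norms of $\eta_k$.
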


\begin{proof}
Since $f,g\in S^{\alpha,1}(\R^n)\cap L^\infty(\R^n)$, both sides of~\eqref{eq:gg_S_alpha_1} are well defined.
Moreover, we can find $(f_k)_{k\in\N},(g_k)_{k\in\N}\subset C^\infty_c(\R^n)$ such that $f_k\to f$ and $g_k\to g$ in $S^{\alpha,1}(\R^n)$ and pointwise a.e.\ in~$\R^n$ as $k\to+\infty$, with $\|f_k\|_{L^\infty(\R^n)}\le\|f\|_{L^\infty(\R^n)}$ and $\|g_k\|_{L^\infty(\R^n)}\le\|g\|_{L^\infty(\R^n)}$ for all $k\in\N$.
This statement easily follows from the proofs of~\cite{CS19}*{Theorems~3.22 and~3.23}, so we leave the details to the interested reader.
We clearly have that
\begin{equation*}
\int_{\R^n}f_k\,\nabla^\alpha g_k\,dx
=
-\int_{\R^n}g_k\,\nabla^\alpha f_k\,dx
\end{equation*}
for all $k\in\N$.
We can now  observe that 
\begin{align*}
\bigg|
\int_{\R^n}f_k\,\nabla^\alpha g_k\,dx
-&
\int_{\R^n}f\,\nabla^\alpha g\,dx
\,
\bigg|
\le
\int_{\R^n}|f_k|\,|\nabla^\alpha g_k- \nabla^\alpha g|\,dx
+
\int_{\R^n}|f_k-f|\,|\nabla^\alpha g|\,dx
\\
&\le
\|f\|_{L^\infty(\R^n)}
\,
\|\nabla^\alpha g_k- \nabla^\alpha g\|_{L^1(\R^n;\,\R^n)}
+
\int_{\R^n}|f_k-f|\,|\nabla^\alpha g|\,dx
\end{align*}
so that 
\begin{equation*}
\lim_{k\to+\infty}
\int_{\R^n}f_k\,\nabla^\alpha g_k\,dx
=
\int_{\R^n}f\,\nabla^\alpha g\,dx
\end{equation*}
by the Dominated Convergence Theorem.
A similar reasoning provides the other limit
\begin{equation*}
\lim_{k\to+\infty}
\int_{\R^n}g_k\,\nabla^\alpha f_k\,dx
=
\int_{\R^n}g\,\nabla^\alpha f\,dx	
\end{equation*}
and the proof is thus complete.
\end{proof}

\section{Leibniz rules for \texorpdfstring{$S^{\alpha,p}$}{Sˆ(alpha,p)} functions}

\label{sec:leibniz_bessel}

In this section, we prove our second and third main results, \cref{resi:lebniz_bessel_conjugate} and \cref{resi:lebniz_bessel_bounded} respectively.

\subsection{Products of \texorpdfstring{$S^{\alpha,p}$}{Sˆ(alpha,p)} and Besov functions}

Similarly to \cref{res:BV_alpha_p_leibniz_C_b}, we can prove the following Leibniz rule for the product of $S^{\alpha,p}$ and Besov functions.
\cref{res:S_alpha_p_leibniz_besov} below generalizes~\cite{BCM20}*{Lemma~3.4} and~\cite{KS21}*{Lemma~2.11}.

\begin{theorem}[$S^{\alpha,p}\cdot B^\alpha_{q,1}$]
\label{res:S_alpha_p_leibniz_besov}
Let $\alpha\in(0,1)$ and let $p,q,r\in[1,+\infty]$ be such that $\frac1p+\frac1q=\frac1r$.
If $f\in S^{\alpha,p}(\R^n)$ and $g\in B^\alpha_{q,1}(\R^n)$, then $fg\in S^{\alpha,r}(\R^n)$ with
\begin{equation}
\nabla^\alpha(fg)
=
g\,\nabla^\alpha f
+
f\,\nabla^\alpha g
+
\nabla^\alpha_{\rm NL}(f,g)\quad
\text{in}\
L^r(\R^n;\R^n).	
\end{equation}	
\end{theorem}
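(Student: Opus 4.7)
The plan is to mirror the proof of \cref{res:BV_alpha_p_leibniz_C_b}, relying on the Leibniz rule for Besov functions (\cref{res:nabla_alpha_div_alpha_leibniz_besov}), the Besov integration-by-parts formula (\cref{res:nabla_div_ibp_Besov}), and the non-local duality (\cref{res:div_nabla_NL_ibp}). As a first step, I would observe that $fg \in L^r(\R^n)$ by H\"older's inequality, since $g \in L^q(\R^n)$ by the continuous inclusion $B^{\alpha}_{q,1}(\R^n) \subset L^q(\R^n)$; likewise, the three candidate summands $g\,\nabla^\alpha f$, $f\,\nabla^\alpha g$ and $\nabla^\alpha_{\rm NL}(f,g)$ all belong to $L^r(\R^n;\R^n)$, the first two by H\"older (invoking \cref{res:nabla_alpha_div_alpha_in_L_p} for $\nabla^\alpha g \in L^q$) and the third by \cref{res:nabla_alpha_NL_div_alpha_NL_int}.

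Fix $\phi\in C^\infty_c(\R^n;\R^n)$ and test the claim by computing $\int_{\R^n} fg\,\div^\alpha\phi\,dx$. Applying the Besov Leibniz rule to $g \in B^{\alpha}_{q,1}(\R^n)$ and $\phi \in B^{\alpha}_{s,1}(\R^n;\R^n)$ (which holds for every $s \in [1,+\infty]$ since $\phi$ is smooth and compactly supported) yields
\begin{equation*}
\div^\alpha(g\phi)
=
g\,\div^\alpha\phi + \phi\cdot\nabla^\alpha g + \div^\alpha_{\rm NL}(g,\phi)
\quad \text{in}\ L^q(\R^n),
\end{equation*}
so that
\begin{equation*}
\int_{\R^n}fg\,\div^\alpha\phi\,dx
=
\int_{\R^n}f\,\div^\alpha(g\phi)\,dx
-\int_{\R^n}f\phi\cdot\nabla^\alpha g\,dx
-\int_{\R^n}f\,\div^\alpha_{\rm NL}(g,\phi)\,dx.
\end{equation*}
The third integral equals $\int_{\R^n}\phi\cdot\nabla^\alpha_{\rm NL}(f,g)\,dx$ by \cref{res:div_nabla_NL_ibp}. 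For the first, I would approximate $f$ by the mollifications $f_\eps = \rho_\eps * f$, which lie in $W^{1,p}(\R^n)\subset B^{\alpha}_{p,1}(\R^n)$ as in the proof of \cref{res:BV_alpha_p_leibniz_C_b}; a direct estimate of the $B^{\alpha}_{p',1}$-seminorm of the product of $g$ (restricted to a neighborhood of $\operatorname{supp}\phi$ via H\"older, using that $r\ge 1$ is equivalent to $q\ge p'$) with the smooth compactly supported $\phi$ shows that $g\phi \in B^{\alpha}_{p',1}(\R^n;\R^n)$, so \cref{res:nabla_div_ibp_Besov} gives
\begin{equation*}
\int_{\R^n}f_\eps\,\div^\alpha(g\phi)\,dx
=
-\int_{\R^n}(g\phi)\cdot\nabla^\alpha f_\eps\,dx.
\end{equation*}

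The final step is letting $\eps\to 0^+$. For $p\in[1,+\infty)$, both sides converge via H\"older since $f_\eps \to f$ and $\nabla^\alpha f_\eps = \rho_\eps * \nabla^\alpha f \to \nabla^\alpha f$ in $L^p(\R^n)$, and the constraint $r \ge 1$ ensures that $\div^\alpha(g\phi)$ and $g\phi$ lie in the required $L^{p'}$-spaces (here one also uses $\div^\alpha(g\phi)\in L^1(\R^n)\cap L^q(\R^n)$ from \cref{res:nabla_alpha_div_alpha_in_L_p} together with the compact support of $\phi$). The endpoint $p=+\infty$ is handled by the Dominated Convergence Theorem, exploiting $\|f_\eps\|_{L^\infty}\le\|f\|_{L^\infty}$, the pointwise a.e.\ convergence of $f_\eps$ and $\nabla^\alpha f_\eps$, and the $L^1$ decay at infinity of $\div^\alpha(g\phi)$ and $\div^\alpha_{\rm NL}(g,\phi)$ guaranteed by $\phi\in C^\infty_c(\R^n;\R^n)$; in the non-local term one uses the duality of \cref{res:div_nabla_NL_ibp} in reverse to rewrite $\int\phi\cdot\nabla^\alpha_{\rm NL}(f_\eps,g)\,dx = \int f_\eps\,\div^\alpha_{\rm NL}(g,\phi)\,dx$ before applying dominated convergence. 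Collecting the limits,
\begin{equation*}
\int_{\R^n} fg\,\div^\alpha\phi\,dx
=
-\int_{\R^n}\phi\cdot\big[g\,\nabla^\alpha f + f\,\nabla^\alpha g + \nabla^\alpha_{\rm NL}(f,g)\big]\,dx
\end{equation*}
for every $\phi\in C^\infty_c(\R^n;\R^n)$, which simultaneously shows that $fg\in S^{\alpha,r}(\R^n)$ and identifies its weak $\alpha$-fractional gradient with the stated sum. The most delicate point is this $p=+\infty$ endpoint, where norm convergence of mollifications fails and one must carefully rely on the decay of the test-dependent kernels; the rest is a straightforward adaptation of the reasoning in \cref{res:BV_alpha_p_leibniz_C_b}.
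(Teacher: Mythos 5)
Your proposal is correct and follows essentially the same route as the paper: decompose $\int fg\,\div^\alpha\phi\,dx$ via the Besov Leibniz rule for $\div^\alpha(g\phi)$, convert the non-local term by the duality of \cref{res:div_nabla_NL_ibp}, and handle the remaining term by mollifying $f$ (so that $f_\eps\in B^\alpha_{p,1}$) and invoking the Besov integration-by-parts formula \cref{res:nabla_div_ibp_Besov} before passing to the limit via H\"older (for $p<\infty$) or dominated convergence (for $p=\infty$). The only cosmetic difference is that you suggest re-mollifying $f$ inside the non-local term at the endpoint $p=\infty$, which is unnecessary since \cref{res:div_nabla_NL_ibp} applies directly to $f\in L^\infty$, $g\in b^\alpha_{q,1}$, $\phi\in L^{q'}$; this extra step is harmless but redundant.
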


\begin{proof}
We clearly have that $fg\in L^r(\R^n)$ by H\"older's (generalized) inequality.
We now let $\phi\in\Lip_c(\R^n;\R^n)$ be given. 
By \cref{res:nabla_alpha_div_alpha_leibniz_besov}\eqref{item:div_alpha_leibniz_besov}, we can write 
\begin{equation*}
\div^\alpha(g\phi)
=
g\,\div^\alpha\phi
+
\phi\cdot\nabla^\alpha g
+\div_{\rm NL}^\alpha(g,\phi)
\quad
\text{in}\
L^{p'}(\R^n),
\end{equation*} 
where $\frac1p+\frac1{p'}=1$, so that
\begin{align*}
\int_{\R^n}fg\,\div^\alpha\phi\,dx 
=
\int_{\R^n}f\,
\div^\alpha(g\phi)
\,dx
-
\int_{\R^n}
f\phi\cdot\nabla^\alpha g
\,dx
-
\int_{\R^n}
f\,\div^\alpha_{\rm NL}(g,\phi)\,
dx.
\end{align*}
By \cref{res:div_nabla_NL_ibp}, we have that
\begin{equation*}
\int_{\R^n}
f\,\div^\alpha_{\rm NL}(g,\phi)\,
dx
=
\int_{\R^n}
\phi\cdot\nabla^\alpha_{\rm NL}(f,g)\,
dx.
\end{equation*}
Now let $(\rho_\eps)_{\eps>0}$ be a family of standard mollifiers (see~\cite{CS19}*{Section~3.3} for example) and let $(f_\eps)_{\eps>0}\subset S^{\alpha,p}(\R^n)\cap C^\infty(\R^n)$ be given by $f_\eps=f*\rho_\eps$ for all $\eps>0$. 
Arguing as in the proof of \cref{res:BV_alpha_p_leibniz_C_b}, we see that $f_\eps\in B^\alpha_{p,1}(\R^n)$ for each $\eps>0$. It is easy to check that $g\phi\in B^\alpha_{p',1}(\R^n;\R^n)$, so that by \cref{res:nabla_div_ibp_Besov} we can write
\begin{equation*}
\int_{\R^n}f_\eps\,
\div^\alpha(g\phi)
\,dx
=
-
\int_{\R^n}g\phi\cdot
\nabla^\alpha f_\eps\,dx
\end{equation*}
for all $\eps>0$.
On the one side, we have
\begin{equation*}
\lim_{\eps\to0^+}
\int_{\R^n}f_\eps\,
\div^\alpha(g\phi)
\,dx
=
\int_{\R^n}
f\,
\div^\alpha(g\phi)
\,dx
\end{equation*}
by H\"older's inequality in the case $p<+\infty$ and by the Dominated Convergence Theorem in the case $p=+\infty$.
On the other side, we also have
\begin{equation*}
\lim_{\eps\to0^+}
\int_{\R^n}g\phi\cdot
\nabla^\alpha f_\eps\,dx
=
\int_{\R^n}g\phi\cdot
\nabla^\alpha f\,dx
\end{equation*}
again by H\"older's inequality in the case $p<+\infty$ and by the Dominated Convergence Theorem in the case $p=+\infty$,
since $\nabla^\alpha f_\eps=\rho_\eps*\nabla^\alpha f$ in $L^p(\R^n;\R^n)$ for all $\eps>0$.
We thus conclude that 
\begin{equation*}
\int_{\R^n}f\,
\div^\alpha(g\phi)
\,dx
=
-
\int_{\R^n}g\phi\cdot
\nabla^\alpha f\,dx,
\end{equation*}
so that
\begin{equation*}
\int_{\R^n}fg\,\div^\alpha\phi\,dx 
=
-
\int_{\R^n}
\phi\cdot
\big(
g\,\nabla^\alpha f
+
f\,\nabla^\alpha g
+
\nabla^\alpha_{\rm NL}(f,g)
\big)
\,
dx
\end{equation*}
for any $\phi\in\Lip_c(\R^n;\R^n)$ and the proof is complete.
\end{proof}

\subsection{Products of two Bessel functions}

We are now ready to prove our second main result \cref{resi:lebniz_bessel_conjugate}, dealing with the product of two functions in $S^{\alpha,p}$ and $S^{\alpha,q}$ respectively, with $p,q\in(1,+\infty)$ such that $\frac1p+\frac1q\in(0,1]$.

\begin{proof}[Proof of \cref{resi:lebniz_bessel_conjugate}]
By~\cite{BCCS20}*{Theorem~A.1}, we can find $(f_k)_{k\in\N},(g_k)_{k\in\N}\subset C^\infty_c(\R^n)$ such that $f_k\to f$ in $S^{\alpha,p}(\R^n)$ and $g_k\to g$ in $S^{\alpha,q}(\R^n)$ as $k\to+\infty$.
Recalling~\cite{CS19}*{Lemma~2.6}, for each $k\in\N$ we can write
\begin{equation}
\label{eq:z04}
\nabla^\alpha(f_kg_k)
=
g_k\,\nabla^\alpha f_k
+
f_k\,\nabla^\alpha g_k
+
\nabla^\alpha_{\NL}(f_k,g_k).
\end{equation}
On the one side, we clearly have that 
\begin{equation*}
g_k\,\nabla^\alpha f_k
+
f_k\,\nabla^\alpha g_k
\to
g\,\nabla^\alpha f
+
f\,\nabla^\alpha g
\quad
\text{in}\
L^r(\R^n;\R^n)
\end{equation*}  
as $k\to+\infty$ by H\"older's inequality.
On the other side, by \cref{res:nabla_alpha_NL_div_alpha_NL_int} we can estimate
\begin{align*}
\|
\nabla^\alpha_{\NL}(f_k,g_k)
-
\nabla^\alpha_{\NL}&(f,g)
\|_{L^r(\R^n;\R^n)}
\\
&\le
\|\nabla^\alpha_{\NL}(f_k-f,g_k)
\|_{L^r(\R^n;\R^n)}
+
\|
\nabla^\alpha_{\NL}(f,g_k-g)
\|_{L^r(\R^n;\R^n)}
\\
&\le
\mu_{n,\alpha}
\big(
[f_k-f]_{B^{\frac\alpha2}_{p,2}(\R^n)}
\,
[g_k]_{B^{\frac\alpha2}_{q,2}(\R^n)}
+
[f]_{B^{\frac\alpha2}_{p,2}(\R^n)}
\,
[g_k-g]_{B^{\frac\alpha2}_{q,2}(\R^n)}
\big)
\\
&\le
c_{n,\alpha,p,q}
\big(
\|f_k-f\|_{S^{\alpha,p}(\R^n)}
\,
\|g_k\|_{S^{\alpha,q}(\R^n)}
+
\|f\|_{S^{\alpha,p}(\R^n)}
\,
\|g_k-g\|_{S^{\alpha,q}(\R^n)}
\big)
\end{align*} 
for all $k\in\N$, since one easily sees that    
$
[u]_{B^{\frac\alpha2}_{\ell,2}(\R^N)}
\le
c_{n,\alpha,\ell}
\,
\|u\|_{S^{\alpha,\ell}(\R^n)}
$
for all $u\in S^{\alpha,\ell}(\R^n)$ with $\ell\in[1,+\infty)$, thanks to~\cite{BCCS20}*{Proposition~B.2}.
Therefore also
\begin{equation*}
\nabla^\alpha_{\NL}(f_k,g_k)
\to
\nabla^\alpha_{\NL}(f,g)
\quad
\text{in}\
L^r(\R^n;\R^n)
\end{equation*}
as $k\to+\infty$ and so~\eqref{eq:z08} and~\eqref{eq:z09} readily follow.
For the case $r=1$, we observe that 
\begin{equation*}
\int_{\R^n}f_k\,\nabla^\alpha g_k\,dx
=
-
\int_{\R^n}
g_k\,\nabla^\alpha f_k\,dx
\end{equation*}
for all $k\in\N$, so that~\eqref{eq:z07} immediately follows by passing to the limit thanks to H\"older's inequality again.
Finally, the validity of~\eqref{eq:z06} comes by combining~\eqref{eq:z08} and~\eqref{eq:z07} with \cref{res:zero_total_mesure}. 
The proof is complete.  
\end{proof}

\subsection{An estimate à la Kenig--Ponce--Vega}

The rest of the present section is devoted to the proof of our third main result \cref{resi:lebniz_bessel_bounded}.
To this aim, we need the following preliminary result improving our previous integrability estimate for $\nabla^\alpha_{\NL}$ established in \cref{res:nabla_alpha_NL_div_alpha_NL_int}.

\begin{corollary}[Improved integrability of $\nabla^\alpha_{\NL}$]
\label{res:nabla_div_NL_bmo_int}
Let $\alpha\in(0,1)$ and $p\in(1,+\infty)$.
There exists a constant $c_{n,\alpha,p}>0$, depending on $n$, $\alpha$ and $p$ only, such that
\begin{equation*}
\|\nabla^\alpha_{\NL}(f,g)\|_{L^p(\R^n;\,\R^n)}
\le
c_{n,\alpha,p}
\big(
\|\nabla^\alpha f\|_{L^p(\R^n;\,\R^n)}\,[g]_{\BMO(\R^n)}
+
[f]_{\BMO(\R^n)}\,\|\nabla^\alpha g\|_{L^p(\R^n;\,\R^n)}
\big)
\end{equation*}
for all $f,g\in C^\infty_c(\R^n)$. 
\end{corollary}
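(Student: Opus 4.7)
The plan is to exploit the decomposition~\eqref{eqi:frac_nabla_NL_rewrite} highlighted in the introduction, namely
\begin{equation*}
\nabla^\alpha_{\NL}(f,g)
=
R\bigl(H_\alpha(f,g)\bigr)
+
[R,g](-\Delta)^{\frac\alpha2}f
+
[R,f](-\Delta)^{\frac\alpha2}g,
\end{equation*}
which is valid pointwise for $f,g\in C^\infty_c(\R^n)$. First I would verify this identity by applying the Leibniz rule~\eqref{eqi:leibniz_frac_nabla} (available in the classical sense since $f,g$ are smooth and compactly supported), using $\nabla^\alpha=R(-\Delta)^{\frac\alpha2}$ on each term, and recognising the commutator structure~\eqref{eqi:Riesz_commutator} together with the definition~\eqref{eqi:def_H_alpha} of $H_\alpha$.

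Next, I would estimate the three summands in $L^p(\R^n;\R^n)$. For the first one, the $L^p$-boundedness of the Riesz transform for $p\in(1,+\infty)$ combined with the strengthened Kenig--Ponce--Vega inequality (the $\BMO$ version from~\cite{LS20}*{Theorem~7.1}) yields
\begin{equation*}
\|R(H_\alpha(f,g))\|_{L^p(\R^n;\,\R^n)}
\le
c_{n,\alpha,p}\bigl(\|(-\Delta)^{\frac\alpha2}f\|_{L^p(\R^n)}\,[g]_{\BMO(\R^n)}
+[f]_{\BMO(\R^n)}\,\|(-\Delta)^{\frac\alpha2}g\|_{L^p(\R^n)}\bigr).
\end{equation*}
For the two commutator terms, I would invoke the $\BMO$ version of the Coifman--Rochberg--Weiss inequality (see~\cite{CRW76}*{Theorem~I} or~\cite{LS20}*{Theorem~4.1}) to get
\begin{equation*}
\|[R,g](-\Delta)^{\frac\alpha2}f\|_{L^p(\R^n;\,\R^n)}
\le
c_{n,p}\,[g]_{\BMO(\R^n)}\,\|(-\Delta)^{\frac\alpha2}f\|_{L^p(\R^n)},
\end{equation*}
and symmetrically for $[R,f](-\Delta)^{\frac\alpha2}g$.

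Finally, I would convert the $\|(-\Delta)^{\frac\alpha2}\cdot\|_{L^p}$ quantities into $\|\nabla^\alpha\cdot\|_{L^p}$. The forward direction $\|\nabla^\alpha u\|_{L^p}\le c_{n,p}\|(-\Delta)^{\frac\alpha2}u\|_{L^p}$ follows from $\nabla^\alpha=R(-\Delta)^{\frac\alpha2}$ and the $L^p$-continuity of $R$; the reverse direction follows from the identity $\sum_{i=1}^n R_i^2=-\mathrm{Id}$ on $L^2(\R^n)$ (and by density on $L^p(\R^n)$), which gives $(-\Delta)^{\frac\alpha2}u=-\sum_{i=1}^n R_i(\nabla^\alpha u)_i$, hence $\|(-\Delta)^{\frac\alpha2}u\|_{L^p}\le c_{n,p}\|\nabla^\alpha u\|_{L^p}$. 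Combining these yields the claimed inequality.

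The mildly delicate point is making sure the algebraic identity and each bound are applied within function classes where every object involved is meaningfully defined; since $f,g\in C^\infty_c(\R^n)$, all fractional Laplacians, Riesz transforms, commutators, and the non-local gradient are classical, pointwise well-defined objects belonging to $L^p$ for every $p\in(1,+\infty)$ (by, e.g., \cref{res:nabla_alpha_div_alpha_in_L_p} and standard Calder\'on--Zygmund theory), so no approximation is required and the argument is effectively a chain of applications of the quoted theorems.
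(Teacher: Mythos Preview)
Your proposal is correct and follows essentially the same route as the paper: the same decomposition into $R(H_\alpha(f,g))$ plus the two Riesz commutators, then the Kenig--Ponce--Vega and Coifman--Rochberg--Weiss estimates (both in their $\BMO$ form), and finally the equivalence $\|(-\Delta)^{\frac\alpha2}u\|_{L^p}\simeq\|\nabla^\alpha u\|_{L^p}$. The only cosmetic difference is that the paper quotes the asymmetric Kenig--Ponce--Vega bound $\|H_\alpha(f,g)\|_{L^p}\le c\,\|(-\Delta)^{\frac\alpha2}f\|_{L^p}\,[g]_{\BMO}$ (which suffices since $H_\alpha$ is symmetric in $f,g$), and it applies the norm equivalence without spelling out the $\sum_i R_i^2=-\mathrm{Id}$ argument that you make explicit.
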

    
The proof of \cref{res:nabla_div_NL_bmo_int} is a simple application of two well known integrability results for commutators involving singular integral operators.

The first result we need is the following theorem which dates back to~\cite{CRW76}*{Theorem~I}. 
For a different proof, see~\cite{LS20}*{Theorem~4.1}.  

\begin{theorem}[Coifman--Rochberg--Weiss]
\label{res:crw_estimate}
Let $p\in(1,+\infty)$.
There exists a constant $c_{n,p}>0$, depending on $n$ and $p$ only, such that
\begin{equation*}
\|R(fg)-f Rg\|_{L^p(\R^n;\,\R^n)}
\le 
c_{n,p}
\,
[f]_{\BMO(\R^n)}
\,
\|g\|_{L^p(\R^n)}
\end{equation*}
for all $f,g\in C^\infty_c(\R^n)$.
\end{theorem}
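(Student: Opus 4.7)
The plan is to establish the classical Coifman--Rochberg--Weiss estimate via the Fefferman--Stein sharp maximal function, bypassing the original complex interpolation proof of \cite{CRW76}. The overall strategy hinges on the Fefferman--Stein inequality $\|F\|_{L^p(\R^n)}\le c_{n,p}\,\|M^{\#}F\|_{L^p(\R^n)}$ for $p\in(1,+\infty)$, where $M^{\#}F(x)=\sup_{B\ni x}\inf_{c\in\R}\frac{1}{|B|}\int_B|F-c|$, coupled with a pointwise bound relating the sharp maximal function of the commutator to ordinary maximal functions of $Rg$ and~$g$.

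\textbf{Step 1: pointwise sharp-maximal estimate.} Fix $f\in\BMO(\R^n)\cap C^\infty_c(\R^n)$ and $g\in C^\infty_c(\R^n)$, and pick an auxiliary exponent $s\in(1,p)$. The central claim is
\begin{equation*}
M^{\#}\bigl(R(fg)-f\,Rg\bigr)(x)
\le
c_{n,s}\,[f]_{\BMO(\R^n)}\,
\bigl(
M_s(Rg)(x)+M_s g(x)
\bigr),
\end{equation*}
where $M_s h=(M|h|^s)^{1/s}$ and $M$ is the Hardy--Littlewood maximal operator. To prove this, fix a ball $B=B_\rho(x_0)$ with $x_0\in\R^n$ and $\rho>0$, write $f_B$ for the average of $f$ over~$B$, exploit the identity $R(fg)-f\,Rg=R\bigl((f-f_B)g\bigr)-(f-f_B)Rg$, and split $g=g\chi_{2B}+g\chi_{(2B)^c}$ to obtain the decomposition
\begin{equation*}
R(fg)-f\,Rg
=
R\bigl((f-f_B)g\chi_{2B}\bigr)
+
R\bigl((f-f_B)g\chi_{(2B)^c}\bigr)
-
(f-f_B)\,Rg.
\end{equation*}
The contribution of the last summand is handled by H\"older's inequality with exponents $s$ and $s'$ together with John--Nirenberg, which yields $\bigl(\frac{1}{|B|}\int_B|f-f_B|^{s'}\bigr)^{1/s'}\le c_{n,s}[f]_\BMO$; the first summand is controlled by the $L^s$-boundedness of $R$ applied to the localised product, followed again by H\"older and John--Nirenberg; the second summand is compared to its value at $x_0$ via the standard Calder\'on--Zygmund smoothness $|K(x,y)-K(x_0,y)|\le c\,|x-x_0|\,|x_0-y|^{-n-1}$ for $|x_0-y|\ge 2|x-x_0|$, yielding a geometric sum over dyadic annuli $2^{k+1}B\setminus 2^k B$ that converges thanks to the factor~$2^{-k}$ and absorbs the logarithmic growth of averages of $f-f_B$ across scales.

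\textbf{Step 2: integration.} Raising the pointwise estimate to the $p$-th power and integrating, applying the Fefferman--Stein inequality on the left and the Hardy--Littlewood maximal theorem on $L^{p/s}(\R^n)$ on the right (legitimate since $s<p$), and finally the $L^p$-boundedness of the Riesz transform $R\colon L^p(\R^n)\to L^p(\R^n;\R^n)$, we conclude
\begin{equation*}
\|R(fg)-f\,Rg\|_{L^p(\R^n;\R^n)}
\le
c_{n,p}\,[f]_{\BMO(\R^n)}
\bigl(
\|Rg\|_{L^p(\R^n;\R^n)}+\|g\|_{L^p(\R^n)}
\bigr)
\le
c_{n,p}\,[f]_{\BMO(\R^n)}\,\|g\|_{L^p(\R^n)}.
\end{equation*}
The main obstacle lies in Step~1, specifically in controlling the tail term $R\bigl((f-f_B)g\chi_{(2B)^c}\bigr)$: this is the only place where the smoothness of the Riesz kernel enters, and the factor $|f-f_B|$ grows like a logarithm across dyadic scales by John--Nirenberg, so one must carefully arrange the H\"older exponents and exploit the geometric $2^{-k}$ decay from kernel smoothness to maintain summability while still recovering $[f]_\BMO$ (and not a higher power of it) as the final constant. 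An alternative, genuinely different route is the original proof of \cite{CRW76}, which realises $R(fg)-f\,Rg$ as the derivative at $z=0$ of the analytic family of operators $T_z h=e^{zf}R(e^{-zf}h)$ and combines Cauchy's formula with the fact, ensured by John--Nirenberg, that $e^{zf}$ is a Muckenhoupt $A_p$ weight for real $z$ sufficiently small.
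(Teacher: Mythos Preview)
Your argument is correct: the sharp-maximal-function approach you outline is the standard modern proof of the Coifman--Rochberg--Weiss commutator estimate, and the details you sketch (John--Nirenberg for the local pieces, kernel smoothness and dyadic summation for the tail, then Fefferman--Stein) are all in order.

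However, there is nothing to compare it against. The paper does not prove this theorem at all; it simply states the result and refers the reader to~\cite{CRW76}*{Theorem~I} for the original proof and to~\cite{LS20}*{Theorem~4.1} for an alternative one. Your write-up therefore supplies strictly more than the paper does. Relative to the two references the paper cites, your proof follows neither: it is not the complex-interpolation/analytic-family argument of~\cite{CRW76} (which you yourself mention as an alternative at the end), nor the harmonic-extension method of~\cite{LS20}, but the real-variable route via $M^\#$ that one finds for instance in Torchinsky or Duoandikoetxea. That route has the virtue of being elementary and of generalising immediately to any Calder\'on--Zygmund operator with standard kernel, at the cost of being slightly longer than simply quoting the literature.
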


The second result we need provides two integrability estimates for the operator
\begin{equation*}
H_\alpha(f,g)
=
(-\Delta)^{\frac\alpha2}
(fg)
-
g\,(-\Delta)^{\frac\alpha2}f
-
f\,(-\Delta)^{\frac\alpha2}g
\end{equation*}
defined for all $f,g\in\Lip_c(\R^n)$.
\cref{res:ls_estimate} was originally proved in~\cite{KPV93}*{Appendix~A}.
For a different proof, as well as for an account on the related literature, see~\cite{LS20}*{Theorem~7.1}.

\begin{theorem}[Kenig--Ponce--Vega]
\label{res:ls_estimate}
Let $\alpha\in(0,1)$ and $p\in(1,+\infty)$.
There exists a constant $c_{n,\alpha,p}>0$, depending on $n$, $\alpha$ and $p$ only, such that
\begin{equation*}
\|H_\alpha(f,g)\|_{L^p(\R^n)}
\le
c_{n,\alpha,p}
\,
\|(-\Delta)^{\frac\alpha2}f\|_{L^p(\R^n)}
\,
[g]_{\BMO(\R^n)}
\end{equation*}
for all $f,g\in C^\infty_c(\R^n)$.
\end{theorem}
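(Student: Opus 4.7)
My plan is to prove the Kenig--Ponce--Vega inequality via the Fefferman--Stein sharp maximal function theorem, starting from an explicit pointwise representation of the operator $H_\alpha$. A direct algebraic computation using the singular-integral definition~\eqref{eq:def_frac_Laplacian} of the fractional Laplacian (expanding each of the three terms in $H_\alpha(f,g)$ and cancelling) gives, for $f,g\in C^\infty_c(\R^n)$, the identity
\begin{equation*}
H_\alpha(f,g)(x)
=
\nu_{n,\alpha}\int_{\R^n} \frac{(f(x+h)-f(x))(g(x+h)-g(x))}{|h|^{n+\alpha}}\,dh,
\qquad x\in\R^n,
\end{equation*}
which in particular is invariant under adding a constant to either $f$ or $g$. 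Thanks to \cref{res:D_alpha_D_alpha_NL_int}\eqref{item:D_alpha_NL_int} applied to suitable Besov exponents, $H_\alpha(f,g)$ belongs to $L^{p_0}(\R^n)$ for every $p_0\in(1,+\infty)$, which legitimizes the use of the Fefferman--Stein bound $\|F\|_{L^p}\le C_{n,p}\|M^{\#}F\|_{L^p}$.

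The main reduction is to prove the pointwise sharp-maximal estimate
\begin{equation*}
M^{\#}\bigl(H_\alpha(f,g)\bigr)(x)
\le
c_{n,\alpha,r}\,[g]_{\BMO(\R^n)}\,
M_r\bigl((-\Delta)^{\frac\alpha2}f\bigr)(x),
\qquad x\in\R^n,
\end{equation*}
for some $r\in(1,p)$, where $M_r u = M(|u|^r)^{1/r}$. Combined with Fefferman--Stein and the $L^p$-boundedness of $M_r$ (available since $r<p$), this yields the desired inequality with a constant depending only on $n$, $\alpha$ and~$p$.

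To establish the pointwise sharp-maximal bound, I fix a ball $B=B_R(x_0)$ and exploit the constant-invariance of $H_\alpha$ to write $H_\alpha(f,g)=H_\alpha(f,g-g_B)=H_\alpha(f,g_{\rm in})+H_\alpha(f,g_{\rm out})$, where $g_B=\aint_B g$, $g_{\rm in}=(g-g_B)\chi_{2B}$ and $g_{\rm out}=(g-g_B)\chi_{\R^n\setminus 2B}$. Setting $c_B := H_\alpha(f,g_{\rm out})(x_0)$, I then estimate $\aint_B|H_\alpha(f,g)-c_B|$ by two pieces. The local piece is bounded via H\"older against the John--Nirenberg bound $\|g_{\rm in}\|_{L^{s'}(2B)}\lesssim R^{n/s'}[g]_{\BMO(\R^n)}$ and a localized $L^s$-average of the first-difference kernel in $f$. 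The non-local piece $|H_\alpha(f,g_{\rm out})(x)-c_B|$ is controlled for $x\in B$ by splitting $h$ into annuli $2^kR\le|h|\le 2^{k+1}R$ and combining the geometric-in-$k$ averages $\|g-g_B\|_{L^1(2^kB)}\lesssim |2^kB|\cdot k\,[g]_{\BMO(\R^n)}$ with the radial decay $|h|^{-n-\alpha}$, while the difference between the integrands at $x$ and at $x_0$ produces an additional harmless factor $R/|h|$.

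The main obstacle will be converting the $f$-differences appearing in the integrand into bounds involving $(-\Delta)^{\frac\alpha2}f$ rather than a Besov norm of~$f$, which is essential in order to match the right-hand side of the statement. The standard device is to invert the fractional Laplacian through the Riesz potential $I_\alpha$, writing $f = c_{n,\alpha}\, I_\alpha((-\Delta)^{\frac\alpha2}f)$, and then to express $f(x+h)-f(x)$ as the convolution of $(-\Delta)^{\frac\alpha2}f$ against $I_\alpha(\cdot+h)-I_\alpha(\cdot)$. A careful estimate of this difference kernel, distinguishing the regimes $|h|\le R$ and $|h|>R$ and exploiting the scaling of the Riesz kernel, produces the $M_r((-\Delta)^{\frac\alpha2}f)$ majorant with a constant independent of the ball $B$. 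This analysis is the technical heart of the argument, carried out via paraproducts and bilinear multiplier bounds in~\cite{KPV93}*{Appendix~A} and via sparse domination in~\cite{LS20}*{Theorem~7.1}.
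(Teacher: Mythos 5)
The paper does not actually prove \cref{res:ls_estimate}: it is imported as a known result, attributed to \cite{KPV93}*{Appendix~A} for the original estimate and to \cite{LS20}*{Theorem~7.1} for the $\BMO$-strengthened form stated here. So there is no internal argument to compare against, and your proposal must stand as a self-contained proof. Its setup is sound: the pointwise identity $H_\alpha(f,g)(x)=\nu_{n,\alpha}\int_{\R^n}(f(x+h)-f(x))(g(x+h)-g(x))\,|h|^{-n-\alpha}\,dh$ is correct, the a priori integrability needed to invoke Fefferman--Stein does follow from \cref{res:D_alpha_D_alpha_NL_int}, and the sharp-maximal scheme you outline (subtract $g_B$, split $g$ into a part supported in $2B$ and a far part, John--Nirenberg for the first, the extra decay $R/|h|$ coming from oscillation for the second) is the standard real-variable route to commutator estimates of this type, genuinely different from the paraproduct/multiplier argument of \cite{KPV93} and the harmonic-extension argument of \cite{LS20}.

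The gap sits exactly where you place the ``technical heart''. The only point at which the hypothesis $\|(-\Delta)^{\frac\alpha2}f\|_{L^p(\R^n)}$ can enter is the conversion of the differences $f(x+h)-f(x)$ into maximal functions of $F=(-\Delta)^{\frac\alpha2}f$ through the Riesz potential; this step is described but never carried out, and your closing sentence delegates it to \cite{KPV93} and \cite{LS20}, i.e.\ to the very theorems being proved (with inaccurate attributions, incidentally: \cite{LS20} works via harmonic extensions, not sparse domination). As written, the proposal is therefore an outline rather than a proof. If you do execute it, note that $f=c_{n,\alpha}I_\alpha F$ yields $|f(x+h)-f(x)|\le C\,|h|^\alpha\bigl(MF(x)+MF(x+h)\bigr)$ for $\alpha\in(0,1)$, and after averaging over $x\in B$ the far-part and difference estimates produce averages of $MF$ over dilates of $B$; consequently the clean pointwise bound $M^{\#}\bigl(H_\alpha(f,g)\bigr)\le c\,[g]_{\BMO(\R^n)}\,M_rF$ you assert is likely to pick up additional iterated maximal terms such as $M(M_rF)$. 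This is harmless for the $L^p$ conclusion, since all such operators are bounded on $L^p$ for $p>r$, but the pointwise claim should be stated in the form you can actually prove. Finally, the ``local'' piece $H_\alpha(f,g_{\rm in})$ also contains the contribution of $|h|\gtrsim R$, where $g_{\rm in}(x+h)=0$ but $g_{\rm in}(x)\neq0$; it is controlled by the same $|h|^{-n-\alpha}$ tail, but it must be treated explicitly rather than folded silently into the H\"older step.
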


\begin{proof}[Proof of \cref{res:nabla_div_NL_bmo_int}]
Since $\nabla^\alpha=R(-\Delta)^{\frac\alpha2}$ on~$C^\infty_c(\R^n)$, we can write
\begin{equation*}
\begin{split}
\nabla^\alpha(fg)
-
g\,\nabla^\alpha f
-
f\,\nabla^\alpha g
&=
R
\big(
H_\alpha(f,g)
\big)
\\
&\quad+
R
\big(
g(-\Delta)^{\frac\alpha2}f
\big)
-
g
R(-\Delta)^{\frac\alpha2}f
\\
&\quad+
R
\big(
f(-\Delta)^{\frac\alpha2}g
\big)
-
f
R(-\Delta)^{\frac\alpha2}g.
\end{split}
\end{equation*}
On the one side, by \cref{res:ls_estimate} and the continuity properties of the Riesz transform, we have
\begin{equation*}
\begin{split}
\big\|
R
\big(
H_\alpha(f,g)
\big)
\big\|_{L^p(\R^n;\,\R^n)}
&\le
C\,
\|
H_\alpha(f,g)
\|_{L^p(\R^n)}
\\
&\le
C
\,
\|(-\Delta)^{\frac\alpha2}f\|_{L^p(\R^n)}
\,
[g]_{\BMO(\R^n)}
\\
&\le
C
\,
\|\nabla^\alpha f\|_{L^p(\R^n; \R^n)}
\,
[g]_{\BMO(\R^n)}.
\end{split}
\end{equation*} 
Here and in the following, $C>0$ is a constant depending on $n$, $\alpha$ and $p$ only that may change from line to line.
On the other side, by \cref{res:crw_estimate} we can estimate
\begin{align*}
\big\|
R
\big(
g(-\Delta)^{\frac\alpha2}f
\big)
-
g
R(-\Delta)^{\frac\alpha2}f
\big\|_{L^p(\R^n;\,\R^n)}
&\le
C
\,
[g]_{\BMO(\R^n)}
\,
\|(-\Delta)^{\frac\alpha2}f\|_{L^p(\R^n)}
\\
&\le
C
\,
[g]_{\BMO(\R^n)}
\,
\|\nabla^\alpha f\|_{L^p(\R^n;\,\R^n)}
\end{align*} 
and, similarly,
\begin{align*}
\big\|
R
\big(
f(-\Delta)^{\frac\alpha2}g
\big)
-
f
R(-\Delta)^{\frac\alpha2}g
\big\|_{L^p(\R^n;\,\R^n)}
\le
C
\,
[f]_{\BMO(\R^n)}
\,
\|\nabla^\alpha g\|_{L^p(\R^n;\,\R^n)}.
\end{align*} 
The conclusion immediately follows.
\end{proof}

\subsection{Products of two \texorpdfstring{$S^{\alpha,p}\cap L^\infty$}{bounded Sˆ(alpha,p)} functions}

Inspired by the duality relation between the non-local operators proved in \cref{res:div_nabla_NL_ibp}, we can now define the natural weak versions of the non-local fractional gradient and of the non-local fractional divergence.

\begin{definition}[Weak non-local $\alpha$-gradient and $\alpha$-divergence]
\label{def:weak_NL_grad}
Let $\alpha\in(0,1)$ and $p,q\in[1,+\infty]$ be such that $\frac1p+\frac1q\le1$.
Let $f\in L^p(\R^n)$, $g\in L^q(\R^n)$ and $h\in L^q(\R^n; \R^n)$. 
We say that $\nabla^\alpha_{\NL,w}(f,g) \in L^1_{\rm loc}(\R^n;\R^n)$ is a \emph{weak non-local fractional $\alpha$-gradient} of the pair $(f,g)$ if 
\begin{equation*}
\int_{\R^n}
f\,\div^\alpha_{\NL}(g,\varphi)
\,dx
=
\int_{\R^n}
\varphi\cdot \nabla^\alpha_{\NL,w}(f,g) \, dx \text{ for all } \varphi\in C^\infty_c(\R^n;\R^n),
\end{equation*}
and that $\div^\alpha_{\NL,w}(f,h) \in L^1_{\rm loc}(\R^n)$ is a \emph{weak non-local fractional $\alpha$-divergence} of the pair $(f,h)$ if 
\begin{equation*}
\int_{\R^n}
h \cdot \,\nabla^\alpha_{\NL}(f,\psi)
\,dx
=
\int_{\R^n}
\psi \, \div^\alpha_{\NL,w}(f,h) \, dx \text{ for all } \psi \in C^\infty_c(\R^n).
\end{equation*}

\end{definition}

Thanks to \cref{res:nabla_alpha_NL_div_alpha_NL_int}, it is easy to check that \cref{def:weak_NL_grad} is well posed. 
In addition, we can immediately see that, if it exists, then the weak non-local fractional gradient of the pair $(f,g)$ is unique and, thanks to \cref{res:nabla_div_NL_swap}, it must coincide with the weak non-local fractional gradient of the pair $(g,f)$.
Consequently, the resulting functional $\nabla^\alpha_{\NL,w}$ is bilinear and symmetric on its domain of definition. 
Moreover, in virtue of the integration-by-parts formula provided by \cref{res:div_nabla_NL_ibp}, we immediately see that 
\begin{equation*}
\nabla^\alpha_{\NL,w}(f,g)
=
\nabla^\alpha_{\NL}(f,g)
\quad
\text{for all}\
f\in L^p(\R^n)\
\text{and}\
g\in B^\alpha_{q,1}(\R^n).
\end{equation*}
Obviously, analogous considerations can be done also for the weak non-local fractional divergence operator~$\div^\alpha_{\NL,w}$.

Having \cref{def:weak_NL_grad} at disposal, we are now ready to prove our third main result \cref{resi:lebniz_bessel_bounded}. 

\begin{proof}[Proof of \cref{resi:lebniz_bessel_bounded}]
Since $f,g\in S^{\alpha,p}(\R^n)\cap L^\infty(\R^n)$, we can find $(f_k)_{k\in\N},(g_k)_{k\in\N}\subset C^\infty_c(\R^n)$ such that $f_k\to f$ and $g_k\to g$ in $S^{\alpha,p}(\R^n)$ as $k\to+\infty$ and, additionally, $\|f_k\|_{L^\infty(\R^n)}\le\|f\|_{L^\infty(\R^n)}$ and $\|g_k\|_{L^\infty(\R^n)}\le\|g\|_{L^\infty(\R^n)}$ for all $k\in\N$. 
This statement easily follows from the proofs of~\cite{CS19}*{Theorem~3.22} and of~\cite{KS21}*{Theorem~2.7}, so we leave the details to the interested reader.
We can thus write~\eqref{eq:z04} and argue as in the proof of \cref{resi:lebniz_bessel_conjugate}.
We define $u_k=\nabla^\alpha_{\NL}(f_k,g_k)
$ for all $k\in\N$.
By \cref{res:nabla_div_NL_bmo_int}, we know that
\begin{equation}
\label{eq:z05}
\begin{split}
\|u_k\|_{L^p(\R^n;\,\R^n)}
&\le
C
\big(
\|\nabla^\alpha f_k\|_{L^p(\R^n;\,\R^n)}\,[g_k]_{\BMO(\R^n)}
+
[f_k]_{\BMO(\R^n)}\,\|\nabla^\alpha g_k\|_{L^p(\R^n;\,\R^n)}	
\big)
\\
&\le
C
\big(
\|\nabla^\alpha f\|_{L^p(\R^n;\,\R^n)}\,\|g\|_{L^\infty(\R^n)}
+
\|f\|_{L^\infty(\R^n)}\,\|\nabla^\alpha g\|_{L^p(\R^n;\,\R^n)}	
\big)
\end{split}
\end{equation}
for all $k\in\N$ sufficiently large.
Here and in the following, $C>0$ is a constant depending on $n$, $\alpha$ and $p$ only that may change from line to line.
By the known reflexivity properties of $L^p(\R^n;\R^n)$, we can find a subsequence $(u_{k_h})_{h\in\N}$ and a vector-valued function $u\in L^p(\R^n;\R^n)$ such that
\begin{equation}
\label{eq:z01}
\lim_{h\to+\infty}
\int_{\R^n}
\phi\cdot u_{k_h}
\,
dx
=
\int_{\R^n}
\phi\cdot u
\,
dx
\end{equation}
for all $\phi\in C^\infty_c(\R^n;\R^n)$.
We now observe that
\begin{align*}
\int_{\R^n}
\phi\cdot u_k
\,dx
=
\int_{\R^n}
f_k\,\div^\alpha_{\NL}(g_k,\phi)
\,dx
\end{align*} 
for each $k\in\N$, thanks to \cref{res:div_nabla_NL_ibp}.
By \cref{res:nabla_alpha_NL_div_alpha_NL_int} and by~\eqref{eq:div_NL_swap} in \cref{res:nabla_div_NL_swap}, we can also estimate
\begin{align*}
\bigg|
\int_{\R^n}
f_k
&
\,
\div^\alpha_{\NL}(g_k,\phi)
\,dx
-
\int_{\R^n}
f\,\div^\alpha_{\NL}(g,\phi)
\,dx
\,\bigg|
\le
\|f_k-f\|_{L^p(\R^n)}
\,
\|\div^\alpha_{\NL}(g_k,\phi)\|_{L^q(\R^n)}
\\
&\quad+
\bigg|
\int_{\R^n}f\,\div^\alpha_{\NL}(g_k-g,\phi)
\,dx
\,\bigg|
\\
&\le
2
\,
\|f_k-f\|_{L^p(\R^n)}
\,
\|g_k\|_{L^\infty(\R^n)}
\,
[\phi]_{B^\alpha_{q,1}(\R^n;\,\R^n)}
+
\bigg|
\int_{\R^n}(g_k-g)\,\div^\alpha_{\NL}(f,\phi)
\,dx
\,\bigg|
\\
&\le
2
\,
[\phi]_{B^\alpha_{q,1}(\R^n;\,\R^n)}
\big(
\|f_k-f\|_{L^p(\R^n)}
\,
\|g\|_{L^\infty(\R^n)}
+
\|f\|_{L^\infty(\R^n)}
\,
\|g_k-g\|_{L^p(\R^n)}
\big)
\end{align*}
for all $k\in\N$, so that
\begin{equation}
\label{eq:z02}
\lim_{k\to+\infty}
\int_{\R^n}
\phi\cdot u_k
\,dx
=
\lim_{k\to+\infty}
\int_{\R^n}
f_k\,\div^\alpha_{\NL}(g_k,\phi)
\,dx
=
\int_{\R^n}
f\,\div^\alpha_{\NL}(g,\phi)
\,dx
\end{equation}
for all $\phi\in C^\infty_c(\R^n;\R^n)$.
From~\eqref{eq:z01}, \eqref{eq:z02} and again~\eqref{eq:div_NL_swap} in \cref{res:nabla_div_NL_swap}, we hence get that
\begin{equation}
\label{eq:z03}
\lim_{k\to+\infty}
\int_{\R^n}
\phi\cdot u_k
\,dx
=
\int_{\R^n}
\phi\cdot u
\,dx
=
\int_{\R^n}
f\,\div^\alpha_{\NL}(g,\phi)
\,dx
=
\int_{\R^n}
g\,\div^\alpha_{\NL}(f,\phi)
\,dx
\end{equation}  
for all $\phi\in C^\infty_c(\R^n;\R^n)$.
In view of~\eqref{eq:z05} and~\eqref{eq:z03}, we must have that
\begin{equation*}
u=\nabla^\alpha_{\NL,w}(f,g)
\end{equation*} 
as in \cref{def:weak_NL_grad}, with
\begin{equation*}
\|
\nabla^\alpha_{\NL,w}(f,g)
\|_{L^p(\R^n;\,\R^n)}
\le
C
\big(
\|\nabla^\alpha f\|_{L^p(\R^n;\,\R^n)}\,\|g\|_{L^\infty(\R^n)}
+
\|f\|_{L^\infty(\R^n)}\,\|\nabla^\alpha g\|_{L^p(\R^n;\,\R^n)}	
\big).
\end{equation*}
Therefore, we can integrate~\eqref{eq:z04} against an arbitrary $\phi\in C^\infty_c(\R^n;\R^n)$ and pass to the limit as $k\to+\infty$ to get that
\begin{align*}
\int_{\R^n} 
fg\,\div^\alpha\phi
\,
dx
=
\int_{\R^n} 
\phi\cdot
\big(
g\,\nabla^\alpha f
+
f\,\nabla^\alpha g
+
\nabla^\alpha_{\NL,w}(f,g)
\big)
\,
dx
\end{align*}
and the conclusion follows.
\end{proof}

\begin{remark}[The case $p=1$ in \cref{resi:lebniz_bessel_bounded}]
\label{rem:lebniz_S_alpha_1_bounded}
Concerning the validity of \cref{resi:lebniz_bessel_bounded} in the case $p=1$, we can make the following observation.
If $f,g\in S^{\alpha,1}(\R^n)\cap L^\infty(\R^n)$, then
\begin{equation}
\label{eq:leibniz_S_alpha_1_iff}
fg\in S^{\alpha,1}(\R^n)
\iff
\exists\,
\nabla^\alpha_{\NL,w}(f,g)\in L^1(\R^n;\R^n),
\end{equation}
in which case
\begin{equation}
\label{eq:leibniz_S_alpha_1_rule}
\nabla^\alpha(fg)
=
g\,\nabla^\alpha f
+
f\,\nabla^\alpha g
+
\nabla^\alpha_{\NL,w}(f,g)
\quad
\text{in}\
L^1(\R^n;\R^n).
\end{equation}
Indeed, for a given $h\in C^\infty_c(\R^n)$, by \cref{res:BV_alpha_p_leibniz_C_b} we have $gh\in S^{\alpha,1}(\R^n)\cap L^\infty(\R^n)$ with
\begin{equation*}
\nabla^\alpha(gh)
=
h\,\nabla^\alpha g
+
g\,\nabla^\alpha h
+
\nabla^\alpha_{\NL}(g,h)
\quad
\text{in}\
L^1(\R^n;\R^n).
\end{equation*}
Therefore, thanks to \cref{res:gg_S_alpha_1}, we can write
\begin{align*}
\int_{\R^n}fg\,\nabla^\alpha h\,dx
&=
\int_{\R^n}
f\,\nabla^{\alpha}(gh)\,dx
-
\int_{\R^n}
fh\,\nabla^\alpha g
\,dx
-
\int_{\R^n}
f\,\nabla^\alpha_{\NL}(g,h)\,dx
\\
&=
-\int_{\R^n}
gh\,\nabla^\alpha f\,dx
-
\int_{\R^n}
fh\,\nabla^\alpha g
\,dx
-
\int_{\R^n}
f\,\nabla^\alpha_{\NL}(g,h)\,dx
\end{align*}
for all $h\in C^\infty_c(\R^n)$ and the validity of~\eqref{eq:leibniz_S_alpha_1_iff} and~\eqref{eq:leibniz_S_alpha_1_rule} immediately follows.
However, in contrast with what happens for the case $p\in(1,+\infty)$ in \cref{resi:lebniz_bessel_bounded}, we do not know if the implication
\begin{equation*}
f,g\in S^{\alpha,1}(\R^n)\cap L^\infty(\R^n)
\implies
\exists\,\nabla^\alpha_{\NL,w}(f,g)\in L^1(\R^n;\R^n) 
\end{equation*}
is true. 
This is due to the fact that suitable extensions of \cref{res:crw_estimate} and \cref{res:ls_estimate} to the case $p=1$ are not known (and not even expected to be possible, see the discussion at the beginning of~\cite{LS20}*{Section~9}) due to the failure of the $L^1$ boundedness of the Riesz transform. 
For strictly related considerations, see the discussion in~\cite{CDH16}*{Remark~1.14}.
\end{remark}

\section{Application to elliptic fractional boundary-value problems}
\label{sec:application}

This last section is devoted to the application of the theory developed so far to the well-posedness of the boundary-value problem~\eqref{eqi:frac_BVP} for the fractional operator~$L_\alpha$ defined in~\eqref{eqi:diff_op_L_frac}.

\subsection{Product with bounded Besov functions}

We begin with the proof of the preliminary \cref{res:Leibniz_Bessel_bounded_Besov}.  

\begin{proof}[Proof of \cref{res:Leibniz_Bessel_bounded_Besov}]
We divide the proof in two steps.

\smallskip

\textit{Step~1: proof of~\eqref{eq:Leibniz_Bessel_bounded_Besov} for $f\in C^\infty_c(\R^n)$}.
Let $f\in C^\infty_c(\R^n)$ and $\phi\in\Lip_c(\R^n;\R^n)$.
Let $(\rho_\eps)_{\eps>0}$ be a family of standard mollifiers (see~\cite{CS19}*{Section~3.3} for example) and let $(g_\eps)_{\eps>0}\subset b^\alpha_{r,1}(\R^n)\cap \Lip_b(\R^n)$ be given by $g_\eps=g*\rho_\eps$ for all $\eps>0$. 
Since clearly $\Lip_b(\R^n)\subset B^\alpha_{\infty,1}(\R^n)$, by \cref{res:nabla_alpha_div_alpha_leibniz_besov}\eqref{item:div_alpha_leibniz_besov} we can write  
\begin{equation*}
\div^\alpha(g_\eps\phi)
=
g_\eps\,\div^\alpha\phi
+
\phi\cdot\nabla^\alpha g_\eps
+\div^\alpha_{\NL}(g_\eps,\phi)
\quad
\text{in}\ 
L^1(\R^n)
\end{equation*}
for all $\eps>0$.
Observing that 
\begin{equation*}
\|g_\eps\,\div^\alpha\phi\|_{L^{p_r'}(\R^n)}
\le
\|g_\eps\|_{L^\infty(\R^n)}
\,
\|\div^\alpha\phi\|_{L^{p_r'}(\R^n)},
\end{equation*}
\begin{equation*}
\|\phi\cdot\nabla^\alpha g_\eps\|_{L^{p_r'}(\R^n)}
\le
\mu_{n,\alpha}\,[g_\eps]_{B^\alpha_{r,1}(\R^n)}
\,
\|\phi\|_{L^{p'}(\R^n;\,\R^n)}
\end{equation*} 
and 
\begin{equation*}
\|\div^\alpha_{\NL}(g_\eps,\phi)\|_{L^{p_r'}(\R^n)}
\le
2\mu_{n,\alpha}
\,
[g_\eps]_{B^\alpha_{r,1}(\R^n)}
\,
\|\phi\|_{L^{p'}(\R^n;\,\R^n)},
\end{equation*}
in virtue of \cref{res:nabla_alpha_div_alpha_in_L_p} and \cref{res:nabla_alpha_NL_div_alpha_NL_int},
where $p_r=\frac{p r}{r- p}$, $p_r'=\frac{p_r}{p_r-1}$ and $p'=\frac p{p-1}$, we also get that 
\begin{equation*}
\div^\alpha(g_\eps\phi)
=
g_\eps\,\div^\alpha\phi
+
\phi\cdot\nabla^\alpha g_\eps
+\div^\alpha_{\NL}(g_\eps,\phi)
\quad
\text{in}\ 
L^{p_r'}(\R^n).
\end{equation*}
Hence we can write 
\begin{equation*}
\int_{\R^n}fg_\eps\,\div^\alpha\phi\,dx
=
\int_{\R^n}f\,\div^\alpha(g_\eps\phi)\,dx
-
\int_{\R^n}f\phi\cdot\nabla^\alpha g_\eps\,dx
-
\int_{\R^n}f\,\div^\alpha_{\NL}(g_\eps,\phi)\,dx
\end{equation*}
for all $\eps>0$. 
Since $g_\eps\phi\in\Lip_c(\R^n; \R^n)$, we can write
\begin{equation*}
\int_{\R^n}f\,\div^\alpha(g_\eps\phi)\,dx
=
-
\int_{\R^n}g_\eps\phi\cdot\nabla^\alpha f\,dx,
\end{equation*}
so that 
\begin{equation*}
\int_{\R^n}fg_\eps\,\div^\alpha\phi\,dx
=
-
\int_{\R^n}g_\eps\phi\cdot\nabla^\alpha f\,dx
-
\int_{\R^n}f\phi\cdot\nabla^\alpha g_\eps\,dx
-
\int_{\R^n}f\,\div^\alpha_{\NL}(g_\eps,\phi)\,dx
\end{equation*}
for all $\eps>0$.
Now, by the Dominated Convergence Theorem, it is easy to see that 
\begin{equation*}
\lim_{\eps\to0^+}
\int_{\R^n}fg_\eps\,\div^\alpha\phi\,dx
=
\int_{\R^n}fg\,\div^\alpha\phi\,dx
\end{equation*}
and
\begin{equation*}
\lim_{\eps\to0^+}
\int_{\R^n}g_\eps\phi\cdot\nabla^\alpha f\,dx
=
\int_{\R^n}g\phi\cdot\nabla^\alpha f\,dx
\end{equation*}
Moreover, by \cref{res:nabla_div_ibp_Besov}, we have that 
\begin{equation*}
\lim_{\eps\to0^+}
\int_{\R^n}\psi\cdot\nabla^\alpha g_\eps\,dx
=
-
\lim_{\eps\to0^+}\int_{\R^n}g_\eps\,\div^\alpha\psi\,dx
=
-
\int_{\R^n}g\,\div^\alpha\psi\,dx
\end{equation*}
for all $\psi\in\Lip_c(\R^n;\R^n)$.
However, by \cref{res:nabla_alpha_div_alpha_in_L_p} again and~\cite{Leoni17}*{Proposition~17.12},  we also have that 
\begin{equation}
\label{eq:cuscino}
\|\nabla^\alpha g_\eps\|_{L^{r}(\R^n;\,\R^n)}
\le\mu_{n, \alpha}
\,
[g_\eps]_{B^\alpha_{r,1}(\R^n)}
\le 
\mu_{n, \alpha}
\,
[g]_{B^\alpha_{r,1}(\R^n)}
\end{equation}
for all $\eps>0$. 
Therefore, possibly passing to a subsequence (which we do not relabel for simplicity), we have that 
\begin{equation*}
\lim_{\eps\to0^+}
\int_{\R^n}\psi\cdot\nabla^\alpha g_\eps\,dx
=
\int_{\R^n}\psi\cdot u\,dx
\end{equation*}
for all $\psi\in\Lip_c(\R^n;\R^n)$, for some $u\in L^{r}(\R^n;\R^n)$. 
We thus must have that 
\begin{equation*}
\int_{\R^n}\psi\cdot u\,dx
=
-
\int_{\R^n}g\,\div^\alpha\psi\,dx
\end{equation*}
for all $\psi\in\Lip_c(\R^n;\R^n)$, proving that $u=\nabla^\alpha g$ according to~\cite{CS19}*{Definition~3.19}. 
As a consequence, we can write 
\begin{equation*}
\lim_{\eps\to0^+}
\int_{\R^n}f\phi\cdot\nabla^\alpha g_\eps\,dx
=
\int_{\R^n}f\phi\cdot\nabla^\alpha g\,dx.
\end{equation*}
In a similar way, we have
\begin{equation*}
\int_{\R^n}f\,\div^\alpha_{\NL}(g_\eps,\phi)\,dx
=
\int_{\R^n}\phi\cdot\nabla^\alpha_{\NL}(f,g_\eps)\,dx
\end{equation*}
for all $\eps>0$ by \cref{res:div_nabla_NL_ibp} and
\begin{equation*}
\lim_{\eps\to0^+}
\int_{\R^n}f\,\div^\alpha_{\NL}(g_\eps,\phi)\,dx
=
\lim_{\eps\to0^+}
\int_{\R^n}g_\eps\,\div^\alpha_{\NL}(f,\phi)\,dx
=
\int_{\R^n}g\,\div^\alpha_{\NL}(f,\phi)\,dx
\end{equation*}  
by \cref{res:nabla_div_NL_swap}.
Observing that 
\begin{equation}
\label{eq:caramella}
\|\nabla^\alpha_{\NL}(f,g_\eps)\|_{L^p(\R^n;\,\R^n)}
\le
2\mu_{n,\alpha}\,\|f\|_{L^{p_r}(\R^n)}
\,
[g_\eps]_{B^\alpha_{r,1}(\R^n)}
\le
2\mu_{n,\alpha}\,\|f\|_{L^{p_r}(\R^n)}
\,
[g]_{B^\alpha_{r,1}(\R^n)}
\end{equation}
for all $\eps>0$ by \cref{res:nabla_alpha_NL_div_alpha_NL_int} and~\cite{Leoni17}*{Proposition~17.12}, we get that (up to possibly pass to a non-relabelled subsequence)
\begin{equation*}
\lim_{\eps\to0^+}
\int_{\R^n}f\,\div^\alpha_{\NL}(g_\eps,\phi)\,dx
=
\int_{\R^n}\phi\cdot\nabla^\alpha_{\NL,w}(f,g)\,dx
\end{equation*} 
according to \cref{def:weak_NL_grad}.
In conclusion, we find that 
\begin{equation*}
\int_{\R^n}fg\,\div^\alpha\phi\,dx
=
-
\int_{\R^n}g\phi\cdot\nabla^\alpha f\,dx
-
\int_{\R^n}f\phi\cdot\nabla^\alpha g\,dx
-
\int_{\R^n}\phi\cdot\nabla^\alpha_{\NL,w}(f,g)\,dx	
\end{equation*}
whenever $f\in C^\infty_c(\R^n)$, $g\in L^\infty(\R^n)\cap b^\alpha_{r,1}(\R^n)$ and $\phi\in\Lip_c(\R^n;\R^n)$.

\textit{Step~2: proof of~\eqref{eq:Leibniz_Bessel_bounded_Besov} and~\eqref{eq:Leibniz_Bessel_bounded_Besov_est} for $f\in S^{\alpha,p}(\R^n)$}.
Now let $f\in S^{\alpha,p}(\R^n)$.
We can find $(f_k)_{k\in\N}\subset C^\infty_c(\R^n)$ such that $f_k\to f$ in $S^{\alpha,p}(\R^n)$ as $k\to+\infty$.
Given $\phi\in\Lip_c(\R^n;\R^n)$, by Step~1 we can write
\begin{equation}
\label{eq:lenzuolo}
\int_{\R^n}f_kg\,\div^\alpha\phi\,dx
=
-
\int_{\R^n}g\phi\cdot\nabla^\alpha f_k\,dx
-
\int_{\R^n}f_k\phi\cdot\nabla^\alpha g\,dx
-
\int_{\R^n}\phi\cdot\nabla^\alpha_{\NL,w}(f_k,g)\,dx
\end{equation} 
Now, by the fractional Sobolev inequality (see~\cite{SS15}*{Theorem~1.8} and recall the identification $L^{\alpha,p}(\R^n)=S^{\alpha,p}(\R^n)$ proved in~\cites{BCCS20,KS21}), we know that $S^{\alpha,p}(\R^n)\subset L^{p_r}(\R^n)$ with continuous inclusion. More precisely, thanks to the interpolation of $L^p$ spaces, we have
\begin{equation} \label{eq:frac_Sob_interp}
\|f\|_{L^{p_r}(\R^n)}\le 
c_{n,\alpha,p}^{1 - \theta} 
\,
\|f\|_{L^p(\R^n)}^\theta
\,
\|\nabla^\alpha f\|_{L^p(\R^n;\,\R^n)}^{1 - \theta}
\end{equation}
for all $f\in S^{\alpha,p}(\R^n)$, where $c_{n,\alpha,p}>0$ is a constant depending on~$n$, $\alpha$ and~$p$ only, and $\theta \in [0, 1]$ satisfies $\frac{1}{p} - \frac{1}{r} = \frac{\theta}{p} + \frac{1 - \theta}{\frac{np}{n-\alpha p}}$.
Now, since $\div^\alpha_{\NL}(g,\phi)\in L^{p_r'}(\R^n)$ thanks to \cref{res:nabla_alpha_NL_div_alpha_NL_int}, we can write 
\begin{equation*}
\lim_{k\to+\infty}
\int_{\R^n}\phi\cdot\nabla^\alpha_{\NL,w}(f_k,g)\,dx
=
\lim_{k\to+\infty}
\int_{\R^n}f_k\,\div^\alpha_{\NL}(g,\phi)\,dx
=
\int_{\R^n}f\,\div^\alpha_{\NL}(g,\phi)\,dx
\end{equation*}
because $f_k\to f$ in $L^{p_\alpha}(\R^n)$ as $k\to+\infty$.
On the other side, by~\eqref{eq:caramella} in Step~1, the lower semicontinuity of the $L^p$ norm with respect to the weak convergence and \eqref{eq:frac_Sob_interp}, we immediately deduce that 
\begin{equation}
\label{eq:federa}
\begin{split}
\|\nabla^\alpha_{\NL,w}(f_k,g)\|_{L^p(\R^n;\,\R^n)}
&\le \liminf_{\eps \to 0^+} \|\nabla^\alpha_{\NL,w}(f_k,g_\eps)\|_{L^p(\R^n;\,\R^n)} \\
& \le 
2\mu_{n,\alpha}\,\|f_k\|_{L^{p_r}(\R^n)}
\,
[g]_{B^\alpha_{r,1}(\R^n)}
\\
&\le
2\mu_{n,\alpha}
\,
c_{n,\alpha,p}^{1 - \theta} 
\,
\|f_k\|_{L^p(\R^n)}^\theta
\,
\|\nabla^\alpha f_k\|_{L^p(\R^n;\,\R^n)}^{1 - \theta}
\,
[g]_{B^\alpha_{r,1}(\R^n)}
\end{split}
\end{equation}
for all $k\in\N$.
Therefore, up to possibly pass to a subsequence (which we do not relabel for simplicity), we get that
\begin{equation*}
\lim_{k\to+\infty}
\int_{\R^n}\phi\cdot\nabla^\alpha_{\NL,w}(f_k,g)\,dx
=
\int_{\R^n}\phi\cdot u\,dx
\end{equation*} 
for all $\phi\in\Lip_c(\R^n;\R^n)$, for some $u\in L^p(\R^n;\R^n)$ such that 
\begin{equation*}
\int_{\R^n}\phi\cdot u\,dx
=
\int_{\R^n}f\,\div^\alpha_{\NL}(g,\phi)\,dx.
\end{equation*}
We thus get that $u=\nabla^\alpha_{\NL,w}(f,g)$ according to \cref{def:weak_NL_grad} and so~\eqref{eq:Leibniz_Bessel_bounded_Besov} follows by passing to the limit as $k\to+\infty$ in~\eqref{eq:lenzuolo}.
The estimate~\eqref{eq:Leibniz_Bessel_bounded_Besov_est} is a plain consequence of~\eqref{eq:cuscino}, the H\"older and fractional Sobolev inequalities and~\eqref{eq:federa}, while \eqref{eq:Leibniz_Bessel_bounded_Besov_est_bis} is a trivial particular case.
The proof is complete.
\end{proof}

For the sake of completeness, below we state the analogue of \cref{res:Leibniz_Bessel_bounded_Besov} for $BV^{\alpha,p}$ functions.
In this case, the embedding $BV^{\alpha,p}(\R^n)\subset L^{\frac n{n-\alpha}}(\R^n)$ holds for $p\in\left[1,\frac n{n-\alpha}\right)$ and $n\ge2$, see~\cite{CSS21}*{Theorem~3.4}.
In the case $n=1$, we only have $BV^{\alpha,p}(\R)\subset L^q(\R)$ for all $q\in\left[p,\frac 1{1-\alpha}\right)$ whenever $p\in\left[1,\frac 1{1-\alpha}\right)$, see~\cite{CSS21}*{Theorem~3.4} again. 

\begin{theorem}[Leibniz rule for $BV^{\alpha,p}$ with bounded continuous Besov]
\label{res:Leibniz_frac_BV_bounded_Besov}
Let $\alpha\in(0,1)$, $p\in\left[1,\frac n{n-\alpha}\right)$ and $q \in \left (\frac{n}{\alpha}, + \infty \right ]$ be such that $\frac1p+\frac1q=1$. 
Also, let $r \in \left (\frac{n}{\alpha}, q \right]$, including the case $r = \frac{n}{\alpha}$ for $n \ge 2$.
There exists a constant $c_{n,\alpha,p, r}>0$ depending on~$n$, $\alpha$, $p$ and~$r$ only with the following property. 
If $f\in BV^{\alpha,p}(\R^n)$ and $g\in C_b(\R^n)\cap b^\alpha_{r, 1}(\R^n)$, then $fg\in BV^{\alpha,p}(\R^n)$ with
\begin{equation*}
D^\alpha(fg)
=
g\,D^\alpha f
+
f\,\nabla^\alpha g\,\Leb{n}
+
D^\alpha_{\NL}(f,g)
\quad
\text{in}\ \M(\R^n;\R^n)
\end{equation*}
and
\begin{equation*}
\|f\,\nabla^\alpha g\|_{L^1(\R^n;\,\R^n)}
+
|D^\alpha_{\NL}(f,g)|(\R^n)
\le
c_{n,\alpha,p,r}
\, \|f\|_{L^p(\R^n)}^\theta\,
|D^\alpha f|(\R^n)^{1 - \theta}
\, 
[g]_{B^\alpha_{r,1}(\R^n)},
\end{equation*}
where $\theta \in [0, 1]$ satisfies $1 - \frac{1}{r} = \frac{\theta}{p} + \frac{1 - \theta}{\frac{n}{n-\alpha}}$.
\end{theorem}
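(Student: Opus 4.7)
The argument adapts the strategy used for \cref{res:Leibniz_Bessel_bounded_Besov} to the $BV^{\alpha,p}$ setting, with $\nabla^\alpha f$ replaced by the measure $D^\alpha f$ and with the continuity of $g$ taking the place of the $L^\infty$ assumption (so that the product $g\,D^\alpha f$ is directly meaningful as a Radon measure, without precise representatives). The flexibility in choosing $r\in(n/\alpha,q]$, possibly below the conjugate exponent $q$ of $p$, will be compensated by the fractional Sobolev embedding $BV^{\alpha,p}(\R^n)\subset L^{n/(n-\alpha)}(\R^n)$ recalled before the statement (from \cite{CSS21}*{Theorem~3.4}) combined with $L^p$--$L^{n/(n-\alpha)}$ interpolation; this produces exactly the interpolation exponent $\theta$ appearing in the estimate.

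\emph{Mollification and approximate Leibniz identity.} Let $g_\eps=\rho_\eps*g$, so that $g_\eps\in C^\infty\cap\Lip_b(\R^n)$ with $\|g_\eps\|_{L^\infty}\le\|g\|_{L^\infty}$, $g_\eps\to g$ locally uniformly, $[g_\eps]_{B^\alpha_{r,1}}\le[g]_{B^\alpha_{r,1}}$ and $[g-g_\eps]_{B^\alpha_{r,1}}\to 0$ by~\cite{Leoni17}*{Proposition~17.12}, and $\nabla^\alpha g_\eps=\rho_\eps*\nabla^\alpha g\to\nabla^\alpha g$ in $L^r(\R^n;\R^n)$ by \cref{res:nabla_alpha_div_alpha_in_L_p}. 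For $\phi\in C^\infty_c(\R^n;\R^n)$ the product $g_\eps\phi$ lies in $C^\infty_c\subset B^\alpha_{s,1}$ for every $s\in[1,+\infty]$, so \cref{res:nabla_alpha_div_alpha_leibniz_besov}\eqref{item:div_alpha_leibniz_besov} yields
\begin{equation*}
\div^\alpha(g_\eps\phi)=g_\eps\,\div^\alpha\phi+\phi\cdot\nabla^\alpha g_\eps+\div^\alpha_{\NL}(g_\eps,\phi).
\end{equation*}
Integrating against $f$, inserting the definition of $D^\alpha f$ on the admissible test $g_\eps\phi\in C^\infty_c$ and invoking \cref{res:div_nabla_NL_ibp}, we arrive at
\begin{equation*}
\int_{\R^n}\!fg_\eps\,\div^\alpha\phi\,dx=-\!\int_{\R^n}\!g_\eps\phi\cdot dD^\alpha f-\!\int_{\R^n}\!f\phi\cdot\nabla^\alpha g_\eps\,dx-\!\int_{\R^n}\!\phi\cdot\nabla^\alpha_{\NL}(f,g_\eps)\,dx.
\end{equation*}

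\emph{Limit $\eps\to 0^+$ and extraction of the measure.} The left-hand side converges to $\int fg\,\div^\alpha\phi\,dx$ by dominated convergence. The first term on the right converges to $-\int g\phi\cdot dD^\alpha f$ since $g_\eps\phi\to g\phi$ uniformly (continuity of $g$ and compact support of $\phi$) and $|D^\alpha f|$ is finite. For the second term, the Sobolev embedding and the strict inequality $r'=r/(r-1)\le n/(n-\alpha)$ (equality only in the endpoint $r=n/\alpha$ for $n\ge 2$) give $f\phi\in L^{r'}$ with compact support, and H\"older's inequality together with $\nabla^\alpha g_\eps\to\nabla^\alpha g$ in $L^r$ delivers the limit $-\int f\phi\cdot\nabla^\alpha g\,dx$. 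For the third term the swapping identity \cref{res:nabla_div_NL_swap} rewrites it as $\int g_\eps\,\div^\alpha_{\NL}(f,\phi)\,dx$; since $\div^\alpha_{\NL}(f,\phi)\in L^1(\R^n)$ by \cref{res:nabla_alpha_NL_div_alpha_NL_int} (as $f\in L^{r'}$ and $\phi\in C^\infty_c\subset b^\alpha_{r,1}$) and $g_\eps\to g$ pointwise with $\|g_\eps\|_{L^\infty}\le\|g\|_{L^\infty}$, dominated convergence gives the limit $\int g\,\div^\alpha_{\NL}(f,\phi)\,dx$. The uniform bound
\begin{equation*}
\|\nabla^\alpha_{\NL}(f,g_\eps)\|_{L^1(\R^n;\R^n)}\le 2\mu_{n,\alpha}\,\|f\|_{L^{r'}(\R^n)}\,[g]_{B^\alpha_{r,1}(\R^n)}
\end{equation*}
furnished by \cref{res:nabla_alpha_NL_div_alpha_NL_int}, combined with Banach--Alaoglu, yields a weak-$*$ subsequential limit $D^\alpha_{\NL}(f,g)\in\M(\R^n;\R^n)$ characterized by $\int\phi\cdot dD^\alpha_{\NL}(f,g)=\int g\,\div^\alpha_{\NL}(f,\phi)\,dx$; uniqueness of the characterization promotes the subsequence to the full limit. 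Interpolating $\|f\|_{L^{r'}}\le\|f\|_{L^p}^\theta\|f\|_{L^{n/(n-\alpha)}}^{1-\theta}$ and using the Sobolev inequality $\|f\|_{L^{n/(n-\alpha)}}\le c\,|D^\alpha f|(\R^n)$ gives the announced estimate, the bound on $|D^\alpha_{\NL}(f,g)|(\R^n)$ following from lower semicontinuity of the total variation and the bound on $\|f\,\nabla^\alpha g\|_{L^1}$ from the pointwise product estimate.

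\emph{Main difficulty.} The subtle point is not the algebra of the limiting Leibniz identity but rather the identification of the weak-$*$ limit of $\nabla^\alpha_{\NL}(f,g_\eps)\Leb{n}$ as a genuine Radon measure satisfying the sharp estimate with exponent $\theta$. This requires the interpolation step, which in turn depends on using the Sobolev inequality in the bare form $\|f\|_{L^{n/(n-\alpha)}}\le c\,|D^\alpha f|(\R^n)$ rather than the weaker $\|f\|_{L^{n/(n-\alpha)}}\le c\,\|f\|_{BV^{\alpha,p}}$. Moreover, the continuity hypothesis on $g$ is essential, as it bypasses the delicate issue of well-posedness of the product $g^\star\,D^\alpha f$ (which in the absence of continuity would force an argument along the lines of \cref{res:BV_alpha_p_leibniz_Sob_embedding}).
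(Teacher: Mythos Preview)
Your proposal is correct and follows essentially the approach the paper has in mind (the paper omits the proof, saying it is ``very similar to the one of \cref{res:Leibniz_Bessel_bounded_Besov}''). Your adaptation to the $BV^{\alpha,p}$ setting is in fact slightly cleaner than a direct transcription of that proof: since $g_\eps\phi\in C^\infty_c(\R^n;\R^n)$ is already an admissible test field for $D^\alpha f$, you avoid the two-step structure of \cref{res:Leibniz_Bessel_bounded_Besov} (first $f\in C^\infty_c$, then approximation of $f$) and work directly with $f\in BV^{\alpha,p}(\R^n)$, mollifying only~$g$; this mirrors the strategy of \cref{res:BV_alpha_p_leibniz_C_b} rather than that of \cref{res:Leibniz_Bessel_bounded_Besov}, but the substance---Leibniz rule for $\div^\alpha(g_\eps\phi)$, passage to the limit via \cref{res:nabla_div_NL_swap}, and the interpolation between $L^p$ and the Sobolev endpoint $L^{n/(n-\alpha)}$ to obtain the exponent~$\theta$---is the same.
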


The proof of \cref{res:Leibniz_frac_BV_bounded_Besov} is very similar to the one of \cref{res:Leibniz_Bessel_bounded_Besov} presented above and we thus omit it.
Nonetheless, we would like to remark that, in the above statement, the assumption $g\in C_b(\R^n)\cap b^\alpha_{r,1}(\R^n)$, for $r\in\left(\frac n\alpha,q\right]$, including the case $r=\frac n\alpha$ for $n\ge2$, can be actually weakened.

In the case $r\in\left(\frac n\alpha,q\right]$, it is enough to assume that $g\in L^\infty(\R^n)\cap b^\alpha_{\frac n\alpha,1}(\R^n)$, the continuity of~$g$ being a consequence of the Morrey inequality in Besov spaces, see~\cite{Leoni17}*{Theorem~17.52}.  

In the case $r=\frac n\alpha$, for $n\ge2$, we can also just assume that $g\in L^\infty(\R^n)\cap b^\alpha_{\frac n\alpha,1}(\R^n)$, but the function~$g$ may not be continuous, so that $g\,D^\alpha f$ has to be replaced with $g^\star D^\alpha f$.   

Note that, under these assumptions, the measure $g^\star D^\alpha f$ is well defined.
Indeed, on the one side, since $p<\frac n{n-\alpha}$, we have that $|D^\alpha f|\ll\Haus{n-1}$, thanks to~\cite{CSS21}*{Theorem~1.1}.
On the other side, by the known theory on Bessel functions and \emph{fractional capacities} (see~\cite{CSS21}*{Section~5} for an account), if $g\in L^\infty(\R^n)\cap b^\alpha_{\frac n\alpha,1}(\R^n)$, then the precise representative~$g^\star$ is well defined~$\Haus{\eps}$-a.e., for any given $\eps>0$.
Indeed, it is easily verified that $g\eta\in S^{\alpha,\frac n\alpha}(\R^n)$ for any cut-off function $\eta\in C^\infty_c(\R^n)$ and, clearly, $(g\eta)^\star(x)=g^\star(x)$ provided that $\eta=1$ in an open neighborhood of the given point $x\in\R^n$.

\subsection{Well-posedness of the fractional boundary-value problem}

We are now ready to deal with the fractional operator~$L_\alpha$ introduced in~\eqref{eqi:diff_op_L_frac}.
We prove the \emph{energy estimates} for the associated the bilinear form~$\mathsf B_\alpha$ defined in~\eqref{eqi:bil_form_B}.

\begin{proof}[Proof of \cref{res:energy_est}]
By \cref{res:Leibniz_Bessel_bounded_Besov}, and, more specifically, estimate \eqref{eq:Leibniz_Bessel_bounded_Besov_est_bis}, we have that $c_1v\in S^{\alpha,2}(\R^n)$ for all $v\in S^{\alpha,2}(\R^n)$, with
\begin{equation}
\label{eq:pane}
\nabla^\alpha(c_1v)=
c_1\,\nabla^\alpha v
+
v\,\nabla^\alpha c_1
+
\nabla^\alpha_{\NL,w}(c_1,v)
\quad
\text{in}\ L^2(\R^n;\R^n)
\end{equation}
and
\begin{equation}
\label{eq:burro}
\|v\,\nabla^\alpha c_1\|_{L^2(\R^n;\,\R^n)}
+
\|\nabla^\alpha_{\NL,w}(c_1,v)\|_{L^2(\R^n;\,\R^n)}
\le
c_{n,\alpha}
\,
[c_1]_{B^\alpha_{\frac n\alpha,1}(\R^n)}
\,
\|\nabla^\alpha v\|_{L^2(\R^n;\,\R^n)},
\end{equation}
where $c_{n,\alpha}>0$ is a constant depending on~$n$ and~$\alpha$ only.
Similarly, again by \cref{res:Leibniz_Bessel_bounded_Besov} and by noticing that 
\begin{equation*}
\div^\alpha_{\NL,w}(v,b_3)
=
\sum_{j=1}^n\nabla^\alpha_{\NL,w}(v,b_3\cdot\mathrm{e}_j)
\cdot\mathrm{e}_j
\end{equation*}
in virtue of \cref{def:weak_NL_grad} (where $\mathrm{e}_1,\dots,\mathrm{e}_n$ is the canonical basis of $\R^n$), we have that $b_3 v\in S^{\alpha,2}(\R^n;\R^n)$ for all $v\in S^{\alpha,2}(\R^n)$, with
$\div^\alpha_{\NL,w}(v,b_3)\in L^2(\R^n)$ and 
\begin{equation}
\label{eq:marmellata}
\|\div^\alpha_{\NL,w}(v,b_3)\|_{L^2(\R^n)}
\le
c_{n,\alpha}
\,
[b_3]_{B^\alpha_{\frac n\alpha,1}(\R^n;\,\R^n)}
\,
\|\nabla^\alpha v\|_{L^2(\R^n;\,\R^n)}.
\end{equation}
We now prove the two estimates~\eqref{eq:energy_est_cont} and~\eqref{eq:energy_est_coercive} separately.

\smallskip

\textit{Proof of~\eqref{eq:energy_est_cont}}.
We clearly have that
\begin{align*}
\bigg|
\int_{\R^n}\nabla^\alpha v\cdot A\nabla^\alpha u\,dx
\,\bigg|
\le
\|A\|_{L^\infty(\R^n;\,\R^{n^2})}
\|\nabla^\alpha u\|_{L^2(\R^n;\,\R^n)}
\|\nabla^\alpha v\|_{L^2(\R^n;\,\R^n)}.
\end{align*}
Thanks to~\eqref{eq:pane} and~\eqref{eq:burro} above, we can estimate
\begin{align*}
\bigg|
\int_{\R^n}ub_1&\cdot\nabla^\alpha(c_1v)
\,dx
\,\bigg|
\\
&\le
\|b_1\|_{L^\infty(\R^n;\,\R^n)}
\big(
\|c_1\|_{L^\infty(\R^n)}
+c_{n,\alpha}[c_1]_{B^\alpha_{\frac n\alpha,1}(\R^n)}
\big)
\|u\|_{L^2(\R^n)}
\|\nabla^\alpha v\|_{L^2(\R^n;\,\R^n)}.
\end{align*}
Analogously, thanks to~\eqref{eq:marmellata} above, we can estimate
\begin{align*}
\bigg|
\int_{\R^n}c_3v\,\div^\alpha_{\NL,w}(u,b_3)\,dx
\,\bigg|
\le
\|c_3\|_{L^\infty(\R^n)}
[b_3]_{B^\alpha_{\frac n\alpha}(\R^n;\,\R^n)}
\|\nabla^\alpha u\|_{L^2(\R^n;\,\R^n)}
\|v\|_{L^2(\R^n)}.
\end{align*}
For the remaining terms, we easily see that 
\begin{align*}
\bigg|
\int_{\R^n}vb_2\cdot\nabla^\alpha u
\,dx
\,\bigg|
\le
\|b_2\|_{L^\infty(\R^n;\,\R^n)}
\|\nabla^\alpha u\|_{L^2(\R^n;\,\R^n)}
\|v\|_{L^2(\R^n)}
\end{align*}
and
\begin{align*}
\bigg|
\int_{\R^n}c_0uv\,dx
\bigg|
\le
\|c_0\|_{L^\infty(\R^n)}
\|u\|_{L^2(\R^n)}
\|v\|_{L^2(\R^n)},
\end{align*}
so that~\eqref{eq:energy_est_cont} readily follows by combining all the above estimates together.

\smallskip

\textit{Proof of~\eqref{eq:energy_est_coercive}}.
In virtue of~\eqref{eq:matrix_coercive}, we clearly have that  
\begin{align*}
\int_{\R^n}\nabla^\alpha u\cdot A\nabla^\alpha u\,dx
\ge
\theta
\int_{\R^n}|\nabla^\alpha u|^2\,dx.
\end{align*}
Again by~\eqref{eq:pane} and~\eqref{eq:burro}, we can also estimate
\begin{align*}
\bigg|
\int_{\R^n}ub_1&\cdot\nabla^\alpha(c_1u)
\,dx
\,\bigg|
=
\bigg|\int_{\R^n}c_1ub_1\cdot\nabla^\alpha u
\,dx
\bigg|
+
\,\bigg|
\int_{\R^n}u^2b_1\cdot\nabla^\alpha c_1
\,dx
\bigg|
\\
&\hspace*{2.85cm}
+
\bigg|
\int_{\R^n}ub_1\cdot\nabla_{\NL,w}^\alpha(c_1,u)
\,dx
\,\bigg|
\\
&\le
\|b_1\|_{L^\infty(\R^n;\,\R^n)}
\|u\|_{L^2(\R^n)}
\big(
\|c_1\|_{L^\infty(\R^n)}
\|\nabla^\alpha u\|_{L^2(\R^n;\,\R^n)}
+
\|u\,\nabla^\alpha c_1\|_{L^2(\R^n;\,\R^n)}
\\
&\quad
+
\|
\nabla^\alpha_{\NL,w}(c_1,u)\|_{L^2(\R^n;\,\R^n)}
\big)
\\
&\le
\|b_1\|_{L^\infty(\R^n;\,\R^n)}
\big(
\|c_1\|_{L^\infty(\R^n)}
+
c_{n,\alpha}[c_1]_{B^\alpha_{\frac n\alpha,1}(\R^n)}
\big)
\|u\|_{L^2(\R^n)}
\|\nabla^\alpha u\|_{L^2(\R^n;\,\R^n)}.
\end{align*}
Similarly, again by~\eqref{eq:marmellata}, we can estimate
\begin{align*}
\bigg|
\int_{\R^n}c_3u\,\div^\alpha_{\NL,w}(u,b_3)\,dx
\,\bigg|
\le
\|c_3\|_{L^\infty(\R^n)}
[b_3]_{B^\alpha_{\frac n\alpha}(\R^n;\,\R^n)}
\|u\|_{L^2(\R^n)}
\|\nabla^\alpha u\|_{L^2(\R^n;\,\R^n)}.
\end{align*}
For the remaining terms, we easily see that 
\begin{align*}
\bigg|
\int_{\R^n}ub_2\cdot\nabla^\alpha u
\,dx
\,\bigg|
\le
\|b_2\|_{L^\infty(\R^n;\,\R^n)}
\|u\|_{L^2(\R^n)}
\|\nabla^\alpha u\|_{L^2(\R^n;\,\R^n)}
\end{align*}
and
\begin{align*}
\bigg|
\int_{\R^n}c_0u^2\,dx
\bigg|
\le
\|c_0\|_{L^\infty(\R^n)}
\|u\|_{L^2(\R^n)}^2.
\end{align*}
Now we observe that
\begin{equation*}
\|u\|_{L^2(\R^n)}
\|\nabla^\alpha u\|_{L^2(\R^n;\,\R^n)}
\le
C_\eps\|u\|_{L^2(\R^n)}^2
+
\eps\|\nabla^\alpha u\|_{L^2(\R^n;\,\R^n)}^2
\end{equation*}
for all $\eps>0$, where $C_\eps>0$ is a constant depending on~$\eps$ only.
Hence we may choose $\eps>0$ sufficiently small and achieve~\eqref{eq:energy_est_coercive} by combining all the above estimates together.
\end{proof}

We are thus left to establish the well-posedness of the fractional boundary-value problem~\eqref{eqi:frac_BVP}.
The proof is a simple consequence of the Lax--Milgram Theorem.

\begin{proof}[Proof of \cref{res:boundary_problem}]
Let $\tilde f\in L^2(\R^n)$ be the extension-by-zero of $f$ outside the set~$\Omega$.
Since $S^{\alpha,2}_0(\Omega)$ is a closed subspace of $S^{\alpha,2}(\R^n)$, in particular it is a Hilbert space, so that we just need to find $u\in S^{\alpha,2}_0(\Omega)$ such that
\begin{equation*}
\mathsf B_\alpha[u,v]
+\lambda\scalar*{u,v}_{L^2(\R^n)}=
\scalar*{\tilde f, v}_{L^2(\R^n)}
\end{equation*}
for all $v\in S^{\alpha,2}_0(\Omega)$. 
In other terms, we simply have to check the assumptions of the Lax--Milgram Theorem for the bilinear form $\mathsf B_{\alpha,\lambda}\colon S^{\alpha,2}_0(\Omega) \times S^{\alpha,2}_0(\Omega)\to\R$ given by
\begin{equation*}
\mathsf B_{\alpha,\lambda}[u,v]
=
\mathsf B_\alpha[u,v]
+\lambda\scalar*{u,v}_{L^2(\R^n)}
\end{equation*}
for all $u,v\in S^{\alpha,2}_0(\Omega)$.
This follows from the energy estimates~\eqref{eq:energy_est_cont} and~\eqref{eq:energy_est_coercive} and the fractional Poincaré inequality proved in~\cite{SS15}*{Theorem~3.3} (which actually holds under the only assumption that $|\Omega| < +\infty$, see also \cite{BCM20}*{Theorem 2.2}).
The proof is complete. 
\end{proof}


\begin{bibdiv}
\begin{biblist}

\bib{AF03}{book}{
   author={Adams, Robert A.},
   author={Fournier, John J. F.},
   title={Sobolev spaces},
   series={Pure and Applied Mathematics (Amsterdam)},
   volume={140},
   edition={2},
   publisher={Elsevier/Academic Press, Amsterdam},
   date={2003},
}

\bib{AH96}{book}{
   author={Adams, David R.},
   author={Hedberg, Lars Inge},
   title={Function spaces and potential theory},
   series={Grundlehren der Mathematischen Wissenschaften [Fundamental Principles of Mathematical Sciences]},
   volume={314},
   publisher={Springer-Verlag, Berlin},
   date={1996},
}

\bib{AFP00}{book}{
   author={Ambrosio, Luigi},
   author={Fusco, Nicola},
   author={Pallara, Diego},
   title={Functions of bounded variation and free discontinuity problems},
   series={Oxford Mathematical Monographs},
   publisher={The Clarendon Press, Oxford University Press, New York},
   date={2000},
}

\bib{BCM20}{article}{
   author={Bellido, Jos\'{e} C.},
   author={Cueto, Javier},
   author={Mora-Corral, Carlos},
   title={Fractional Piola identity and polyconvexity in fractional spaces},
   journal={Ann. Inst. H. Poincar\'{e} Anal. Non Lin\'{e}aire},
   volume={37},
   date={2020},
   number={4},
   pages={955--981},
}

\bib{BCM21}{article}{
   author={Bellido, Jos\'{e} C.},
   author={Cueto, Javier},
   author={Mora-Corral, Carlos},
   title={$\Gamma $-convergence of polyconvex functionals involving $s$-fractional gradients to their local counterparts},
   journal={Calc. Var. Partial Differential Equations},
   volume={60},
   date={2021},
   number={1},
   pages={Paper No. 7, 29},
}

\bib{BMMN14}{article}{
   author={Bernicot, Fr\'{e}d\'{e}ric},
   author={Maldonado, Diego},
   author={Moen, Kabe},
   author={Naibo, Virginia},
   title={Bilinear Sobolev-Poincar\'{e} inequalities and Leibniz-type rules},
   journal={J. Geom. Anal.},
   volume={24},
   date={2014},
   number={2},
   pages={1144--1180},
}

\bib{BL14}{article}{
   author={Bourgain, Jean},
   author={Li, Dong},
   title={On an endpoint Kato-Ponce inequality},
   journal={Differential Integral Equations},
   volume={27},
   date={2014},
   number={11-12},
   pages={1037--1072},
}

\bib{BCCS20}{article}{
  title={A distributional approach to fractional Sobolev spaces and fractional variation: asymptotics II},
  author={Bru{\`e}, Elia},
  author={Calzi, Mattia},
  author={Comi, Giovanni E.},
  author={Stefani, Giorgio},
  eprint={https://arxiv.org/abs/2011.03928},
  year={2020},
  status={to appear on C. R. Math.}
}

\bib{CDH16}{article}{
   author={Chen, Yanping},
   author={Ding, Yong},
   author={Hong, Guixiang},
   title={Commutators with fractional differentiation and new
   characterizations of BMO-Sobolev spaces},
   journal={Anal. PDE},
   volume={9},
   date={2016},
   number={6},
   pages={1497--1522},
}

\bib{CW91}{article}{
   author={Christ, F. M.},
   author={Weinstein, M. I.},
   title={Dispersion of small amplitude solutions of the generalized
   Korteweg-de Vries equation},
   journal={J. Funct. Anal.},
   volume={100},
   date={1991},
   number={1},
   pages={87--109},
}

\bib{CRW76}{article}{
   author={Coifman, R. R.},
   author={Rochberg, R.},
   author={Weiss, Guido},
   title={Factorization theorems for Hardy spaces in several variables},
   journal={Ann. of Math. (2)},
   volume={103},
   date={1976},
   number={3},
   pages={611--635},
}

\bib{CSS21}{article}{
   author={Comi, Giovanni E.},
   author={Spector, Daniel},
   author={Stefani, Giorgio},
   title={The fractional variation and the precise representative of $BV^{\alpha,p}$ functions},
   date={2022},
   journal={Fract. Calc. Appl. Anal.},
}

\bib{CS19}{article}{
   author={Comi, Giovanni E.},
   author={Stefani, Giorgio},
   title={A distributional approach to fractional Sobolev spaces and
   fractional variation: Existence of blow-up},
   journal={J. Funct. Anal.},
   volume={277},
   date={2019},
   number={10},
   pages={3373--3435},
}

\bib{CS19-2}{article}{
   author={Comi, Giovanni E.},
   author={Stefani, Giorgio},
   title={A distributional approach to fractional Sobolev spaces and fractional variation: Asymptotics~I},
   date={2019},
   status={to appear on Rev. Mat. Complut.},
   eprint={https://arxiv.org/abs/1910.13419},
}

\bib{d2019short}{article}{
   author={D'Ancona, Piero},
   title={A short proof of commutator estimates},
   journal={J. Fourier Anal. Appl.},
   volume={25},
   date={2019},
   number={3},
   pages={1134--1146},
}

\bib{d2020correction}{article}{
   author={D'Ancona, Piero},
   title={Correction to: A short proof of commutator estimates},
   journal={J. Fourier Anal. Appl.},
   volume={26},
   date={2020},
   number={2},
   pages={Paper No. 23, 2},
}

\bib{DiNPV12}{article}{
   author={Di Nezza, Eleonora},
   author={Palatucci, Giampiero},
   author={Valdinoci, Enrico},
   title={Hitchhiker's guide to the fractional Sobolev spaces},
   journal={Bull. Sci. Math.},
   volume={136},
   date={2012},
   number={5},
   pages={521--573},
}

\bib{E10}{book}{
   author={Evans, Lawrence C.},
   title={Partial differential equations},
   series={Graduate Studies in Mathematics},
   volume={19},
   edition={2},
   publisher={American Mathematical Society, Providence, RI},
   date={2010},
}

\bib{EG15}{book}{
   author={Evans, Lawrence C.},
   author={Gariepy, Ronald F.},
   title={Measure theory and fine properties of functions},
   series={Textbooks in Mathematics},
   edition={Revised edition},
   publisher={CRC Press, Boca Raton, FL},
   date={2015}
}

\bib{G12}{article}{
   author={Grafakos, Loukas},
   title={Multilinear operators in harmonic analysis and partial
   differential equations},
   conference={
      title={Harmonic analysis and nonlinear partial differential equations},
   },
   book={
      series={RIMS K\^{o}ky\^{u}roku Bessatsu, B33},
      publisher={Res. Inst. Math. Sci. (RIMS), Kyoto},
   },
   date={2012},
   pages={11--27},
}

\bib{G14-C}{book}{
   author={Grafakos, Loukas},
   title={Classical Fourier analysis},
   series={Graduate Texts in Mathematics},
   volume={249},
   edition={3},
   publisher={Springer, New York},
   date={2014},
}

\bib{G14-M}{book}{
   author={Grafakos, Loukas},
   title={Modern Fourier analysis},
   series={Graduate Texts in Mathematics},
   volume={250},
   edition={3},
   publisher={Springer, New York},
   date={2014},
}

\bib{GMN14}{article}{
   author={Grafakos, Loukas},
   author={Maldonado, Diego},
   author={Naibo, Virginia},
   title={A remark on an endpoint Kato-Ponce inequality},
   journal={Differential Integral Equations},
   volume={27},
   date={2014},
   number={5-6},
   pages={415--424},
}

\bib{GO14}{article}{
   author={Grafakos, Loukas},
   author={Oh, Seungly},
   title={The Kato-Ponce inequality},
   journal={Comm. Partial Differential Equations},
   volume={39},
   date={2014},
   number={6},
   pages={1128--1157},
}

\bib{GK96}{article}{
   author={Gulisashvili, Archil},
   author={Kon, Mark A.},
   title={Exact smoothing properties of Schr\"{o}dinger semigroups},
   journal={Amer. J. Math.},
   volume={118},
   date={1996},
   number={6},
   pages={1215--1248},
}

\bib{H59}{article}{
   author={Horv\'ath, J.},
   title={On some composition formulas},
   journal={Proc. Amer. Math. Soc.},
   volume={10},
   date={1959},
   pages={433--437},
}

\bib{KP88}{article}{
   author={Kato, Tosio},
   author={Ponce, Gustavo},
   title={Commutator estimates and the Euler and Navier-Stokes equations},
   journal={Comm. Pure Appl. Math.},
   volume={41},
   date={1988},
   number={7},
   pages={891--907},
}

\bib{KPV93}{article}{
   author={Kenig, Carlos E.},
   author={Ponce, Gustavo},
   author={Vega, Luis},
   title={Well-posedness and scattering results for the generalized
   Korteweg-de Vries equation via the contraction principle},
   journal={Comm. Pure Appl. Math.},
   volume={46},
   date={1993},
   number={4},
   pages={527--620},
}

\bib{KS21}{article}{
   author={Kreisbeck, Carolin},
   author={Sch\"{o}nberger, Hidde},
   title={Quasiconvexity in the fractional calculus of variations:
   Characterization of lower semicontinuity and relaxation},
   journal={Nonlinear Anal.},
   volume={215},
   date={2022},
   pages={Paper No. 112625},
}

\bib{LS20}{article}{
   author={Lenzmann, Enno},
   author={Schikorra, Armin},
   title={Sharp commutator estimates via harmonic extensions},
   journal={Nonlinear Anal.},
   volume={193},
   date={2020},
   pages={111375, 37},
}

\bib{Leoni17}{book}{
   author={Leoni, Giovanni},
   title={A first course in Sobolev spaces},
   series={Graduate Studies in Mathematics},
   volume={181},
   edition={2},
   publisher={American Mathematical Society, Providence, RI},
   date={2017},
}

\bib{lo2021class}{article}{
  author={Lo, Catharine W. K.},
  author={Rodrigues, Jos{\'e} Francisco},
  title={On a class of fractional obstacle type problems related to the distributional Riesz derivative},
   status={preprint},
   eprint={https://arxiv.org/abs/2101.06863},
  year={2021}
}


\bib{MS13}{book}{
   author={Muscalu, Camil},
   author={Schlag, Wilhelm},
   title={Classical and multilinear harmonic analysis. Vol. II},
   series={Cambridge Studies in Advanced Mathematics},
   volume={138},
   publisher={Cambridge University Press, Cambridge},
   date={2013},
}

\bib{OW20}{article}{
   author={Oh, Seungly},
   author={Wu, Xinfeng},
   title={On $L^1$ endpoint Kato-Ponce inequality},
   journal={Math. Res. Lett.},
   volume={27},
   date={2020},
   number={4},
   pages={1129--1163},
}

\bib{P16}{book}{
   author={Ponce, Augusto C.},
   title={Elliptic PDEs, measures and capacities},
   series={EMS Tracts in Mathematics},
   volume={23},
   publisher={European Mathematical Society (EMS), Z\"{u}rich},
   date={2016},
}

\bib{PS20}{article}{
   author={Ponce, Augusto C.},
   author={Spector, Daniel},
   title={A boxing inequality for the fractional perimeter},
   journal={Ann. Sc. Norm. Super. Pisa Cl. Sci. (5)},
   volume={20},
   date={2020},
   number={1},
   pages={107--141},
   issn={0391-173X},
}
		
\bib{rodrigues2019nonlocal}{article}{
   author={Rodrigues, Jos\'{e} Francisco},
   author={Santos, Lisa},
   title={On nonlocal variational and quasi-variational inequalities with
   fractional gradient},
   journal={Appl. Math. Optim.},
   volume={80},
   date={2019},
   number={3},
   pages={835--852},
}

\bib{rodrigues2019nonlocal-corr}{article}{
   author={Rodrigues, Jos\'{e} Francisco},
   author={Santos, Lisa},
   title={Correction to: On nonlocal variational and quasi-variational
   inequalities with fractional gradient},
   journal={Appl. Math. Optim.},
   volume={84},
   date={2021},
   number={3},
   pages={3565--3567},
}

\bib{SSS15}{article}{
   author={Schikorra, Armin},
   author={Shieh, Tien-Tsan},
   author={Spector, Daniel},
   title={$L^p$ theory for fractional gradient PDE with $VMO$ coefficients},
   journal={Atti Accad. Naz. Lincei Rend. Lincei Mat. Appl.},
   volume={26},
   date={2015},
   number={4},
   pages={433--443},
}

\bib{SSS18}{article}{
   author={Schikorra, Armin},
   author={Shieh, Tien-Tsan},
   author={Spector, Daniel E.},
   title={Regularity for a fractional $p$-Laplace equation},
   journal={Commun. Contemp. Math.},
   volume={20},
   date={2018},
   number={1},
   pages={1750003, 6},
}

\bib{SSS17}{article}{
   author={Schikorra, Armin},
   author={Spector, Daniel},
   author={Van Schaftingen, Jean},
   title={An $L^1$-type estimate for Riesz potentials},
   journal={Rev. Mat. Iberoam.},
   volume={33},
   date={2017},
   number={1},
   pages={291--303},
}

\bib{SS15}{article}{
   author={Shieh, Tien-Tsan},
   author={Spector, Daniel E.},
   title={On a new class of fractional partial differential equations},
   journal={Adv. Calc. Var.},
   volume={8},
   date={2015},
   number={4},
   pages={321--336},
}

\bib{SS18}{article}{
   author={Shieh, Tien-Tsan},
   author={Spector, Daniel E.},
   title={On a new class of fractional partial differential equations II},
   journal={Adv. Calc. Var.},
   volume={11},
   date={2018},
   number={3},
   pages={289--307},
}

\bib{Sil19}{article}{
   author={\v{S}ilhav\'y, Miroslav},
   title={Fractional vector analysis based on invariance requirements (Critique of coordinate approaches)},
   date={2019},
   journal={M. Continuum Mech. Thermodyn.},
   pages={1--22},
}

\bib{S19}{article}{
   author={Spector, Daniel},
   title={A noninequality for the fractional gradient},
   journal={Port. Math.},
   volume={76},
   date={2019},
   number={2},
   pages={153--168},
}

\bib{S20-An}{article}{
   author={Spector, Daniel},
   title={An optimal Sobolev embedding for $L^1$},
   journal={J. Funct. Anal.},
   volume={279},
   date={2020},
   number={3},
   pages={108559, 26},
}

\bib{S20-New}{article}{
   author={Spector, Daniel},
   title={New directions in harmonic analysis on $L^1$},
   journal={Nonlinear Anal.},
   volume={192},
   date={2020},
   pages={111685, 20},
}

\bib{S70}{book}{
   author={Stein, Elias M.},
   title={Singular integrals and differentiability properties of functions},
   series={Princeton Mathematical Series, No. 30},
   publisher={Princeton University Press, Princeton, N.J.},
   date={1970},
}

\bib{S93}{book}{
   author={Stein, Elias M.},
   title={Harmonic analysis: real-variable methods, orthogonality, and oscillatory integrals},
   series={Princeton Mathematical Series},
   volume={43},
   publisher={Princeton University Press, Princeton, NJ},
   date={1993},
}

\bib{T06}{book}{
   author={Tao, Terence},
   title={Nonlinear dispersive equations. Local and global analysis},
   series={CBMS Regional Conference Series in Mathematics},
   volume={106},
publisher={Published for the Conference Board of the Mathematical
   Sciences, Washington, DC; by the American Mathematical Society,
   Providence, RI},
   date={2006},
   doi={10.1090/cbms/106},
}

\end{biblist}
\end{bibdiv}

\end{document}